\numberwithin{equation}{section}
\DeclareMathOperator*{\esssup}{ess\,sup}
\theoremstyle{plain}
\newtheorem{theorem}{Theorem}[section]
\newtheorem*{theorem*}{Theorem}
\newtheorem{lemma}[theorem]{Lemma}
\newtheorem{corollary}[theorem]{Corollary}
\newtheorem{proposition}[theorem]{Proposition}
\theoremstyle{remark}
\newtheorem*{remark}{Remark}
\theoremstyle{definition}
\newtheorem{definition}[theorem]{Definition}
\title[Stability for the Riemann Problem]{Finite time stability for the Riemann problem with extremal shocks for a large class of hyperbolic systems}
\author[Krupa]{Sam G. Krupa}
\address{Department of Mathematics\\ The University of Texas at Austin\\ Austin, TX 78712\\ USA}
\email{skrupa@math.utexas.edu}
\thanks{This work was partially supported by NSF Grant DMS-1614918.}
\date{May 10th, 2019}                                           
\begin{document}
\keywords{System of conservation laws, compressible Euler equation, Euler system, isentropic solutions, Riemann problem, rarefaction wave, Rankine--Hugoniot discontinuity, shock, stability, uniqueness.}
\subjclass[2010]{Primary 35L65; Secondary  76N15, 35L45, 35A02, 35B35, 35D30, 35L67, 35Q31, 76L05, 35Q35, 76N10}
\begin{abstract}
In this paper on hyperbolic systems of conservation laws in one space dimension, we give a complete picture of stability for all solutions to the Riemann problem which contain only extremal shocks. We study stability of the Riemann problem amongst a large class of solutions. We show stability among the family of solutions with shocks from any family. We assume solutions verify at least one entropy condition.  We have no small data assumptions. The solutions we consider  are bounded and satisfy a strong trace condition weaker than $BV_{\text{loc}}$. We make only mild assumptions on the system. In particular, our work applies to gas dynamics, including the isentropic Euler system and the full Euler system for a polytropic gas. We use the theory of a-contraction (see Kang and Vasseur [{\em Arch. Ration. Mech. Anal.}, 222(1):343--391, 2016]), and introduce new ideas in this direction to allow for two shocks from different shock families to be controlled simultaneously. This paper shows $L^2$ stability for the Riemann problem for all time. Our results compare to Chen, Frid, and Li [{\em Comm. Math. Phys.}, 228(2):201--217, 2002] and Chen and Li [{\em J. Differential Equations}, 202(2):332--353, 2004], which give uniqueness and long-time stability for perturbations of the Riemann problem -- amongst a large class of solutions without smallness assumptions and which are locally $BV$. Although, these results lack global $L^2$ stability.
\end{abstract}
\maketitle
\tableofcontents

\section{Introduction}


We consider the following $n\times n$ system of conservation laws in one space dimension:
\begin{align}
\label{system}
\begin{cases}
\partial_t u + \partial_x f(u)=0,\mbox{ for } x\in\mathbb{R},\mbox{ } t>0,\\
u(x,0)=u^0(x),\mbox{ for } x\in\mathbb{R}.
\end{cases}
\end{align}

For a fixed $T>0$ (including possibly $T=\infty$), the \emph{unknown} is $u\colon\mathbb{R}\times[0,T)\to \mathbb{M}^{n\times 1}$. The function $u^0\colon\mathbb{R}\to\mathbb{M}^{n\times 1}$ is in $L^\infty(\mathbb{R})$ and is the \emph{initial data}. The function $f\colon\mathbb{M}^{n\times 1}\to\mathbb{M}^{n\times 1}$ is the \emph{flux function} for the system. We assume the system \eqref{system} is endowed with a strictly convex entropy $\eta$ and associated entropy flux $q$. Note the system will be hyperbolic on the state space where $\eta$ exists. We assume the functions $f, \eta$, and $q$ are defined on an open convex state space $\mathcal{V}\subset\mathbb{R}^n$. We assume $f,q\in C^2(\mathcal{V})$ and $\eta \in C^2(\mathcal{V})$. By assumption, the entropy $\eta$ and its associated entropy flux $q$ verify the following compatibility relation:
\begin{align}\label{compatibility_relation_eta_system}
\partial_j q =\sum_{i=1}^n \partial_i\eta\partial_j f_i,\hspace{.25in} 1\leq j \leq n.
\end{align}
By convention, the relation \eqref{compatibility_relation_eta_system} is rewritten as 
\begin{align}
\nabla q = \nabla \eta \nabla f,
\end{align}
where $\nabla f$ denotes the matrix $(\partial_j f_i)_{i,j}$.

For $u\in\mathcal{V}$ where $\eta$ exists, the system \eqref{system} is hyperbolic, and the matrix $\nabla f(u)$ is diagonalizable, with eigenvalues 
\begin{align}
\lambda_1(u)\leq \ldots \leq \lambda_n(u),
\end{align}
called \emph{characteristic speeds}.

We consider both bounded  \emph{classical} and bounded \emph{weak} solutions to \eqref{system}. A weak solution $u$ is bounded and measurable and satisfies \eqref{system} in the sense of distributions. I.e., for every Lipschitz continuous test function $\Phi:\mathbb{R}\times[0,T)\to \mathbb{M}^{1\times n}$ with compact support,
\begin{equation}
\begin{aligned}\label{u_solves_equation_integral_formulation_chitchat}
\int\limits_{0}^{T} \int\limits_{-\infty}^{\infty} \Bigg[\partial_t\Phi u + \partial_x\Phi f(u) \Bigg]\,dxdt +\int\limits_{-\infty}^{\infty} \Phi(x,0)u^0(x)\,dx=0.
\end{aligned}
\end{equation}

We only consider solutions $u$ which are entropic for the entropy $\eta$. That is, they satisfy the following entropy condition:
\begin{align}\label{entropy_condition_distributional_system_chitchat}
\partial_t \eta(u)+\partial_x q(u) \leq 0,
\end{align}
in the sense of distributions. I.e., for all positive, Lipschitz continuous test functions $\phi:\mathbb{R}\times[0,T)\to\mathbb{R}$ with compact support:
 \begin{equation}
\begin{aligned}\label{u_entropy_integral_formulation_chitchat}
\int\limits_{0}^{T} \int\limits_{-\infty}^{\infty}\Bigg[\partial_t\phi\big(\eta(u(x,t))\big)+&\partial_x \phi \big(q(u(x,t))\big)\Bigg]\,dxdt+ \int\limits_{-\infty}^{\infty}\phi(x,0)\eta(u^0(x))\,dx\geq0.
\end{aligned}
\end{equation}

For $u_L,u_R\in\mathbb{R}^n$, the function $u:\mathbb{R}\times[0,\infty)\to\mathbb{R}^n$ defined by
\begin{align}\label{shock_solution_system}
u(x,t)\coloneqq
\begin{cases}
u_L &\mbox{ if } x<\sigma t ,\\
u_R &\mbox{ if } x>\sigma t
\end{cases}
\end{align}
is a weak solution to \eqref{system} if and only if  $u_L,u_R$, and $\sigma$ satisfy the Rankine-Hugoniot jump compatibility relation:
\begin{align}\label{RH_jump_condition}
f(u_R)-f(u_L)=\sigma (u_R-u_L),
\end{align}
in which case \eqref{shock_solution_system} is called a \emph{shock} solution.

Moreover, the solution \eqref{shock_solution_system} will be entropic for $\eta$ (according to \eqref{u_entropy_integral_formulation_chitchat})  if and only if,
\begin{align}\label{entropic_shock_condition_system}
q(u_R)-q(u_L)\leq \sigma (\eta(u_R)-\eta(u_L)).
\end{align}
In this case, $(u_L,u_R,\sigma)$ is an \emph{entropic Rankine--Hugoniot discontinuity}.

For a fixed $u_L$, we consider the set of $u_R$ which satisfy \eqref{RH_jump_condition} and \eqref{entropic_shock_condition_system} for some $\sigma$. For a general $n\times n$ strictly hyperbolic system of conservation laws endowed with a strictly convex entropy , we know that locally this set of $u_R$ values is made up of $n$ curves (see for example \cite[p.~140-6]{lefloch_book}). 

The present paper concerns the finite-time stability of Riemann problem solutions to \eqref{system}, working in the $L^2$ setting. We work in a very general setting. Our techniques are based on the theory of shifts in the context of the relative entropy method as developed by Vasseur (see \cite{VASSEUR2008323}). We consider systems of the form \eqref{system}, with minimal assumptions on the shock families. We ask that the extremal shock speeds (1-shock and n-shock speeds) are separated from the intermediate shock families. If we want to consider solutions to the Riemann problem with a 1-shock, we ask that the 1-shock family satisfy the Liu entropy condition (shock speed decreases as the right-hand state travels down the 1-shock curve), and we ask that the shock strength increase in the sense of relative entropy (an $L^2$ requirement) as the right-hand state travels down the 1-shock curve. If we want to consider n-shocks, we ask for similar requirements on the n-shock family.

 The intermediate wave families have far fewer requirements. The intermediate shock curves might not even be well-defined and characteristic speeds might cross.
 
Systems we have in mind include the isentropic Euler system and the  full Euler system for a polytropic gas (both in Eulerian coordinates). 

We study the stability of solutions $\bar{v}$ to the Riemann problem. We study the stability and uniqueness of these solutions among a large class of weak solutions $u$ which are bounded, measurable, entropic for at least one strictly convex entropy, and verify a strong trace condition (weaker than $BV_{\text{loc}}$). We require the solution  $\bar{v}$ contain shocks of only the extremal families (1-shocks and n-shocks), if it contains shocks at all. However, the rougher solutions $u$ which we compare to $\bar{v}$  may have shocks of any type or family.

Previous results in this direction include Chen, Frid, and Li \cite{MR1911734} where for the full Euler system,  they show uniqueness and long-time stability for perturbations of Riemann initial data among a large class of entropy solutions (locally $BV$ and without smallness conditions) for the $3\times3$ Euler system in Lagrangian coordinates. They also show uniqueness for solutions piecewise-Lipschitz in $x$. For an extension to the relativistic Euler equations, see Chen and Li \cite{MR2068444}. However, these papers do not give $L^2$ stability results for all time.

We will occasionally use a strong form of Lax's E-condition, saying we want a shock to be compressive but not overcompressive \cite[p.~359-60]{dafermos_big_book},

\begin{align}
\begin{cases}\label{lax_strong}
\mbox{for a shock with left state $u_L$, right state $u_R$, and shock speed $\sigma$,}
\\
\mbox{there exists $i\in\{1,\ldots,n\}$ such that }\\
\lambda_{i-1}(u_{L/R})<\lambda_i(u_R)\leq \sigma\leq\lambda_i(u_L)<\lambda_{i+1}(u_{L/R}),\\
\mbox{where $u_{L/R}$ denotes $u_L$ or $u_R$ and $\lambda_0\coloneqq -\infty$ and $\lambda_{n+1}\coloneqq\infty$ }.
\end{cases}
\end{align}

The condition \eqref{lax_strong} is a type of separation between the characteristic speeds. In particular, note that for any strictly hyperbolic system of conservation laws, the first and last inequalities in \eqref{lax_strong} will hold whenever $\abs{u_L-u_R}$ is sufficiently small. Furthermore, for hyperbolic systems where the characteristic speeds are completely separated in value, any shock $(u_L,u_R)$ will trivially satisfy \eqref{lax_strong}. For example, for the full Euler system for gas dynamics in Lagrangian coordinates the first characteristic speed is always negative, the middle one is always zero, and the third characteristic speed is always positive.

We will also occasionally consider systems of the form \eqref{system} (endowed with the entropy $\eta$) and verifying the additional  sign condition,
\begin{align}
\begin{cases}\label{RHS_good_sign_rarefaction}
\Bigg(\partial_x \bigg|_{(x,t)}\hspace{-.21in} \bar{v}^T(x,t)\Bigg)\nabla^2\eta(\bar{v}(x,t)) f(u|\bar{v}(x,t))\geq0,\\
\vspace{-.08in}
\\
\mbox{for every rarefaction wave solution $\bar{v}$ of \eqref{system} and for every $u\in\mathbb{R}^n$,}
\end{cases}
\end{align}

where $f(\cdot|\cdot)$ denotes the relative flux,
\begin{align*}
f(a|b)\coloneqq f(a)-f(b)-\nabla f (b)(a-b),
\end{align*}
for $a,b\in\mathbb{M}^{n\times 1}$.

In particular, the system of isentropic gas dynamics verifies the property \eqref{RHS_good_sign_rarefaction}. For a proof of this fact, see \cite{VASSEUR2008323}. The full Euler system also satisfies \eqref{RHS_good_sign_rarefaction} in the case of one space dimension and multiple space dimensions (see \cite{MR3357629} for proof of this  in multiple space dimensions).

Fix $T>0$. For $u_L,u_R\in\mathbb{R}^n$, assume there exists a (potentially weak) solution $\bar{v}\in L^\infty(\mathbb{R}\times[0,T))$ entropic for the entropy $\eta$, with the initial data
\begin{align}\label{Riemann_problem}
\bar{v}(x,0)=
\begin{cases}
u_L & \text{if } x<0\\ 
u_R & \text{if } x>0.
\end{cases}
\end{align}
In other words, $\bar{v}$ solves the Riemann problem \eqref{Riemann_problem}.

Assume $\bar{v}$ has the following standard form for a solution to the Riemann problem, constant on lines through the origin in the x-t plane: 
\begin{align}\label{standard_form_solution_RP}
\begin{cases}
\text{$\bar{v}$ is made up of $n+1$ constant states  $u_L=\bar{v}_1,\ldots,\bar{v}_{n+1}=u_R$, where if $\bar{v}_i\neq\bar{v}_{i+1}$,}
\\
\text{then $\bar{v}_i$ is joined to $\bar{v}_{i+1}$ by either an i-shock or an i-rarefaction fan.}
\\
\text{Otherwise $\bar{v}_i=\bar{v}_{i+1}$ and we do not need a shock or a rarefaction to connect $\bar{v}_i$ to $\bar{v}_{i+1}$.}
\end{cases}
\end{align}

Before we can present our stability and uniqueness results, for a fixed $\bar{v}$ as in \eqref{standard_form_solution_RP}, we define the Property $(\mathcal{D})$. 

We say a function $\Psi_{\bar{v}}\colon\mathbb{R}\times[0,T)\to\mathbb{R}^n$ verifies property  $(\mathcal{D})$ if

\vspace{.15in}

$(\mathcal{D})$:
\begin{itemize}
\item
If $\bar{v}$ contains at least one rarefaction wave, and if there are any shocks in $\bar{v}$ they are either a 1-shock verifying  \eqref{lax_strong} or an n-shock verifying \eqref{lax_strong}, then
\begin{itemize}
\item
If $\bar{v}$ contains a 1-shock verifying \eqref{lax_strong} for $i=1$, but no other shocks, then there exists a Lipschitz continuous function $h_1:[0,T)\to\mathbb{R}$ with $h_1(0)=0$ and verifying
\begin{align}\label{h_verify_1}
h_1(t)<\lambda_2(\bar{v}_2)t
\end{align}
for all $t\in[0,T)$ such that $\Psi_{\bar{v}}:\mathbb{R}\times[0,T)\to\mathbb{R}^n$ verifies,
\begin{align}
\Psi_{\bar{v}}(x,t)\coloneqq
\begin{cases}\label{Psi_def_RP_one_shock_theorem}
\bar{v}_1& \text{if } x<h_1(t)\\
\bar{v}_2& \text{if } h_1(t)<x<\lambda_2(\bar{v}_2)t\\
\bar{v}(x,t)& \text{if } \lambda_2(\bar{v}_2)t<x.
\end{cases}
\end{align}
\item
If $\bar{v}$ contains an n-shock verifying \eqref{lax_strong} for $i=n$, but no other shocks, then there exists a  Lipschitz continuous function $h_n:[0,T)\to\mathbb{R}$ with $h_n(0)=0$, and verifying
\begin{align}\label{h_verify_2}
\lambda_{n-1}(\bar{v}_n)t<h_n(t)
\end{align}
for all $t\in[0,T)$ such that $\Psi_{\bar{v}}:\mathbb{R}\times[0,T)\to\mathbb{R}^n$ verifies,
\begin{align}
\Psi_{\bar{v}}(x,t)\coloneqq
\begin{cases}\label{Psi_def_RP_n_shock_theorem}
\bar{v}(x,t)& \text{if } x<\lambda_{n-1}(\bar{v}_n)t\\
\bar{v}_n& \text{if } \lambda_{n-1}(\bar{v}_n)t<x<h_n(t)\\
\bar{v}_{n+1}& \text{if } h_n(t)<x.
\end{cases}
\end{align}
\item
If $\bar{v}$ contains a 1-shock and an n-shock verifying \eqref{lax_strong} for $i=1$ and $i=n$, respectively, but no other shocks, then there exists Lipschitz continuous functions $h_1,h_n:[0,T)\to\mathbb{R}$ with $h_1(0)=h_n(0)=0$, where $h_1$ verifies \eqref{h_verify_1} and $h_n$ verifies \eqref{h_verify_2} such that $\Psi_{\bar{v}}:\mathbb{R}\times[0,T)\to\mathbb{R}^n$ verifies,
\begin{align}
\Psi_{\bar{v}}(x,t)\coloneqq
\begin{cases}\label{Psi_def_RP_one_shock_n_shock_theorem}
\bar{v}_1& \text{if } x<h_1(t)\\
\bar{v}_2& \text{if } h_1(t)<x<\lambda_2(\bar{v}_2)t\\
\bar{v}(x,t)& \text{if } \lambda_2(\bar{v}_2)t<x<\lambda_{n-1}(\bar{v}_n)t\\
\bar{v}_n& \text{if } \lambda_{n-1}(\bar{v}_n)t<x<h_n(t)\\
\bar{v}_{n+1}& \text{if } h_n(t)<x.
\end{cases}
\end{align}
\item
If $\bar{v}$ contains no shocks, then $\Psi_{\bar{v}}:\mathbb{R}\times[0,T)\to\mathbb{R}^n$ verifies,
\begin{align}
\Psi_{\bar{v}}(x,t)\coloneqq \bar{v}(x,t)
\end{align}
for all $(x,t)\in \mathbb{R}\times[0,T)$.
\end{itemize}
\item If $\bar{v}$ \emph{does not contain any rarefactions}, and if $\bar{v}$ contains any shocks, they are either a 1-shock or an n-shock, then
\begin{itemize}
\item
If $\bar{v}$ contains a 1-shock, but no other shocks, then there exists a Lipschitz continuous function $h_1:[0,T)\to\mathbb{R}$ with $h_1(0)=0$ such that $\Psi_{\bar{v}}:\mathbb{R}\times[0,T)\to\mathbb{R}^n$ verifies,
\begin{align}
\Psi_{\bar{v}}(x,t)\coloneqq
\begin{cases}\label{Psi_def_RP_one_shock_theorem}
\bar{v}_1& \text{if } x<h_1(t)\\
\bar{v}_2& \text{if } h_1(t)<x.
\end{cases}
\end{align}
\item
If $\bar{v}$ contains an n-shock, but no other shocks, then there exists a Lipschitz continuous function $h_n:[0,T)\to\mathbb{R}$ with $h_n(0)=0$ such that $\Psi_{\bar{v}}:\mathbb{R}\times[0,T)\to\mathbb{R}^n$ verifies,
\begin{align}
\Psi_{\bar{v}}(x,t)\coloneqq
\begin{cases}\label{Psi_def_RP_n_shock_theorem}
\bar{v}_n& \text{if }x<h_n(t)\\
\bar{v}_{n+1}& \text{if } h_n(t)<x.
\end{cases}
\end{align}
\item
If $\bar{v}$ contains a 1-shock and an n-shock, but no other shocks, then there exists Lipschitz continuous functions $h_1,h_n:[0,T)\to\mathbb{R}$ with $h_1(0)=h_n(0)=0$, and verifying
\begin{align}\label{h_verify_3}
h_1(t)\leq h_n(t)
\end{align}
for all $t\in[0,T)$ such that $\Psi_{\bar{v}}:\mathbb{R}\times[0,T)\to\mathbb{R}^n$ verifies,
\begin{align}
\Psi_{\bar{v}}(x,t)\coloneqq
\begin{cases}\label{Psi_def_RP_one_shock_n_shock_theorem}
\bar{v}_1& \text{if } x<h_1(t)\\
\bar{v}_2& \text{if } h_1(t)<x<h_n(t)\\
\bar{v}_{n+1}& \text{if } h_n(t)<x.
\end{cases}
\end{align}
\item
If $\bar{v}$ contains no shocks, then $\bar{v}$ must be a constant function and  $\Psi_{\bar{v}}:\mathbb{R}\times[0,T)\to\mathbb{R}^n$ verifies,
\begin{align}
\Psi_{\bar{v}}(x,t)\coloneqq \bar{v}
\end{align}
for all $(x,t)\in \mathbb{R}\times[0,T)$.
\end{itemize}
\end{itemize}

Let $u\in L^\infty(\mathbb{R}\times[0,T))$ be \emph{any weak solution} to \eqref{system}, entropic for the entropy $\eta$ (assume also that $u$ has strong traces (\Cref{strong_trace_definition})). With the definition of Property $(\mathcal{D})$ out of the way, we present our main and most important theorem regarding $L^2$-type stability and uniqueness results between $u$ and $\bar{v}$.  The hypotheses $(\mathcal{H})$ and $(\mathcal{H})^*$ in the theorem  depend only on the system \eqref{system} and the Riemann problem solution $\bar{v}$. The hypotheses are related to conditions on 1-shocks and n-shocks and in particular are satisfied by the isentropic Euler and full Euler systems. They are with small modifications related to the hypotheses in \cite{Leger2011}. These hypotheses are explained in detail in \Cref{hypotheses_on_system}. The theorem gives a general overview of the results in this paper:

\begin{theorem}[Main theorem -- $L^2$ Stability for the Riemann Problem with Extremal Shocks]
\label{RP_theorem_main_theorem}
Fix $T>0$. Assume $u,\bar{v}\in L^\infty(\mathbb{R}\times[0,T))$ are solutions to the system \eqref{system}. Assume that $u$ and $\bar{v}$ are entropic for the entropy $\eta$. Further, assume that $u$ has strong traces (\Cref{strong_trace_definition}). 

Assume also that $\bar{v}$ is a solution to the Riemann problem \eqref{Riemann_problem} and has the form \eqref{standard_form_solution_RP}. If $\bar{v}$ contains a 1-shock, assume the hypotheses $(\mathcal{H})$ hold. Likewise, if $\bar{v}$ contains an n-shock, assume the hypotheses $(\mathcal{H})^*$ hold. 

Assume \eqref{RHS_good_sign_rarefaction} holds. Further, assume the system \eqref{system} has at least two conserved quantities ($n\geq2$).
 
If  $\bar{v}$ \emph{contains at least one rarefaction wave}, assume that if there are any shocks in $\bar{v}$ they are either a 1-shock verifying  \eqref{lax_strong} or an n-shock verifying \eqref{lax_strong}.

If $\bar{v}$ does not contain any rarefactions, and if $\bar{v}$ contains any shocks, assume they are either a 1-shock or an n-shock (and we do not require \eqref{lax_strong}).

\vspace{.15in}

Then there exists a $\Psi_{\bar{v}}$ with Property $(\mathcal{D})$, and verifying the following stability estimate:
\begin{align}\label{RP_lax_strong_stability_theorem_result_Main_theorem}
\int\limits_{-R}^{R}\abs{u(x,t_0)-\Psi_{\bar{v}}(x,t_0)}^2\,dx\leq\mu\int\limits_{-R-rt_0}^{R+rt_0}\abs{u^0(x)-\bar{v}(x,0)}^2\,dx,
\end{align}
for a constant $\mu>0$, and for all $t_0,R>0$ verifying $t_0\in(0,R)$ and 
\begin{align}\label{R_sufficiently_large_RP_theorem_statement}
R>\max_{i}\{\mbox{Lip}[h_i]\}t_0,
\end{align}
where the max runs over the i-shock families contained in $\bar{v}$ (1-shocks and/or n-shocks) and the $h_i$ are in the context of Property $(\mathcal{D})$.

We also have the following $L^2$-type control on the shift functions $h_i$:
\begin{align}\label{control_shifts_two_shifts204}
\int\limits_0^{t_0} \sum_{i}\abs{\sigma^i(\bar{v}_i,\bar{v}_{i+1})-\dot{h}_i(t)}^2\,dt\leq \mu\int\limits_{-R-rt_0}^{R+rt_0}\abs{u^0(x)-\bar{v}(x,0)}^2\,dx,
\end{align}
where the sum runs over the i-shock families contained in $\bar{v}$ (1-shocks and/or n-shocks). 
\end{theorem}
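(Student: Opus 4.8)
The plan is to build $\Psi_{\bar v}$ by patching together the constant states of $\bar v$, the rarefaction fans of $\bar v$, and \emph{shifted} extremal shocks, and then to run the weighted relative entropy ($a$-contraction with shifts) machinery of Kang--Vasseur and Vasseur \cite{VASSEUR2008323} on each piece, exploiting the strong trace hypothesis on $u$ and the finite speed of propagation. First I would treat the template case in which $\bar v$ has exactly a 1-shock and an n-shock plus possibly intermediate rarefactions. The idea is to introduce the shift functions $h_1,h_n$ as solutions of ODEs of the type $\dot h_i = \Sigma_i(u(h_i(t)^{\pm},t),\bar v_i,\bar v_{i+1})$ where $\Sigma_i$ is the velocity functional from the $a$-contraction theory (depending on the one-sided traces of $u$), with $h_i(0)=0$. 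Existence of such Lipschitz $h_i$ follows from the standard theory (as in \cite{Leger2011} with the system hypotheses $(\mathcal H)$, $(\mathcal H)^*$ entering exactly to bound $\mathrm{Lip}[h_i]$ and to give the localized contraction inequality). The order constraints \eqref{h_verify_1}, \eqref{h_verify_2}, \eqref{h_verify_3} — which guarantee the pieces of $\Psi_{\bar v}$ do not collide — come from the a priori bound on $\mathrm{Lip}[h_i]$ together with the strict separation of the extremal shock speeds from $\lambda_2(\bar v_2)$ and $\lambda_{n-1}(\bar v_n)$ built into \eqref{lax_strong}; this is why \eqref{lax_strong} is required when rarefactions are present (so that the shifted shock stays inside the cone where the neighboring wave is $\bar v_2$ or $\bar v_n$) and is not needed in the pure-shock case except for the mild ordering \eqref{h_verify_3}.

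Next I would assemble the global functional. On a truncated cone I would consider
\begin{align*}
E(t) \coloneqq \int_{-R-r(t_0-t)}^{\,R+r(t_0-t)} \eta\big(u(x,t)\,\big|\,\Psi_{\bar v}(x,t)\big)\,dx,
\end{align*}
where $r$ is a speed larger than all relevant characteristic and shock speeds and $a(\cdot)$ is a piecewise-constant weight that jumps across $h_1$ and $h_n$ (and is continuous across the rarefaction region, where \eqref{RHS_good_sign_rarefaction} is used as in Vasseur's treatment of rarefactions). Differentiating $E$ in time, the contributions split: (i) the lateral cone boundary terms are $\le 0$ by the choice of $r$ and the entropy inequality for $u$; (ii) across each shifted extremal shock, the weighted Rankine--Hugoniot/dissipation terms are controlled by the $a$-contraction estimate — here the convexity of $\eta$, the Liu condition on the extremal family, and the "strength increases in relative entropy" hypothesis (contained in $(\mathcal H)$/$(\mathcal H)^*$) give a good sign, together with a dissipation term that is exactly $|\sigma^i(\bar v_i,\bar v_{i+1})-\dot h_i|^2$ up to a constant; (iii) on the rarefaction fans, the term $\partial_x\bar v^{\,T}\nabla^2\eta(\bar v)\,f(u|\bar v)$ appears with the sign guaranteed by \eqref{RHS_good_sign_rarefaction}, hence is harmless; (iv) on the constant states there is no contribution. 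Crucially, because the extremal shocks are separated from the intermediate families, the regions associated with $h_1$ and with $h_n$ do not interact, so the two $a$-contraction arguments can be run simultaneously — this is the new ingredient the paper advertises. Summing, one gets $E'(t)\le -c\sum_i|\sigma^i-\dot h_i|^2 + (\text{boundary} \le 0)$, so $E$ is nonincreasing and the dissipation is integrable.

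Finally I would convert the relative-entropy statement into the $L^2$ statements. Strict convexity of $\eta$ on the (bounded) range of $u,\bar v,\Psi_{\bar v}$ gives two-sided equivalence $c_1|a-b|^2 \le \eta(a|b) \le c_2|a-b|^2$; applying the lower bound at time $t_0$ on $[-R,R]$ (note $R+r(t_0-t_0)=R$) and the upper bound at $t=0$ on $[-R-rt_0,R+rt_0]$, and using $\bar v(x,0)=\Psi_{\bar v}(x,0)$ (since $h_i(0)=0$), yields \eqref{RP_lax_strong_stability_theorem_result_Main_theorem} with $\mu=c_2/c_1$. Integrating $E'(t)\le -c\sum_i|\sigma^i-\dot h_i|^2$ from $0$ to $t_0$ and bounding $E(0)$ by the same right-hand side gives \eqref{control_shifts_two_shifts204}. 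The remaining cases of Property $(\mathcal D)$ — only a 1-shock, only an n-shock, no shocks — are strictly simpler specializations of this argument (drop the irrelevant shift and the corresponding weight jump); the no-shock case reduces to Vasseur's relative entropy estimate for rarefactions and constants. The main obstacle I anticipate is step (ii)–(iii) interface bookkeeping: showing that the weights $a_1,a_n$ can be chosen so that \emph{all} the shock dissipation terms have the right sign \emph{at once} while remaining compatible with the sign condition \eqref{RHS_good_sign_rarefaction} across the rarefactions, and verifying that the Lipschitz bounds on $h_i$ are small enough that the geometric ordering in Property $(\mathcal D)$ genuinely holds for the stated range of $R$ — i.e., that the shifted shocks never leave the cones on which the template $\Psi_{\bar v}$ was defined.
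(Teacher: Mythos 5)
Your overall architecture coincides with the paper's: generalized shift functions attached to the two extremal shocks, a piecewise-constant weight ($1$, $a_1$, $a_1/a_n$ in the paper) on the regions they delimit, the sign condition \eqref{RHS_good_sign_rarefaction} to absorb the rarefaction contribution, and \Cref{entropy_relative_L2_control_system} to convert relative entropy into $L^2$; the paper implements your ``differentiate $E(t)$'' step in integrated form by applying \Cref{local_entropy_dissipation_rate_systems_RP} once per region and taking the weighted linear combination. However, the mechanism you give for the ordering constraints is not the one that works, and this is exactly where the paper's technical content lies. The constraints \eqref{h_verify_1}--\eqref{h_verify_2} do not follow from ``the a priori bound on $\mathrm{Lip}[h_i]$ together with the separation of the extremal shock speeds'': a two-sided Lipschitz bound on $h_1$ says nothing about whether the graph of $h_1$ crosses the line $x=\lambda_2(\bar v_2)t$, and $\dot h_1$ is not pointwise close to $\sigma^1(\bar v_1,\bar v_2)$ --- only its $L^2$ deviation is controlled, a posteriori, by \eqref{control_shifts_two_shifts204}. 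What the paper actually uses is the one-sided structure of the generalized-characteristic shift: for a.e.\ $t$ either $\dot h_1(t)<\inf\lambda_1$ or $\dot h_1(t)\le\lambda_1(u^1_-)$ with $u^1_-\in\{u:\eta(u|\bar v_1)\le a_1\eta(u|\bar v_2)\}$; one then chooses $a_1$ small enough (\Cref{a_cond_lemma_itself}) that this set lies in $B_{\epsilon_0}(\bar v_1)$ with $\epsilon_0=(\lambda_2(\bar v_2)-\lambda_1(\bar v_1))/(2L)$, forcing $\lambda_1(u^1_-)<\lambda_2(\bar v_2)$ pointwise in time. Here \eqref{lax_strong} enters through $\lambda_1(\bar v_1)<\lambda_2(\bar v_2)$, not through the shock speed.

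Two further steps that your sketch flags as ``obstacles'' but does not resolve are genuine lemmas in the paper. In the no-rarefaction case the ordering $h_1\le h_n$ is a comparison principle for two Filippov flows of a discontinuous velocity field (\Cref{Filippov_existence_RP}, under the gap condition \eqref{flow_GAP}); it is proved through the mollified approximations $g_{i,n}$ and is advertised as new --- speed separation alone does not give it, because Filippov solutions are non-unique and admit no naive comparison. Moreover, since $h_1$ and $h_n$ may touch, the dissipation identity of \Cref{local_entropy_dissipation_rate_systems_RP} fails as stated on the middle region (the remark after that lemma gives an explicit counterexample when $h_1\equiv h_2$); one needs \Cref{corollary_dissipation_RP}, which decomposes $\{t:h_1(t)\ne h_n(t)\}$ into countably many intervals and exploits $\dot h_1<\dot h_n$ at touching times. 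Without these two ingredients your claim that ``the regions associated with $h_1$ and with $h_n$ do not interact'' is unsubstantiated, so the proposal as written has a gap precisely at the step the paper identifies as its main new contribution.
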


\begin{remark}

\hfill

\begin{itemize}
\item
Note that H\"older's inequality and \eqref{control_shifts_two_shifts204} give control on the shifts in the form of 
\begin{align}
\frac{1}{t_0}\int\limits_0^{t_0}\abs{\sigma^i(\bar{v}_i,\bar{v}_{i+1})-\dot{h}_i(t)}\,dt \leq \frac{\sqrt{\mu}}{\sqrt{t_0}}\norm{u^0(\cdot)-\bar{v}(\cdot,0)}_{L^2(-R-rt_0,R+rt_0)}.
\end{align}
\item The relative entropy method can handle the occurance of vacuum states in the weak, entropic solution $u$ (where $u$ is in the context of \Cref{RP_theorem_main_theorem}). In particular, the method of relative entropy can be extended to allow for vacuum states in the  first slot of the relative entropy $\eta(\hspace{.005in}\cdot\hspace{.005in}|\hspace{.005in}\cdot\hspace{.005in})$. For simplicity, in the present article we do not consider these generalizations to vacuum states. However, our results and arguments would be the same even if we considered vacuum states in the solution $u$. For details, see \cite{VASSEUR2008323}, \cite[p.~346-7]{MR3519973}, and \cite[p.~277-8]{Leger2011}.
\end{itemize}
\end{remark}

For more details on the results in \Cref{RP_theorem_main_theorem}, see \Cref{RP_theorem_1} and \Cref{RP_theorem_3} below. 

\vspace{.07in}

Our method is the relative entropy method, a technique created by Dafermos \cite{doi:10.1080/01495737908962394,MR546634} and DiPerna \cite{MR523630}  to give $L^2$-type stability estimates between a Lipschitz continuous solution and a rougher solution, which is only weak and entropic for a strictly convex entropy (the so-called \emph{weak-strong} stability theory). For a system \eqref{system} endowed with an entropy $\eta$, the technique of relative entropy considers the quantity called the
\emph{relative entropy}, defined as
\begin{align}
\eta(u|v)\coloneqq \eta(u)-\eta(v)-\nabla\eta(v)\cdot (u-v).
\end{align}

Similarly, we define relative entropy-flux,
\begin{align}
q(u;v)\coloneqq q(u)-q(v)-\nabla\eta(v)\cdot (f(u)-f(v)).
\end{align}

Remark that for any constant $v\in\mathbb{R}^n$, the map $u\mapsto\eta(u|v)$ is an entropy for the system \eqref{system}, with associated entropy flux $u\mapsto q(u;v)$. Furthermore, if $u$ is a weak solution to \eqref{system} and entropic for $\eta$, then $u$ will also be entropic for $\eta(\cdot|v)$. This can be calculated directly from \eqref{system} and \eqref{entropy_condition_distributional_system_chitchat} -- note that the map $u\mapsto\eta(u|v)$ is basically $\eta$ plus a linear term.

Moreover, by virtue of $\eta$ being \emph{strictly} convex, the relative entropy is comparable to the $L^2$ distance, in the following sense:

\begin{lemma}\label{entropy_relative_L2_control_system} For any fixed compact set $V\subset\mathcal{V}$, there exists  $c^*,c^{**}>0$ such that for all $u,v\in V$,
\begin{align}
c^*\abs{a-b}^2\leq \eta(u|v)\leq c^{**}\abs{a-b}^2.
\end{align}
The constants $c^*,c^{**}$ depend on $V$ and bounds on the second derivative of $\eta$.
\end{lemma}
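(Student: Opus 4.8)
The plan is to use the second-order Taylor expansion of $\eta$ with integral remainder, together with the strict convexity of $\eta$ and a compactness argument. (Note that the $a-b$ appearing in the statement should read $u-v$.)

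First, since $\mathcal{V}$ is open and convex and $V\subset\mathcal{V}$ is compact, let $K\coloneqq\overline{\operatorname{conv}(V)}$ be the closed convex hull of $V$. In finite dimensions $K$ is compact, and since $\mathcal{V}$ is convex and contains $V$ we have $K\subset\mathcal{V}$; moreover, for any $u,v\in V$ the whole segment $\{v+s(u-v):s\in[0,1]\}$ lies in $K$. Because $\eta\in C^2(\mathcal{V})$, Taylor's theorem with integral remainder gives, for all $u,v\in V$,
\begin{align*}
\eta(u|v)=\eta(u)-\eta(v)-\nabla\eta(v)\cdot(u-v)=\int_0^1 (1-s)\,(u-v)^T\,\nabla^2\eta\big(v+s(u-v)\big)\,(u-v)\,ds.
\end{align*}

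Next, strict convexity of $\eta$ on $\mathcal{V}$ means that the symmetric matrix $\nabla^2\eta(w)$ is positive definite for every $w\in K$; since $w\mapsto\nabla^2\eta(w)$ is continuous and $K$ is compact, its smallest eigenvalue attains a strictly positive minimum, which we write as $2c^*>0$, and its largest eigenvalue attains a finite maximum $2c^{**}<\infty$ on $K$. Hence $2c^*|\xi|^2\leq \xi^T\nabla^2\eta(w)\xi\leq 2c^{**}|\xi|^2$ for every $w\in K$ and $\xi\in\mathbb{R}^n$. Substituting this into the integral representation and using $\int_0^1(1-s)\,ds=\tfrac12$ yields
\begin{align*}
c^*|u-v|^2\leq \eta(u|v)\leq c^{**}|u-v|^2
\end{align*}
for all $u,v\in V$. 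By construction $c^*$ and $c^{**}$ depend only on $V$ (through $K$) and on the bounds for $\nabla^2\eta$ on $K$, as claimed. The only point requiring any care is the passage to the convex hull, so that the Taylor remainder is evaluated where $\eta$ is $C^2$ and where $\nabla^2\eta$ has uniform two-sided bounds; this is immediate from convexity of $\mathcal{V}$ and compactness, so there is no genuine obstacle here.
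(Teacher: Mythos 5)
Your proof is correct and is exactly the standard Taylor-expansion argument the paper alludes to (the paper itself only remarks that the lemma ``follows from Taylor's theorem'' and cites references), including the correct observation that $\abs{a-b}$ in the statement should read $\abs{u-v}$. The only convention to note is that you read ``strictly convex'' as ``positive-definite Hessian,'' which is indeed how the paper uses the term elsewhere (e.g., in deriving $\nabla^2\Gamma\geq cI$), so there is no gap.
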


This lemma follows from Taylor's theorem; for a proof see \cite{Leger2011,VASSEUR2008323}.

Now that we have defined the relative entropy, we remark that what we prove in this article is actually stronger than \Cref{RP_theorem_main_theorem}. We get more than the $L^2$ stability estimate \eqref{RP_lax_strong_stability_theorem_result_Main_theorem}. In fact, \emph{we get a contraction in a properly defined pseudo-distance}. For simplicity, here in the introduction we define the pseudo-distance only when $\bar{v}$ (in the context of  \Cref{RP_theorem_main_theorem}) contains two shocks. The definition of the pseudo-distance is very similar for the case of one shock or no shock. Then: for $u$, $\bar{v}$, $\Psi_{\bar{v}}$ as in the context of \Cref{RP_theorem_main_theorem} and $\alpha,\beta>0$, we define the pseudo-distance
\begin{equation}
\begin{aligned}\label{pseudo_distance_Kang}
&E\Big(u(\cdot,t);\Psi_{\bar{v}}(\cdot,t);\alpha;\beta\Big)\coloneqq
\Bigg[\int\limits_{-R}^{h_1(t)}\eta(u(x,t)|\Psi_{\bar{v}}(x,t))\,dx+\alpha\int\limits_{h_1(t)}^{h_n(t)}\eta(u(x,t)|\Psi_{\bar{v}}(x,t))\,dx\\
&\hspace{3.3in}+\beta\int\limits_{h_n(t)}^{R}\eta(u(x,t)|\Psi_{\bar{v}}(x,t))\,dx\Bigg],
\end{aligned} 
\end{equation}
and where $R>0$ is just a large constant which allows us to consider the solution $u$ only locally. The $h_1$ and $h_n$ used in the definition \eqref{pseudo_distance_Kang} are from the Property $(\mathcal{D})$ which $\Psi_{\bar{v}}$ verifies. 

The pseudo-distance \eqref{pseudo_distance_Kang} is a technical tool we use in the proof of \Cref{RP_theorem_main_theorem} (and in particular \Cref{RP_theorem_1} and \Cref{RP_theorem_3}). By \Cref{entropy_relative_L2_control_system}, it gives us the $L^2$ stability estimate \eqref{RP_lax_strong_stability_theorem_result_Main_theorem}. The constants $\alpha$ and $\beta$ we choose do not depend on the weak, entropic solution $u$. Our use of the pseudo-distance \eqref{pseudo_distance_Kang} is based on the work \cite{MR3519973}.

\vspace{.07in}

Given a Lipschitz continuous solution $\bar{u}$ to \eqref{system}, and weak solution $u$ to \eqref{system} which is entropic for at least one entropy, the method of relative entropy can be used to determine estimates on the growth in time of 
\begin{align}\label{example_l2_control222}
\norm{\bar{u}(\cdot,t)-u(\cdot,t)}_{L^2(\mathbb{R})}.
\end{align}
To estimate the growth of this quantity, consider $\partial_t\int\eta(u|\bar{u})\,dx$. By \eqref{entropy_relative_L2_control_system}, we get estimates of the $L^2$-type \eqref{example_l2_control222}. The point is that due the entropy inequality \eqref{entropy_condition_distributional_system_chitchat}, it is more natural to consider the quantity $\int\eta(u|\bar{u})\,dx$ than to consider the $L^2$ norm itself.

However, the relative entropy method breaks down if a discontinuity is introduced into the otherwise smooth solution $\bar{u}$. In fact, simple examples for the scalar conservation laws show that when $\bar{u}$ has a discontinuity, there is no $L^2$ stability in the same sense as in the classical weak-strong estimates.

In order to recover $L^2$ stability in the sense of the classical weak-strong estimates, we must allow the discontinuity in $\bar{u}$ to be moved (`shifted') with an artificial velocity which depends on the weak solution $u$. This is the theory of shifts. Within the context of the relative entropy method, this idea was devised by Vasseur \cite{VASSEUR2008323}. Since then, this technique has been the subject of intense study by Vasseur and his team, and has yielded new results. The first result was for the scalar conservation laws in one space dimension. Further work considered the scalar viscous conservation laws in one space dimension \cite{MR3592682} and multiple space dimensions \cite{multi_d_scalar_viscous_9122017}. To handle systems, which allow for shocks from differing wave families, the technique of a-contraction is used \cite{MR3519973,MR3479527,MR3537479,serre_vasseur,Leger2011}. Recent work for scalar \cite{2017arXiv170905610K} has also allowed for many discontinuities to exist in the otherwise classical solution $\bar{u}$ which the method of relative entropy considers. By adding more and more discontinuities to the otherwise classical solution $\bar{u}$, the method of relative entropy and the theory of shifts can be used to show uniqueness for solutions which are entropic for at least one strictly convex entropy.  For a general overview of theory of shifts and the relative entropy method, see \cite[Section 3-5]{MR3475284}. 
The theory of stability up to a shift has also been used to study the asymptotic limits when the limit is discontinuous (see \cite{MR3333670} for the scalar case, \cite{MR3421617} for systems). There are many other results using the relative entropy method to study the asymptotic limit. However, without the theory of shifts these results can only consider limits which are Lipschitz continuous (see  \cite{MR1842343,MR1980855,MR2505730,MR1115587,MR1121850,MR1213991,MR2178222,MR2025302} and \cite{VASSEUR2008323} for a survey).
 
The present paper is another step in this program of stability up to a shift.

We use the construction of shifts based on the generalized characteristic introduced in \cite{scalar_move_entire_solution,move_entire_solution_system}. In this paper, we are able to handle shocks from two different wave families in the same solution, which is necessary for handling the Riemann problem with shocks from extremal wave families.
 As mentioned in \cite{scalar_move_entire_solution,move_entire_solution_system}, the generalized-characteristic-based shifts are an improvement over previous shift constructions partly because they are very simple, and thus amenable to analysis and control. In particular, to do stability estimates for a solution to the Riemann problem with two extremal shocks, we need two shifts --  one for each shock. Using prior constructions of the shifts, it was impossible to tell if the two shifts necessary for the Riemann problem would interact in a bad way or not. Using generalized-characteristic-based shifts, this analysis is easy: due to the separation of shock speeds, and the fact that generalized-characteristic-based shifts travel at  characteristic-like speed (for the characteristic of the shock they are shifting), we know immediately that the shifts for a 1-shock will stay to the left of the shifts for an n-shock.  See \Cref{RP_theorem_3} and \Cref{systems_entropy_dissipation_room}.
 
In order to control the two shifts, one for the 1-shock and one for the n-shock in a solution to the Riemann problem, we needed to extend the theory of Filippov flows to construct  the two shifts in the sense of Filippov flows, while  still maintaining control on their ordering (keeping the 1-shock shift to the left of the n-shock shift). The need to control the ordering of two different Filippov flows arose in the first paper where the theory of shifts in the context of the relative entropy method was used (see \cite[Proposition 2]{Leger2011_original}). However, our result (\Cref{Filippov_existence_RP}) is more general and has a significantly simpler proof.

In \cite{move_entire_solution_system}, for a solution $\bar{u}$ to \eqref{system} which is Lipschitz continuous on both sides of one single shock curve in space-time, to maintain $L^2$ stability between $\bar{u}$ and another solution $u$ which is weak and entropic for at least one entropy, the solution $\bar{u}$ is translated artificially in space, instead of simply moving only the discontinuity itself. However, if $\bar{u}$ is a solution to the Riemann problem, it might contain rarefactions, which have a blow up in the derivative at $t=0$. This causes tremendous entropy production if the rarefaction is artificially translated in space. Moreover, $\bar{u}$ could easily contain two shocks -- making it impossible to artificially translate $\bar{u}$ in such a way that each discontinuity is moving at the velocity necessary to maintain $L^2$ stability against the solution $u$. Both of these concerns, $\bar{u}$ containing two shocks and the blowup  of rarefactions at $t=0$, are addressed in the present paper. See \Cref{section_overview} for a related discussion.

For hyperbolic systems of conservation laws in one space dimension, one difficulty to showing stability and uniqueness of (entropic) solutions is that many systems admit only a single nontrivial entropy. The best well-posedness theory to date has been the $L^1$-based theory of Bressan, Crasta, and Piccoli  \cite{MR1686652}. However, this work only considers solutions with small total variation. It would be interesting to study the stability of these solutions in a larger class. In fact, existence of solutions to the $2\times 2$ Euler system is known. 

The present paper is a step towards a better understanding of the well-posedness of hyperbolic systems of conservation laws in one space dimension. Our techniques are of $L^2$-type. We use the relative entropy method and the related theories of shifts and a-contraction. Due to these theories not being perturbative, we are able to prove results without small data limitations. Furthermore, because we use techniques based on the relative entropy method, we only use a single entropy and require only a single entropy condition. 

\vspace{.07in}

The outline of the paper is as follows: in \Cref{hypotheses_on_system}, we give our hypotheses on the system. In \Cref{section_overview}, we present an overview of the proof of the main theorem (\Cref{RP_theorem_main_theorem}), which is actually proved in two parts: \Cref{RP_theorem_1} and \Cref{RP_theorem_3}.  In \Cref{technical_lemmas}, we present technical lemmas. In \Cref{construction_of_the_shift}, we construct the shift. Finally, in \Cref{proofs_of_multiple_stability_theorems} we prove \Cref{RP_theorem_1} and \Cref{RP_theorem_3}, which make up the main theorem \Cref{RP_theorem_main_theorem}.

\section{Hypotheses on the system}\label{hypotheses_on_system}
We will consider the following structural hypotheses $(\mathcal{H})$, $(\mathcal{H})^*$ on the system \eqref{system}, \eqref{entropy_condition_distributional_system_chitchat} regarding the 1-shock and n-shock curves (they are closely related to hypotheses in \cite{Leger2011,move_entire_solution_system,MR3519973}). For a fixed i-shock $(v_L,v_R)$ ($i=1$ or $i=n$):
\hfill \break

\begin{itemize}
\item
$(\mathcal{H}1)$: (Family of 1-shocks verifying the Liu condition) There exists $r_0>0$ such that for all $u\in B_{r_0}(v_L)$, there is a 1-shock curve (issuing from $u$) $S_u^1\colon[0,s_u)\to \mathcal{V}$ (possibly $s_u=\infty$) parameterized by arc length. Moreover, $S_u^1(0)=u$ and the Rankine-Hugoniot jump condition holds:
\begin{align}
f(S_u^1(s))-f(u)=\sigma^1_u(s)(S_u^1(s)-u),
\end{align}
where $\sigma^1_u(s)$ is the velocity function. The map $u\mapsto s_u$ is Lipschitz on $\mathcal{V}$. Further, the maps $(s,u)\mapsto S_u^1(s)$ and $(s,u)\mapsto \sigma^1_u(s)$ are both $C^1$ on $\{(s,u)|s\in [0,s_u), u\in\mathcal{V}\}$, and the following conditions are satisfied: 
\begin{align*}
&\mbox{(a) (Liu entropy condition) } \frac{\mbox{d}}{\mbox{d}s} \sigma^1_u(s) <0,\hspace{.2in} \sigma^1_u(0)=\lambda_1(u), 
\\&\mbox{(b) (shock ``strengthens'' with $s$) }  \frac{\mbox{d}}{\mbox{d}s}\eta(u|S_u^1(s))>0, \hspace{.2in}\mbox{for all } s>0,
\\&\mbox{(c) (the shock curve cannot wrap tightly around itself)}
\\&\hspace{.5in}\mbox{For all $R>0$, there exists $\tilde{S}>0$ such that}  
\\
&\hspace{.8in}\Big\{S^1_{u}(s) \Big| s\in[0.s_u), \abs{u}\leq R \mbox{ and } \abs{S^1_u(s)}\leq R\Big\} \subseteq \Big\{S^1_u(s) \Big| \abs{u} \leq R \mbox{ and } s\leq \tilde{S}\Big\}.
\end{align*}

\item
$(\mathcal{H}2)$: If $(u_L,u_R)$ is an entropic Rankine-Hugoniot discontinuity with shock speed $\sigma$, then $\sigma> \lambda_1(u_R)$.

\item
$(\mathcal{H}3)$: If $(u_L,u_R)$ (with $u_L\in B_{r_0}(v_L)$) is an entropic Rankine-Hugoniot discontinuity with shock speed $\sigma$ verifying
\begin{align}
\sigma\leq\lambda_1(u_L),
\end{align}
then $u_R$ is in the image of $S_{u_L}^1$. In other words, there exists $s_{u_R}\in[0,s_{u_L})$ such that $S_{u_L}^1(s_{u_R})=u_R$ (and by implication, $\sigma=\sigma^1_{u_L}(s_{u_R})$).
\end{itemize}

Similarly, we will consider the following structural hypotheses $(\mathcal{H})^*$ on the system \eqref{system}, \eqref{entropy_condition_distributional_system_chitchat} regarding the n-shock curves:
\hfill \break

\begin{itemize}
\item
$(\mathcal{H}1)^*$: (Family of n-shocks verifying the Liu condition) There exists $r_0>0$ such for all $u\in B_{r_0}(v_R)$, there is an n-shock curve (issuing from $u$) $S_u^n\colon[0,s_u)\to \mathcal{V}$ (possibly $s_u=\infty$) parameterized by arc length. Moreover, $S_u^n(0)=u$ and the Rankine-Hugoniot jump condition holds:
\begin{align}
f(S_u^n(s))-f(u)=\sigma^n_u(s)(S_u^n(s)-u),
\end{align}
where $\sigma^n_u(s)$ is the velocity function. The map $u\mapsto s_u$ is Lipschitz on $\mathcal{V}$. Further, the maps $(s,u)\mapsto S_u^n(s)$ and $(s,u)\mapsto \sigma^n_u(s)$ are both $C^1$ on $\{(s,u)|s\in [0,s_u), u\in\mathcal{V}\}$, and the following conditions are satisfied: 
\begin{align*}
&\mbox{(a) (Liu entropy condition) } \frac{\mbox{d}}{\mbox{d}s} \sigma^n_u(s) >0,\hspace{.2in} \sigma^n_u(0)=\lambda_n(u), 
\\&\mbox{(b) (shock ``strengthens'' with $s$) }  \frac{\mbox{d}}{\mbox{d}s}\eta(u|S_u^n(s))>0, \hspace{.2in}\mbox{for all } s>0,
\\&\mbox{(c) (the shock curve cannot wrap tightly around itself)}
\\&\hspace{.5in}\mbox{For all $R>0$, there exists $\tilde{S}>0$ such that}  
\\
&\hspace{.8in}\Big\{S^n_{u}(s) \Big| s\in[0.s_u), \abs{u}\leq R \mbox{ and } \abs{S^n_u(s)}\leq R\Big\} \subseteq \Big\{S^n_u(s) \Big| \abs{u} \leq R \mbox{ and } s\leq \tilde{S}\Big\}.
\end{align*}

\item
$(\mathcal{H}2)^*$: If $(u_R,u_L)$ is an entropic Rankine-Hugoniot discontinuity with shock speed $\sigma$, then $\sigma< \lambda_n(u_L)$.

\item
$(\mathcal{H}3)^*$: If $(u_R,u_L)$ (with $u_R\in B_{r_0}(v_R)$), is an entropic Rankine-Hugoniot discontinuity with shock speed $\sigma$ verifying
\begin{align}
\sigma\geq\lambda_n(u_R),
\end{align}
then $u_L$ is in the image of $S_{u_R}^n$. In other words, there exists $s_{u_L}\in[0,s_{u_R})$ such that $S_{u_R}^n(s_{u_L})=u_L$ (and by implication, $\sigma=\sigma^n_{u_R}(s_{u_L})$).
\end{itemize}

\begin{remark}
\hfill \break
For useful remarks on these hypotheses, see \cite{move_entire_solution_system,MR3519973,Leger2011}. We include the remarks here for completeness.
\begin{itemize}
\item
Note that the system \eqref{system} verifies the hypotheses $(\mathcal{H}1)$-$(\mathcal{H}3)$ on the 1-shock family if and only if the system
\begin{align}
\begin{cases}
\partial_t u - \partial_x f(u)=0,\mbox{ } t>0,\\
u(x,0)=u^0(x) \mbox{ for } x\in\mathbb{R}.
\end{cases}
\end{align}
verifies the properties $(\mathcal{H}1)^*$-$(\mathcal{H}3)^*$ for the n-shock family. It is in this way that $(\mathcal{H}1)$-$(\mathcal{H}3)$ are dual to $(\mathcal{H}1)^*$-$(\mathcal{H}3)^*$.
\item
On top of the Liu entropy condition (Property (a) in $(\mathcal{H}1)$), we also assume Property (b), which says that the 1-shock strength grows along the 1-shock curve $S^1_{u_L}$ when measured via the pseudo-distance of the relative entropy (recall that the map $(u,v)\mapsto\eta(u|v)$ measures  $L^2$-distance somehow -- see \Cref{entropy_relative_L2_control_system}). This growth condition arises naturally in the study of admissibility criteria for systems of conservation laws. In particular, Property (b) ensures that Liu admissible shocks are entropic for the entropy $\eta$ even for moderate-to-strong shocks (see \cite{MR1600904,MR0093653,MR2053765}). 

In \cite{MR3338447}, Barker, Freist\"{u}hler, and Zumbrun show that stability  and in particular contraction fails to hold for the full Euler system if we replace Property (b) with
\begin{align}
\frac{\mbox{d}}{\mbox{d}s}\eta(S_u^1(s))>0,\hspace{.2in} s>0.
\end{align}
This shows that it is better to measure shock strength using the relative entropy rather than the entropy itself.
\item
Recall the famous Lax E-condition for an  i-shock $(u_L,u_R,\sigma)$,
\begin{align}
\lambda_i(u_R)\leq\sigma\leq\lambda_i(u_L).
\end{align}
The hypothesis $(\mathcal{H}2)$ is implied by the first half of the Lax E-condition along with the hyperbolicity of the system \eqref{system}. In addition, we do not allow for right 1-contact discontinuities. 
\item
The hypothesis $(\mathcal{H}3)$ is a statement about the well-separation of the 1-shocks from all other Rankine-Hugoniot discontinuities entropic for $\eta$; the 1-shocks do not interfere with any other shocks. In particular, $(\mathcal{H}3)$ will hold for any strictly hyperbolic system in the form \eqref{system} if all Rankine-Hugoniot discontinuities $(u_L,u_R,\sigma)$ entropic for $\eta$ lie on an i-shock curve for some $i$ and the extended Lax admissibility condition holds:
\begin{align}\label{extended_lax_admissibility_condition}
\lambda_{i-1}(u_L) \leq \sigma \leq \lambda_{i+1} (u_R),
\end{align}
where $\lambda_0\coloneqq -\infty$ and $\lambda_{n+1}\coloneqq\infty$. Moreover, we only use the first inequality in \eqref{extended_lax_admissibility_condition} and the fact that $\lambda_1(u)\leq \lambda_{i-1}(u)$ for all $u\in\mathcal{V}$ and for all $i>1$.

Furthermore, note that for \emph{any} strictly hyperbolic system in the form \eqref{system}, if  $u_R$ and $u_L$ live in a fixed compact set, then there exists $\delta>0$ such that \eqref{extended_lax_admissibility_condition} will hold if $\abs{u_R-u_L}\leq\delta$. Similarly,  for any strictly hyperbolic system endowed with a strictly convex entropy, all Rankine-Hugoniot discontinuities $(u_L,u_R,\sigma)$ entropic for $\eta$ will locally be in the form $S^i_{u_L}(s)=u_R$ for some $s>0$, and where $S^i_{u_L}$ is the i-shock curve issuing  from $u_L$. See \cite[Theorem 1.1, p.~140]{lefloch_book} and more generally \cite[p.~140-6]{lefloch_book}. For the full Euler system, $(\mathcal{H}3)$ will hold regardless of the size of the shock $(u_L,u_R)$.
\item
Note that due to the map $(s,u)\mapsto S_u^1(s)$ being Lipschitz, we have 
\begin{align}
\abs{S_u^1(s)-u}=\abs{S_u^1(s)-S_u^1(0)}\leq \mbox{Lip}\Big[(s,u)\mapsto S_u^1(s)\Big] s,
\end{align}
for all $u\in B_{r_0}(I_{-})$ and all $s\in [0,s_u)$. Equivalently,
\begin{align}\label{shock_strength_comparable_s_systems1}
\frac{1}{\mbox{Lip}\Big[(s,u)\mapsto S_u^1(s)\Big] }\abs{S_u^1(s)-u}\leq s.
\end{align}
\item
On the state space $\mathcal{V}$ where the strictly convex entropy $\eta$ is defined, the system \eqref{system} is hyperbolic. Further, by virtue of $f\in C^2(\mathcal{V})$, the eigenvalues of $\nabla f (u)$ vary continuously on the state space $\mathcal{V}$. Further, if the eigenvalue $\lambda_1(u)$ ($\lambda_n(u)$) is simple for $u\in\mathcal{V}$ (such as when the system \eqref{system} is strictly hyperbolic), the map $u\mapsto \lambda_1(u)$ ($u\mapsto \lambda_n(u)$) will be in $C^1(\mathcal{V})$ due to the implicit function theorem.
\end{itemize}
\end{remark}

We study solutions $u$ to \eqref{system} among the class of functions verifying a strong trace property (first introduced in \cite{Leger2011}):

\begin{definition}\label{strong_trace_definition}
Fix $T>0$. Let $u\colon\mathbb{R}\times[0,T)\to\mathbb{R}^n$ verify $u\in L^\infty(\mathbb{R}\times[0,T))$. We say $u$ has the \emph{strong trace property} if for every fixed Lipschitz continuous map $h\colon [0,T)\to\mathbb{R}$, there exists $u_+,u_-\colon[0,T)\to\mathbb{R}^n$ such that
\begin{align}
\lim_{n\to\infty}\int\limits_0^{t_0}\esssup_{y\in(0,\frac{1}{n})}\abs{u(h(t)+y,t)-u_+(t)}\,dt=\lim_{n\to\infty}\int\limits_0^{t_0}\esssup_{y\in(-\frac{1}{n},0)}\abs{u(h(t)+y,t)-u_-(t)}\,dt=0
\end{align}
for all $t_0\in(0,T)$.
\end{definition}

Note that for example a function $u\in L^\infty(\mathbb{R}\times[0,T))$ will satisfy the strong trace property if for each fixed $h$, the right and left limits
\begin{align}
\lim_{y\to0^{+}}u(h(t)+y,t)\hspace{.7in}\mbox{and}\hspace{.7in}\lim_{y\to0^{-}}u(h(t)+y,t)
\end{align}
exist for almost every $t$. In particular, a function $u\in L^\infty(\mathbb{R}\times[0,T))$ will have strong traces according to \Cref{strong_trace_definition} if $u$ has a representative which is in $BV_{\text{loc}}$. However, the strong trace property is weaker than $BV_{\text{loc}}$.

\section{Overview of the proofs of \Cref{RP_theorem_1} and \Cref{RP_theorem_3}}\label{section_overview}

Within the context of the relative entropy method, the theory of shifts often works by moving shocks with an artificial velocity, as opposed to the velocity dictated by the Rankine-Hugoniot jump condition. One difficulty in applying the theory of shifts to solving the Riemann problem is, what to do if the graph of a $x=h(t)$ shift function (in the x-t plane) for a particular shock intersects one of the rarefaction fans? At this point, it is not guaranteed that the states to the left and  right of the shift function are an entropic discontinuity (they might not even satisfy Rankine-Hugoniot) -- and this prevents analysis. But this is again solved using generalized-characteristic-based shifts. For example, the generalized-characteristic-based shifts for a 1-shock in $\bar{v}$ will either travel at characteristic-like speed of $u$, or they will travel to the left very quickly (super-characteristic speed). 

When the generalized-characteristic-based shift (for a 1-shock) is traveling to the left very fast, we do not have to worry about it intersecting with a rarefaction fan, which will spread out with characteristic speed. When the generalized-characteristic-based shift is traveling with characteristic speed, then we must control the speed of generalized characteristic of $u$ versus the speed the rarefaction fans in $\bar{v}$ are spreading out. Heuristically, the function $u$ goes into the first slot $\eta(\cdot|)$ of the relative entropy, and $\bar{v}$ goes into the second $\eta(|\cdot)$, and there is little connection between the two slots of the relative entropy. However, through the strong form of Lax's E-condition \eqref{lax_strong}, we can connect these two worlds of the first and second slot of the relative entropy and show that the generalized characteristic of $u$ will not intersect the rarefaction fans in the x-t plane. In fact, the analysis will depend on the quantity  $(\lambda_{i+1}(\bar{v}_{i+1})-\lambda_i(\bar{v}_i))$ if $\bar{v}$ has an i-shock $(\bar{v}_{i},\bar{v}_{i+1})$. For example, for hyperbolic systems of conservation laws where the characteristics speeds are completely separated in value, any shock will satisfy \eqref{lax_strong}. Furthermore, for such systems it is clear that a shift function traveling at the speed of a generalized characteristic for one wave family cannot intersect the rarefaction fan of a different wave family. See \Cref{RP_theorem_1}.

If $\bar{v}$ does not contain any rarefactions, then we do not have to compare the shifts to the rarefactions to make sure they are not interacting. Instead, we only need to prevent the shifts corresponding to a 1-shock from interacting with the shifts corresponding to an n-shock. We want the two shifts to stay away from each other, because if they touch and stick together then the left and right hand states to the left and right of the (now single) shift will in general not make an entropic shock. Without rarefactions in between these two shifts to separate them, we cannot use the arguments from \Cref{RP_theorem_1}. We instead study the two shifts directly. See \Cref{RP_theorem_3}.


\section{Technical Lemmas}\label{technical_lemmas}

For use throughout this paper, we define the relative flux
\begin{align}\label{Z_def}
f(a|b)\coloneqq f(a)-f(b)-\nabla f (b)(a-b),
\end{align}
for $a,b\in\mathbb{M}^{n\times 1}$. Further, for $a,b\in\mathbb{M}^{n\times 1}$, we define the relative $\nabla\eta$:
\begin{align}
\nabla\eta(a|b)\coloneqq \nabla\eta(a)-\nabla\eta(b)-[a-b]^T\nabla^2\eta(b).
\end{align}

\begin{lemma}\label{a_cond_lemma_itself}
Fix $B>0$. Then there exists a constant $C>0$ depending on $B$ such that the following holds:
 
If $u_L,u_R\in\mathcal{V}$ with $\abs{u_L},\abs{u_R}\leq B$, then whenever $\alpha,\theta\in(0,1)$ verify 
\begin{align}
\label{cond_a}
\alpha<\frac{\theta^2}{C},
\end{align}
then $R_a\coloneqq\{u | \eta(u|u_L)\leq a\eta(u|u_R)\}\subset B_{\theta}(u_L)$ for all $0<a<\alpha$. 
\end{lemma}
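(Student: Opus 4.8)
The plan is to expand both relative entropies to second order around $u_L$ and exploit strict convexity of $\eta$ together with compactness. Fix $B>0$ and work on the compact set $V \coloneqq \overline{B_B(0)} \cap \mathcal V$ (shrinking $B$ slightly if needed so this sits inside $\mathcal V$), and let $c^*, c^{**}>0$ be the constants from \Cref{entropy_relative_L2_control_system} for this $V$. First I would treat the elementary regime: if $\theta$ is not small, say $\theta \ge \operatorname{diam}(V)$, the inclusion $R_a \subset B_\theta(u_L)$ is automatic, so it suffices to handle $\theta$ small, and in particular we may assume $\abs{u_L - u_R}$ is whatever we like relative to $\theta$ only after splitting into cases. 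So the two genuine cases are: (i) $u_R$ far from $u_L$, and (ii) $u_R$ close to $u_L$.

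In case (i), suppose $\abs{u_L - u_R} \ge \theta$ (this is the ``easy direction''): then for $u \in R_a$ we have $\eta(u|u_L) \le a\, \eta(u|u_R) \le a\, c^{**} \abs{u - u_R}^2 \le a\, c^{**}(\abs{u-u_L} + \abs{u_L - u_R})^2$. If moreover $u \notin B_\theta(u_L)$, i.e. $\abs{u - u_L} \ge \theta$, then $\abs{u - u_R} \le \abs{u-u_L} + \abs{u_L - u_R}$ while $c^* \theta^2 \le c^* \abs{u-u_L}^2 \le \eta(u|u_L)$; combining, $c^* \abs{u - u_L}^2 \le a\, c^{**}(\abs{u - u_L} + \abs{u_L-u_R})^2$, and since $u$ ranges over the bounded set $V$ both $\abs{u-u_L}$ and $\abs{u_L - u_R}$ are bounded by $D \coloneqq \operatorname{diam}(V)$, giving $c^* \theta^2 \le a\, c^{**}(2D)^2$, contradiction once $\alpha < \theta^2/C$ with $C > 4 D^2 c^{**}/c^*$. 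The subtle point here is that this argument only rules out points of $R_a$ at distance $\ge \theta$ from $u_L$ under the standing assumption $\abs{u_L-u_R}\ge\theta$; when $\abs{u_L-u_R}<\theta$ we need a different mechanism because $u_R$ itself could be a near-minimizer of $\eta(u|u_L)$.

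In case (ii), $\abs{u_L - u_R} < \theta$: now I would use the expansion $\eta(u|u_R) = \eta(u|u_L) + \nabla\eta(u_R|u_L)\cdot(\text{linear in } u) + (\text{correction})$ — more precisely, write $\eta(u|u_R) - \eta(u|u_L)$ and Taylor-expand in $u_R$ about $u_L$; the leading term is linear in $(u_R - u_L)$ with a coefficient that is itself $O(\abs{u - u_L})$ (because $\eta(u|u_L)$ has a critical point at $u = u_L$), so one gets a bound of the form $\abs{\eta(u|u_R) - \eta(u|u_L)} \le C_1 \abs{u - u_L}\,\abs{u_R - u_L} + C_2 \abs{u_R - u_L}^2$ uniformly on $V$. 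Hence $u \in R_a$ forces $(1-a)\eta(u|u_L) \le a\big(C_1 \abs{u-u_L}\abs{u_R-u_L} + C_2\abs{u_R-u_L}^2\big)$, and using $\eta(u|u_L) \ge c^*\abs{u-u_L}^2$ and $\abs{u_R - u_L} < \theta$ this reads $c^* \abs{u-u_L}^2 \lesssim a\,\theta\,\abs{u - u_L} + a\,\theta^2$ (absorbing the harmless $(1-a)$ factor since $a<1$); solving this quadratic inequality in $\abs{u-u_L}$ yields $\abs{u-u_L} \le C_3 \sqrt{a}\,\theta$ for a constant $C_3$ depending only on $B$ (through $c^*, C_1, C_2$). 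Since $a < \alpha < \theta^2/C$ gives $\sqrt a < \theta/\sqrt C$, we get $\abs{u - u_L} \le C_3 \theta^2/\sqrt C < \theta$ once $C$ is large enough (and, say, $\theta \le 1$, which is the only remaining case after the reduction); choosing $C$ to be the max of the constants required in cases (i) and (ii), and large enough to kill the $\theta$-vs-$\theta^2$ discrepancy, finishes the proof.

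The main obstacle is case (ii): one has to see that even when $u_R$ is within $\theta$ of $u_L$, the set $R_a$ collapses to within $o(\theta)$ of $u_L$ — this is where the quantitative gap between $\alpha$ and $\theta$ (namely $\alpha \sim \theta^2$, not $\alpha \sim \theta$) is used in an essential way, and it is the reason the hypothesis is $\alpha < \theta^2/C$ rather than the more naive $\alpha < \theta/C$. Getting the Taylor expansion of $\eta(u|u_R) - \eta(u|u_L)$ with the right structure (the crucial feature being that the coefficient of the linear-in-$(u_R-u_L)$ term vanishes to first order in $(u - u_L)$) is the one computation I would actually carry out carefully; everything else is bookkeeping with $c^*, c^{**}$ and diameters.
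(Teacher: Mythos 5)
The paper does not prove this lemma itself --- it defers to the proof of Lemma 4.3 in Kang--Vasseur --- so I am comparing your argument to that standard one. Your two core estimates are correct: the bound $c^*\theta^2\leq a\,c^{**}(2D)^2$ in case (i), and the identity $\eta(u|u_R)-\eta(u|u_L)=\eta(u_L|u_R)+\big(\nabla\eta(u_L)-\nabla\eta(u_R)\big)(u-u_L)$ underlying case (ii), which indeed gives $\abs{\eta(u|u_R)-\eta(u|u_L)}\leq C_1\abs{u-u_L}\abs{u_R-u_L}+C_2\abs{u_R-u_L}^2$ and then $\abs{u-u_L}\lesssim\sqrt{a}$ via the quadratic inequality. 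But there is one genuine gap: you quietly restrict the running variable $u$ of $R_a$ to the compact set $V=\overline{B_B(0)}\cap\mathcal V$. The lemma only bounds $u_L,u_R$; the set $R_a$ ranges over all of $\mathcal V$, which is unbounded for the systems of interest. Both halves of your argument break there --- case (i) uses $\abs{u-u_R}\leq 2D$ with $D=\operatorname{diam}(V)$, and case (ii) uses $\eta(u|u_L)\geq c^*\abs{u-u_L}^2$, whose constant from \Cref{entropy_relative_L2_control_system} is only valid when \emph{both} arguments lie in a fixed compact set. The standard repair (and the route taken in the cited proof) is to observe that $\eta(u|u_L)-a\eta(u|u_R)=(1-a)\eta(u)+(\text{affine in }u)$, so $R_a$ is a sublevel set of a convex function, hence convex, and it contains $u_L$; therefore $R_a\subset B_\theta(u_L)$ follows once $R_a\cap\partial B_\theta(u_L)=\varnothing$, and on that sphere $\abs{u}\leq B+\theta\leq B+1$, which restores compactness and makes both of your computations legitimate. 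Without some such step your proof establishes only $R_a\cap\overline{B_B(0)}\subset B_\theta(u_L)$.

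Separately, your case split is vacuous and your stated motivation for it is mistaken. The case (i) computation never uses $\abs{u_L-u_R}\geq\theta$: for any $u_L,u_R$ and any $u\notin B_\theta(u_L)$ in the compact set, $c^*\theta^2\leq\eta(u|u_L)\leq a\,c^{**}(2D)^2$ already contradicts \eqref{cond_a} with $C>4D^2c^{**}/c^*$. Likewise the Taylor bound in case (ii) holds for all $u_L,u_R$ with $\abs{u_R-u_L}\leq 2B$ and yields $\abs{u-u_L}\leq C_3\sqrt{a}\leq C_3\theta/\sqrt{C}<\theta$ without ever invoking $\abs{u_R-u_L}<\theta$. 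So either half, once the compactness issue is fixed, is a complete proof; the claim that case (i) ``only rules out points \ldots under the standing assumption $\abs{u_L-u_R}\geq\theta$'' is not correct, and the scaling $\alpha\sim\theta^2$ is already forced by the elementary case (i) estimate rather than by anything special to case (ii).
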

\begin{remark}
The set $R_a$ is compact.
\end{remark}
The proof of \Cref{a_cond_lemma_itself} is found in the proof of Lemma 4.3 in \cite{MR3519973}. 

The following Lemma gives us the entropy dissipation caused by changing the  domain of integration and translating the piecewise-smooth solution $\bar{u}$ in $x$ (by a function $X(t)$).
\begin{lemma}[Local entropy dissipation rate]\label{local_entropy_dissipation_rate_systems_RP}

Let $u,\bar{u}\in L^\infty(\mathbb{R}\times[0,T))$ be weak solutions to \eqref{system}. We assume that $u,
\bar{u}$ are entropic for the entropy $\eta$. Assume that $\bar{u}$ is Lipschitz continuous on $\{(x,t)\in\mathbb{R}\times[0,T) | x<s(t)\}$ and on $\{(x,t)\in\mathbb{R}\times[0,T) | x>s(t)\}$, where $s:[0,T)\to\mathbb{R}$ is a Lipschitz function . Assume also that $u$ verifies the strong trace property (\Cref{strong_trace_definition}).  Let $T,t_0,t_1\in\mathbb{R}$ verify $0\leq t_0< t_1 <T$. Let $h_1,h_2, X:[0,T)\to\mathbb{R}$ be Lipschitz continuous functions with the property that $h_2(t)-h_1(t)>0$ for all $t\in(t_0,t_1)$. We also require that if $t_0\neq0$,  then $h_1(t_0)=h_2(t_0)$. Further assume that for all $t\in[t_0,t_1]$, $s(t)-X(t)$ is not in the open set $(h_1(t),h_2(t))$.

Then,
\begin{equation}
\begin{aligned}\label{local_compatible_dissipation_calc_RP}
\int\limits_{t_0}^{t_1} \bigg[q(u(h_1(t)+,t);\bar{u}((h_1(t)+X(t))+,t))-q(u(h_2(t)-,t);\bar{u}((h_2(t)+X(t))-,t))+
\\
\dot{h}_2(t)\eta(u(h_2(t)-,t)|\bar{u}((h_2(t)+X(t))-,t))-
\\
\dot{h}_1(t)\eta(u(h_1(t)+,t)|\bar{u}((h_1(t)+X(t))+,t))\bigg]\,dt
\\
\geq
\int\limits_{h_1(t_1)}^{h_2(t_1)}\eta(u(x,t_1)|\bar{u}(x+X(t_1),t_1))\,dx
\\
-\int\limits_{h_1(t_0)}^{h_2(t_0)}\eta(u^0(x)|\bar{u}^0(x))\,dx
+
\\
\int\limits_{t_0}^{t_1}\int\limits_{h_1(t)}^{h_2(t)}\Bigg(\partial_x \bigg|_{(x+X(t),t)}\hspace{-.45in} \nabla\eta(\bar{u}(x,t))\Bigg)f(u(x,t)|\bar{u}(x+X(t),t))
\\
+\Bigg(2\partial_x\bigg|_{(x+X(t),t)}\hspace{-.45in}\bar{u}^T(x,t)\dot{X}(t)\Bigg)\nabla^2\eta(\bar{u}(x+X(t),t))[u(x,t)-\bar{u}(x+X(t),t)]\,dxdt.
\end{aligned}
\end{equation}

\end{lemma}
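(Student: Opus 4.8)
The plan is to derive \eqref{local_compatible_dissipation_calc_RP} by applying the distributional entropy inequality for $u$ and the (weighted, translated) distributional entropy equality/inequality for $\bar u$ on the time-space region
\[
\Omega \coloneqq \{(x,t) : t\in(t_0,t_1),\ h_1(t)<x<h_2(t)\},
\]
and then tracking the boundary terms. First I would recall that since $u$ is a weak entropic solution, the measure $\partial_t\eta(u)+\partial_x q(u)$ is nonpositive; likewise $\bar u$ satisfies a corresponding relation, but because $\bar u$ is only piecewise Lipschitz across $x=s(t)$ it is more convenient to work with the \emph{relative} quantities $\eta(u|\bar u(\cdot+X))$ and $q(u;\bar u(\cdot+X))$ directly. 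The hypothesis that $s(t)-X(t)\notin(h_1(t),h_2(t))$ for all $t\in[t_0,t_1]$ is precisely what guarantees that inside $\Omega$ the translated comparison function $x\mapsto\bar u(x+X(t),t)$ is genuinely Lipschitz (the shock of $\bar u$ never enters the strip), so no interior jump contribution from $\bar u$ appears and all the manipulations below are legitimate.

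The core computation is the standard relative-entropy identity: for $\bar u$ smooth on $\Omega$, one has pointwise
\[
\partial_t\big(\eta(u|\bar u)\big)+\partial_x\big(q(u;\bar u)\big)
= \big(\text{entropy dissipation of }u\big) - \big(\partial_x\nabla\eta(\bar u)\big)f(u|\bar u) - (\text{terms from }\partial_t\bar u),
\]
and after substituting $\bar u(x,t)\rightsquigarrow\bar u(x+X(t),t)$ the extra $\dot X(t)$ from the chain rule produces exactly the last line of \eqref{local_compatible_dissipation_calc_RP}, namely the term
\[
\Big(2\,\partial_x\big|_{(x+X(t),t)}\bar u^T(x,t)\,\dot X(t)\Big)\nabla^2\eta(\bar u(x+X(t),t))\big[u(x,t)-\bar u(x+X(t),t)\big].
\]
I would justify this by the same route as in \cite{VASSEUR2008323} and \cite{move_entire_solution_system}: write everything in weak form against a test function that approximates $\mathbbm 1_{\Omega}$, use that $u$ has strong traces along the Lipschitz curves $x=h_1(t)$ and $x=h_2(t)$ (Definition~\ref{strong_trace_definition}) so the lateral boundary integrals converge to the trace integrals $\int q(u(h_i(t)\pm,t);\bar u(\cdots))\,dt$ and $\int \dot h_i(t)\,\eta(u(h_i(t)\pm,t)|\bar u(\cdots))\,dt$ with the correct signs, and use the entropy \emph{inequality} for $u$ to throw away the (favorably signed) dissipation measure of $u$, which is where the inequality in \eqref{local_compatible_dissipation_calc_RP} comes from. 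The horizontal boundaries at $t=t_0$ and $t=t_1$ give the two spatial integrals $\int_{h_1(t_1)}^{h_2(t_1)}\eta(u|\bar u)\,dx$ and $-\int_{h_1(t_0)}^{h_2(t_0)}\eta(u^0|\bar u^0)\,dx$; the requirement $h_1(t_0)=h_2(t_0)$ when $t_0\neq 0$ makes the $t=t_0$ integral degenerate to a point in that case, and the initial-data term at $t=0$ is handled by the $\int\Phi(x,0)u^0$ term already present in \eqref{u_solves_equation_integral_formulation_chitchat} and \eqref{u_entropy_integral_formulation_chitchat}.

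The main obstacle I anticipate is the rigorous passage to the limit in the test-function approximation of $\Omega$, i.e.\ controlling the lateral boundary terms along $x=h_1(t)$ and $x=h_2(t)$ and getting the traces of both $u$ (from the strong trace property, which is exactly why that hypothesis is imposed) and of the \emph{translated} $\bar u$ (which is Lipschitz in the strip, so its trace is just its pointwise value, but one must check the translation $X(t)$ interacts correctly with the sup-norm estimates). A secondary technical point is bookkeeping the chain-rule term from $t\mapsto\bar u(x+X(t),t)$ carefully so that the $\dot X$ contribution lands with the stated factor of $2$ and the right placement of $\nabla^2\eta$ — this is a routine but delicate computation that, as in \cite{move_entire_solution_system,MR3519973}, relies on the symmetry $\nabla^2\eta(\bar u)\,\nabla f(\bar u) = (\nabla f(\bar u))^T\nabla^2\eta(\bar u)$ coming from the compatibility relation \eqref{compatibility_relation_eta_system}. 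Everything else is the now-standard relative entropy bookkeeping and I would simply cite \cite{VASSEUR2008323,move_entire_solution_system} for the detailed identities while emphasizing the new feature here, namely that the strip $(h_1(t),h_2(t))$ is bounded by two moving Lipschitz curves rather than being all of $\mathbb R$ or a half-line.
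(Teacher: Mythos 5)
Your proposal follows essentially the same route as the paper: establish the translated relative-entropy inequality in weak form (the paper's Step 1, modeled on Dafermos--DiPerna plus the $\dot X$ chain-rule term), then test against piecewise-linear approximations of the indicator of the strip $\{h_1(t)<x<h_2(t)\}$ and pass to the limit using the strong trace property. The only details your sketch leaves implicit that the paper handles explicitly are the lower semicontinuity of $t\mapsto\int_{h_1(t)}^{h_2(t)}\eta(u|\bar u)\,dx$ (needed to evaluate the top boundary term at \emph{every} $t_1$, not just almost every) and, in the degenerate case $h_1(t_0)=h_2(t_0)$ with $t_0>0$, starting the time cutoff at $t^{**}>t_0$ and letting $t^{**}\to t_0^+$ so the extra boundary contribution is bounded by $C\sup|h_2-h_1|\to0$.
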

\begin{remark}
If $t_0\neq 0$, then $h_1(t_0)=h_2(t_0)$ and 
\begin{align}
\int\limits_{h_1(t_0)}^{h_2(t_0)}\eta(u^0(x)|\bar{u}^0(x))\,dx=0.
\end{align}
\end{remark}
\begin{remark}
\Cref{local_entropy_dissipation_rate_systems_RP}, and in particular \eqref{local_compatible_dissipation_calc_RP}, are not true if $h_1(t)=h_2(t)$ for all $t$ in some open interval.

To see this, consider the following simple example: Let $\bar{u}\coloneqq v$ for some constant state $v\in\mathbb{R}^n$. Let $(u_L,u_R,\sigma(u_L,u_R))$ be a shock entropic for the entropy $\eta$. Define
\begin{align}
 u(x,t)\coloneqq
  \begin{cases}
   u_L & \text{if } x<\sigma(u_L,u_R) t\\
    u_R & \text{if } x>\sigma(u_L,u_R) t.
  \end{cases}
\end{align}

Choose $h_1(t)\coloneqq h_2(t)\coloneqq\sigma(u_L,u_R) t$.

With these choices, the right hand side of \eqref{local_compatible_dissipation_calc_RP} vanishes. 

The left hand side of \eqref{local_compatible_dissipation_calc_RP} becomes 
\begin{align}\label{leftHS_becomes_RP}
\int\limits_{t_0}^{t_1} \bigg[q(u_R;v)-q(u_L;v)-\sigma(u_L,u_R)(\eta(u_R|v)-\eta(u_L|v))\bigg]\,dt.
\end{align}

Note that because $u$ is entropic for the entropy $\eta$, $u$ is also entropic for the entropy
\begin{align}\label{relative_entropy_with_v_RP}
u\mapsto \eta(u|v).
\end{align}
This follows because the map \eqref{relative_entropy_with_v_RP} is simply the function $\eta(u)$ plus a term (affine) linear in $u$.

Thus, the shock $(u_L,u_R,\sigma(u_L,u_R))$ is entropic for \eqref{relative_entropy_with_v_RP}. This implies that  
\begin{align}\label{shock_entropic_rel_entrop_RP_example}
q(u_R;v)-q(u_L;v)-\sigma(u_L,u_R)(\eta(u_R|v)-\eta(u_L|v))\leq0.
\end{align}

By choosing a shock $(u_L,u_R,\sigma(u_L,u_R))$ such that \eqref{shock_entropic_rel_entrop_RP_example} is \emph{strictly} negative, we have shown that \eqref{local_compatible_dissipation_calc_RP} does not hold (recall \eqref{leftHS_becomes_RP}). 

Intuitively, why does \eqref{local_compatible_dissipation_calc_RP} fails to hold when $h_1(t)=h_2(t)$? This is because when $h_1(t)\neq h_2(t)$ the function $h_1$ thinks that if it moves to the right (or left), it is reducing (or creating more of) the entropy in the integral 
\begin{align}\label{integral_think_of_mass_intuition}
\int\limits_{h_1(t)}^{h_2(t)}\eta(u(x,t)|\bar{u}(x+X(t),t))\,dx,
\end{align} 
by contracting (or expanding) the domain of integration. And similarly for $h_2$. However, if for a positive amount of time $h_1(t)=h_2(t)$, then \eqref{integral_think_of_mass_intuition} is always zero and no mass is created or destroyed.

However, as long as $h_1(t)=h_2(t)$ only for brief moments, a version of  \Cref{local_entropy_dissipation_rate_systems_RP} still holds. See \Cref{corollary_dissipation_RP}.

\end{remark}
\begin{proof}[Proof of \Cref{local_entropy_dissipation_rate_systems_RP}]

This proof is based on a similar argument in \cite{scalar_move_entire_solution}.

\hfill\break
\uline{Step 1}
\hfill\break

We first show that for all positive, Lipschitz continuous test functions $\phi:\mathbb{R}\times[0,T)\to\mathbb{R}$ with compact support and that vanish on the set $\{(x,t)\in\mathbb{R}\times[0,T) | x=s(t)-X(t)\}$, we have
\begin{equation}
\begin{aligned}\label{combined1}
&\int\limits_{0}^{T} \int\limits_{-\infty}^{\infty} [\partial_t \phi \eta(u(x,t)|\bar{u}(x+X(t),t))+\partial_x \phi q(u(x,t);\bar{u}(x+X(t),t))]\,dxdt 
\\&\hspace{1in}+  \int\limits_{-\infty}^{\infty}\phi(x,0)\eta(u^0(x)|\bar{u}^0(x))\,dx \\
&\hspace{1in}\geq
\int\limits_{0}^{T} \int\limits_{-\infty}^{\infty}\phi\Bigg[\Bigg(
\partial_x \bigg|_{(x+X(t),t)} \hspace{-.45in}\nabla\eta(\bar{u}(x,t))\Bigg) f(u(x,t)|\bar{u}(x+X(t),t))
\\
&\hspace{1in}+\Bigg(2\partial_x\bigg|_{(x+X(t),t)}\hspace{-.45in}\bar{u}^T(x,t)\dot{X}(t)\Bigg)\nabla^2\eta(\bar{u}(x+X(t),t))[u(x,t)-\bar{u}(x+X(t),t)]
\Bigg]\,dxdt.
\end{aligned}
\end{equation}
Note that \eqref{combined1} is the analogue in our case of the key estimate used in Dafermos's proof of weak-strong stability, which gives a relative version of the entropy inequality (see equation (5.2.10) in \cite[p.~122-5]{dafermos_big_book}). The proof of \eqref{combined1} is based on the famous weak-strong stability proof of Dafermos and DiPerna \cite[p.~122-5]{dafermos_big_book}. We then modify the Dafermos and DiPerna proof as in \cite{scalar_move_entire_solution} to allow for the translation of the solution $\bar{u}$ by the function $X$ and to account for the additional entropy this creates.

%

Note that on the complement of the set $\{(x,t)\in\mathbb{R}\times[0,T) | x=s(t)\}$, $\bar{u}$ is smooth  and so we have the exact equalities,
\begin{align}
\partial_t\bigg|_{(x,t)}\hspace{-.21in}\big(\bar{u}(x,t)\big)+\partial_x\bigg|_{(x,t)}\hspace{-.21in}\big(f(\bar{u}(x,t))\big)&=0,\label{solves_equation}\\
\partial_t\bigg|_{(x,t)}\hspace{-.21in}\big(\eta(\bar{u}(x,t))\big)+\partial_x\bigg|_{(x,t)}\hspace{-.21in}\big(q(\bar{u}(x,t))\big)&=0.\label{solves_entropy}
\end{align}

Thus for any Lipschitz continuous function $X: [0,T)\to\mathbb{R}$ with $X(0)=0$  we have on the complement of the set $\{(x,t)\in\mathbb{R}\times[0,T) | x=s(t)-X(t)\}$, 
\begin{equation}
\begin{aligned}\label{solves_equation_shift}
\partial_t\bigg|_{(x,t)}\hspace{-.21in}&\big(\bar{u}(x+X(t),t)\big)+\partial_x\bigg|_{(x,t)}\hspace{-.21in}\big(f(\bar{u}(x+X(t),t))\big)=
\\
&\hspace{1.5in}\Bigg(\partial_x\bigg|_{(x+X(t),t)}\hspace{-.45in}\big(\bar{u}(x,t)\big)\Bigg)\dot{X}(t),
\end{aligned}
\end{equation}
and
\begin{equation}
\begin{aligned}\label{solves_entropy_shift}
\partial_t\bigg|_{(x,t)}\hspace{-.21in}&\big(\eta(\bar{u}(x+X(t),t))\big)+\partial_x\bigg|_{(x,t)}\hspace{-.21in}\big(q(\bar{u}(x+X(t),t))\big)=
\\
&\nabla\eta(\bar{u}(x+X(t),t))\Bigg(\partial_x\bigg|_{(x+X(t),t)}\hspace{-.45in}\big(\bar{u}(x,t)\big)\Bigg)\dot{X}(t).
\end{aligned}
\end{equation}

We can now imitate the weak-strong stability proof in \cite[p.~122-5]{dafermos_big_book}, using \eqref{solves_equation_shift} and \eqref{solves_entropy_shift} instead of \eqref{solves_equation} and \eqref{solves_entropy}. This gives \eqref{combined1}. For more details, the reader can refer to \cite{move_entire_solution_system}, where the  computation is done under the additional assumption that the system \eqref{system} has a source term $G$. Due to considering the source term $G$, the work \cite{move_entire_solution_system} assumes that the entropy $\eta \in C^3(\mathcal{V})$, but the computations go through unchanged if we take  $G\equiv 0$ and $\eta \in C^2(\mathcal{V})$.

\hfill\break
\uline{Step 2}
\hfill\break

We will now test \eqref{combined1} with some particular test functions. 
The rest of the proof of \Cref{local_entropy_dissipation_rate_systems_RP} is decomposed into two cases: 
\hfill\break
\emph{Case 1} $t_0=0$, $h_1(t_0)<h_2(t_0)$ 
\hfill\break
and 
\hfill\break
\emph{Case 2} $h_1(t_0)=h_2(t_0)$ 
\hfill\break
\hfill\break
We start with \emph{Case 1}.
\hfill \break
\emph{Case 1} $t_0=0$, $h_1(t_0)<h_2(t_0)$
\hfill \break

Choose $t^*\in(t_0,t_1)$.

Define 
\begin{align}
\delta\coloneqq \inf_{t\in\big[t_0,t^*+\frac{t_1-t^*}{2}\big]}(h_2(t)-h_1(t)).
\end{align}
Note $\delta>0$.

Choose $0<\epsilon<\min\{\frac{1}{2}\delta,\frac{t_1-t^*}{2}\}$.

We apply the test function $\omega^0(t)\chi(x,t)$ to \eqref{combined1}, where

\begin{align}
 \omega^0(t)\coloneqq
  \begin{cases}
   1 & \text{if } 0\leq t< t^*\\
   \frac{1}{\epsilon}(t^*-t)+1 & \text{if } t^*\leq t < t^*+\epsilon\\
   0 & \text{if } t^*+\epsilon \leq t.
  \end{cases}
\end{align}
and
\begin{align}\label{chi_RP}
 \chi(x,t)\coloneqq
  \begin{cases}
   0 & \text{if } x<h_1(t)\\
   \frac{1}{\epsilon}(x-h_1(t)) & \text{if } h_1(t)\leq x < h_1(t)+\epsilon\\
   1 & \text{if } h_1(t)+\epsilon\leq x \leq h_2(t) -\epsilon\\
   -\frac{1}{\epsilon}(x-h_2(t)) & \text{if } h_2(t)-\epsilon<x\leq h_2(t)\\
   0 & \text{if } h_2(t)<x.
  \end{cases}
\end{align}

The function $\omega^0$ is modeled from \cite[p.~124]{dafermos_big_book}. The function $\chi$ is from \cite[p.~765]{Leger2011_original}. 

We receive,
\begin{equation}
\begin{aligned}\label{local_plugged_test_new_case}
&\int\limits_{0}^{t^*} \Bigg[-\int\limits_{h_1(t)}^{h_1(t)+\epsilon}\frac{1}{\epsilon}\dot{h}_1(t)\eta(u(x,t)|\bar{u}(x+X(t),t))\,dx
+\int\limits_{h_1(t)}^{h_1(t)+\epsilon}\frac{1}{\epsilon}q(u(x,t);\bar{u}(x+X(t),t))\,dx
\\
&\hspace{.5in}+\int\limits_{h_2(t)-\epsilon}^{h_2(t)}\frac{1}{\epsilon}\dot{h}_2(t)\eta(u(x,t)|\bar{u}(x+X(t),t))\,dx-\int\limits_{h_2(t)-\epsilon}^{h_2(t)}\frac{1}{\epsilon}q(u(x,t);\bar{u}(x+X(t),t))\,dx\Bigg]\,dt
\\
&\hspace{.5in}+
\int\limits_{h_1(0)}^{h_2(0)}\eta(u^0(x)|\bar{u}^0(x))\,dx
-
\int\limits_{t^*}^{t^*+\epsilon}\frac{1}{\epsilon}\int\limits_{h_1(t)}^{h_2(t)}\eta(u(x,t)|\bar{u}(x+X(t),t))\,dxdt
+\mathcal{O}(\epsilon)
\\
&\hspace{2in}\geq
\int\limits_{0}^{t^*}\int\limits_{h_1(t)}^{h_2(t)}\mbox{RHS}\,dxdt,
\end{aligned}
\end{equation}
where RHS represents everything being multiplied by $\phi$ in the integral on the right hand side of \eqref{combined1}. 

Recall the convexity of $\eta$. Furthermore, remark that for weak solutions $u$ to \eqref{system}, the map $t\mapsto u(\cdot,t)$ is continuous in $L^\infty$ weak-* . Thus, from these two facts we have the following lower-semicontinuity property for $r\in[0,T)$:
\begin{align}\label{semicontinuity_eta_RP} 
\int\limits_{h_1(r)}^{h_2(r)}\eta(u(x,r)|\bar{u}(x+X(r),r))\,dx \leq \liminf_{s\to r} \int\limits_{h_1(s)}^{h_2(s)}\eta(u(x,s)|\bar{u}(x+X(s),s))\,dx.
\end{align}

Let $\epsilon\to0$ in \eqref{local_plugged_test_new_case}.

We use the dominated convergence, the Lebegue differentiation theorem, and recall that $u$ satisfies the strong trace property (\Cref{strong_trace_definition}). This yields,

\begin{equation}
\begin{aligned}\label{local_plugged_test_omega_0_3_dropped}
&\int\limits_{t_0}^{t^*} \bigg[q(u(h_1(t)+,t);\bar{u}((h_1(t)+X(t))+,t))-q(u(h_2(t)-,t);\bar{u}((h_2(t)+X(t))-,t))
\\
&\hspace{.4in}+\dot{h}_2(t)\eta(u(h_2(t)-,t)|\bar{u}((h_2(t)+X(t))-,t))\\
&\hspace{.4in}-\dot{h}_1(t)\eta(u(h_1(t)+,t)|\bar{u}((h_1(t)+X(t))+,t))\bigg]\,dt
+\int\limits_{h_1(0)}^{h_2(0)}\eta(u^0(x)|\bar{u}^0(x))\,dx
\\
&\hspace{2in}\geq
\int\limits_{h_1(t^*)}^{h_2(t^*)}\eta(u(x,t^*)|\bar{u}(x+X(t^*),t^*))\,dx
+
\int\limits_{t_0}^{t^*}\int\limits_{h_1(t)}^{h_2(t)}\mbox{RHS}\,dxdt,
\end{aligned}
\end{equation}
where we used \eqref{semicontinuity_eta_RP} to take the limit of the term 
\begin{align}
\int\limits_{t^{*}}^{t^{*}+\epsilon}\frac{1}{\epsilon}\int\limits_{h_1(t)}^{h_2(t)}\eta(u(x,t)|\bar{u}(x+X(t),t))\,dxdt
\end{align}
for every $t^{*}$ and not just almost every $t^{*}$.

We let $t^{*}\to t_1$ in \eqref{local_plugged_test_omega_0_3_dropped}. Recall the dominated convergence theorem, and again use \eqref{semicontinuity_eta_RP} to handle the term
\begin{align}
\int\limits_{h_1(t^*)}^{h_2(t^*)}\eta(u(x,t^*)|\bar{u}(x+X(t^*),t^*))\,dx
\end{align}

This yields \eqref{local_compatible_dissipation_calc_RP}.

\hfill \break
\emph{Case 2} $h_1(t_0)=h_2(t_0)$
\hfill \break

Choose $t^*,t^{**}\in(t_0,t_1)$ with $t^{**}<t^*$.

Define 
\begin{align}
\delta\coloneqq \inf_{t\in\big[t^{**},t^*+\frac{t_1-t^*}{2}\big]}(h_2(t)-h_1(t)).
\end{align}
Note $\delta>0$. 

Choose $0<\epsilon<\min\{\frac{1}{2}\delta,\frac{t_1-t^*}{2}\}$.

We repeat the above calculations, but instead of using $\omega^0$, we use $\omega$:
\begin{align}
 \omega(t)\coloneqq
  \begin{cases}
     0 & \text{if } 0\leq t < t^{**}\\
   \frac{1}{\epsilon}(t-t^{**}) & \text{if } t^{**}\leq t < t^{**}+\epsilon\\
   1 & \text{if }t^{**}+\epsilon \leq t< t_1\\
   \frac{1}{\epsilon}(t_1-t)+1 & \text{if } t_1\leq t < t_1+\epsilon\\
   0 & \text{if } t_1+\epsilon \leq t.
  \end{cases}
\end{align}


The function $\omega$ is from \cite[p.~765]{Leger2011_original}.

The function $\chi$ \eqref{chi_RP} is used exactly as it is.

We test \eqref{combined1} with the test function $\omega(t)\chi(x,t)$. This gives us,
\begin{equation}
\begin{aligned}\label{local_plugged_test_omega_0_case4}
\int\limits_{t^{**}}^{t^*} \Bigg[-\int\limits_{h_1(t)}^{h_1(t)+\epsilon}\frac{1}{\epsilon}\dot{h}_1(t)\eta(u(x,t)|\bar{u}(x+X(t),t))\,dx+
\int\limits_{h_1(t)}^{h_1(t)+\epsilon}\frac{1}{\epsilon}q(u(x,t);\bar{u}(x+X(t),t))\,dx
\\
+\int\limits_{h_2(t)-\epsilon}^{h_2(t)}\frac{1}{\epsilon}\dot{h}_2(t)\eta(u(x,t)|\bar{u}(x+X(t),t))\,dx-\int\limits_{h_2(t)-\epsilon}^{h_2(t)}\frac{1}{\epsilon}q(u(x,t);\bar{u}(x+X(t),t))\,dx\Bigg]\,dt
\\
+
\int\limits_{t^{**}}^{t^{**}+\epsilon}\frac{1}{\epsilon}\int\limits_{h_1(t)}^{h_2(t)}\eta(u(x,t)|\bar{u}(x+X(t),t))\,dxdt
-
\int\limits_{t^*}^{t^*+\epsilon}\frac{1}{\epsilon}\int\limits_{h_1(t)}^{h_2(t)}\eta(u(x,t)|\bar{u}(x+X(t),t))\,dxdt
+\mathcal{O}(\epsilon)
\\
\geq
\int\limits_{t^{**}}^{t^*}\int\limits_{h_1(t)}^{h_2(t)}\mbox{RHS}\,dxdt.
\end{aligned}
\end{equation}

Note that we can estimate
\begin{align}\label{estimate_bdy_term_RP_case4}
\abs{\int\limits_{t^{**}}^{t^{**}+\epsilon}\frac{1}{\epsilon}\int\limits_{h_1(t)}^{h_2(t)}\eta(u(x,t)|\bar{u}(x+X(t),t))\,dxdt}
\leq
C\sup_{t\in[t^{**},t^{**}+\epsilon]}\abs{h_2(t)-h_1(t)},
\end{align}
for some constant $C>0$. 

We combine \eqref{local_plugged_test_omega_0_case4}, \eqref{estimate_bdy_term_RP_case4} to get,

\begin{equation}
\begin{aligned}\label{local_plugged_test_omega_0_2_case4}
\int\limits_{t^{**}}^{t^*} \Bigg[-\int\limits_{h_1(t)}^{h_1(t)+\epsilon}\frac{1}{\epsilon}\dot{h}_1(t)\eta(u(x,t)|\bar{u}(x+X(t),t))\,dx+
\int\limits_{h_1(t)}^{h_1(t)+\epsilon}\frac{1}{\epsilon}q(u(x,t);\bar{u}(x+X(t),t))\,dx
\\
+\int\limits_{h_2(t)-\epsilon}^{h_2(t)}\frac{1}{\epsilon}\dot{h}_2(t)\eta(u(x,t)|\bar{u}(x+X(t),t))\,dx-\int\limits_{h_2(t)-\epsilon}^{h_2(t)}\frac{1}{\epsilon}q(u(x,t);\bar{u}(x+X(t),t))\,dx\Bigg]\,dt
\\
+
C\sup_{t\in[t^{**},t^{**}+\epsilon]}\abs{h_2(t)-h_1(t)}
+\mathcal{O}(\epsilon)
-
\int\limits_{t^*}^{t^*+\epsilon}\frac{1}{\epsilon}\int\limits_{h_1(t)}^{h_2(t)}\eta(u(x,t)|\bar{u}(x+X(t),t))\,dxdt
\\
\geq
\int\limits_{t^{**}}^{t^*}\int\limits_{h_1(t)}^{h_2(t)}\mbox{RHS}\,dxdt.
\end{aligned}
\end{equation}

Let $\epsilon\to0$ in \eqref{local_plugged_test_omega_0_2_case4}.

We again use the dominated convergence, the Lebegue differentiation theorem, and recall that $u$ satisfies the strong trace property (\Cref{strong_trace_definition}). This yields,

\begin{equation}
\begin{aligned}\label{local_plugged_test_omega_0_3_case4}
&\int\limits_{t^{**}}^{t^*} \bigg[q(u(h_1(t)+,t);\bar{u}((h_1(t)+X(t))+,t))-q(u(h_2(t)-,t);\bar{u}((h_2(t)+X(t))-,t))
\\
&\hspace{1in}+\dot{h}_2(t)\eta(u(h_2(t)-,t)|\bar{u}((h_2(t)+X(t))-,t))
\\
&\hspace{1in}-\dot{h}_1(t)\eta(u(h_1(t)+,t)|\bar{u}((h_1(t)+X(t))+,t))\bigg]\,dt
+C\abs{h_2(t^{**})-h_1(t^{**})}\\
&\hspace{1in}\geq
\int\limits_{h_1(t^{*})}^{h_2(t^{*})}\eta(u(x,t^{*})|\bar{u}(x+X(t^{*}),t^{*}))\,dx
+
\int\limits_{t^{**}}^{t^*}\int\limits_{h_1(t)}^{h_2(t)}\mbox{RHS}\,dxdt,
\end{aligned}
\end{equation}
where we used \eqref{semicontinuity_eta_RP} to take the limit of the term
\begin{align}
\int\limits_{t^*}^{t^*+\epsilon}\frac{1}{\epsilon}\int\limits_{h_1(t)}^{h_2(t)}\eta(u(x,t)|\bar{u}(x+X(t),t))\,dxdt
\end{align}
for every $t^{*}$ and not just almost every $t^{*}$.

We let $t^{**}\to t_0^+$ in \eqref{local_plugged_test_omega_0_3_case4}, recalling the dominated convergence theorem.

This gives,
\begin{equation}
\begin{aligned}\label{local_plugged_test_omega_0_3_case4_1}
&\int\limits_{t_0}^{t^*} \bigg[q(u(h_1(t)+,t);\bar{u}((h_1(t)+X(t))+,t))-q(u(h_2(t)-,t);\bar{u}((h_2(t)+X(t))-,t))
\\
&\hspace{1in}+\dot{h}_2(t)\eta(u(h_2(t)-,t)|\bar{u}((h_2(t)+X(t))-,t))
\\
&\hspace{1in}-\dot{h}_1(t)\eta(u(h_1(t)+,t)|\bar{u}((h_1(t)+X(t))+,t))\bigg]\,dt
\\
&\hspace{1in}\geq
\int\limits_{h_1(t^{*})}^{h_2(t^{*})}\eta(u(x,t^{*})|\bar{u}(x+X(t^{*}),t^{*}))\,dx +
\int\limits_{t^{**}}^{t^*}\int\limits_{h_1(t)}^{h_2(t)}\mbox{RHS}\,dxdt.
\end{aligned}
\end{equation}

Finally, we let $t^*\to t_1^-$ in \eqref{local_plugged_test_omega_0_3_case4_1}. We recall again the dominated convergence theorem and \eqref{semicontinuity_eta_RP}.

We receive \eqref{local_compatible_dissipation_calc_RP}.

This completes the proof of \Cref{local_entropy_dissipation_rate_systems_RP}.

\end{proof}

\begin{corollary}
\label{corollary_dissipation_RP}
Let $u,\bar{u}\in L^\infty(\mathbb{R}\times[0,T))$ be weak solutions to \eqref{system}. 
Assume that $u$ and $\bar{u}$ are entropic for the entropy $\eta$. Assume that $\bar{u}$ is Lipschitz continuous on $\{(x,t)\in\mathbb{R}\times[0,T) | x<s(t)\}$ and on $\{(x,t)\in\mathbb{R}\times[0,T) | x>s(t)\}$, where $s:[0,T)\to\mathbb{R}$ is a Lipschitz function . Assume also that $u$ verifies the strong trace property (\Cref{strong_trace_definition}).  Let $T,t_1\in\mathbb{R}$ verify $0<t_1 <T$. Let $h_1,h_2, X:[0,T)\to\mathbb{R}$ be Lipschitz continuous. We require that
\begin{itemize}
\item \begin{align}\label{ordering_shifts_required}
h_1(t)\leq h_2(t)
\end{align}
 for all $t\in[0,T)$,
\item
if $h_1(t)=h_2(t)$ for some $t\in[0,T)$, and $h_1$ and $h_2$ are both differentiable at $t$, then 
\begin{align}\label{derivs_ordered_RP}
\dot{h}_1(t)< \dot{h}_2(t).
\end{align}
\end{itemize}

Assume also that for all $t\in[0,t_1]$, $s(t)-X(t)$ is not in the open set $(h_1(t),h_2(t))$.

Then,
\begin{equation}
\begin{aligned}\label{local_compatible_dissipation_calc_RP_cor}
&\int\limits_{0}^{t_1} \bigg[q(u(h_1(t)+,t);\bar{u}((h_1(t)+X(t))+,t))-q(u(h_2(t)-,t);\bar{u}((h_2(t)+X(t))-,t))
\\
&\hspace{.5in}+\dot{h}_2(t)\eta(u(h_2(t)-,t)|\bar{u}((h_2(t)+X(t))-,t))
\\
&\hspace{.5in}-\dot{h}_1(t)\eta(u(h_1(t)+,t)|\bar{u}((h_1(t)+X(t))+,t))\bigg]\,dt
\\
&\hspace{.5in}\geq
\int\limits_{h_1(t_1)}^{h_2(t_1)}\eta(u(x,t_1)|\bar{u}(x+X(t_1),t_1))\,dx
-\int\limits_{h_1(0)}^{h_2(0)}\eta(u^0(x)|\bar{u}^0(x))\,dx
\\
&\hspace{.5in}+\int\limits_{0}^{t_1}\int\limits_{h_1(t)}^{h_2(t)}\Bigg(\partial_x \bigg|_{(x+X(t),t)}\hspace{-.45in} \nabla\eta(\bar{u}(x,t))\Bigg) f(u(x,t)|\bar{u}(x+X(t),t))
\\
&\hspace{.5in}+\Bigg(2\partial_x\bigg|_{(x+X(t),t)}\hspace{-.45in}\bar{u}^T(x,t)\dot{X}(t)\Bigg)\nabla^2\eta(\bar{u}(x+X(t),t))[u(x,t)-\bar{u}(x+X(t),t)]\,dxdt.
\end{aligned}
\end{equation}
\end{corollary}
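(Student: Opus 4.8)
The plan is to deduce the corollary from Lemma \ref{local_entropy_dissipation_rate_systems_RP} by a partitioning argument in time. The only gap between the hypotheses here and those of the lemma is that the lemma requires $h_1(t)<h_2(t)$ on an \emph{open} interval $(t_0,t_1)$ with $h_1(t_0)=h_2(t_0)$ (or $t_0=0$), whereas here $h_1$ and $h_2$ may coincide on a small set — but only where $\dot h_1<\dot h_2$, so the coincidence set is ``instantaneous.'' First I would analyze the structure of the set $Z\coloneqq\{t\in[0,t_1]:h_1(t)=h_2(t)\}$. Since $g\coloneqq h_2-h_1$ is Lipschitz, nonnegative, and at every point of $Z$ where $g$ is differentiable we have $\dot g(t)=\dot h_2(t)-\dot h_1(t)>0$, every such point is isolated in $Z$ from the right: $g>0$ on $(t,t+\delta)$ for some $\delta>0$. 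Combined with Lipschitzness (so $g$ is differentiable a.e.\ and the ``bad'' non-differentiability points of $g$ form a null set, but one still has to rule out accumulation), one shows $Z$ has empty interior and, more importantly, $[0,t_1]\setminus Z$ is a countable union of open intervals $(a_k,b_k)$ on each of which $g>0$, with $g(a_k)=0$ (unless $a_k=0$) and $g(b_k)=0$ (unless $b_k=t_1$). The key structural fact I need is that $Z$ is at most countable, or at least that the endpoints $\{a_k\}$ accumulate only at finitely many points, so that I can sum the lemma's estimate over the intervals.

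Next, on each component interval $(a_k,b_k)$ I apply Lemma \ref{local_entropy_dissipation_rate_systems_RP} with $t_0=a_k$ and $t_1=b_k$: the hypotheses hold because $g>0$ on $(a_k,b_k)$, and either $a_k=0$ (the $t_0=0$ case of the lemma) or $g(a_k)=0$ i.e.\ $h_1(a_k)=h_2(a_k)$ (so the boundary integral $\int_{h_1(a_k)}^{h_2(a_k)}\eta(u^0|\bar u^0)\,dx$ vanishes, matching the lemma's second remark). This yields \eqref{local_compatible_dissipation_calc_RP} with $t_0,t_1$ replaced by $a_k,b_k$. Then I sum over $k$. On the left-hand side, the flux/dissipation time-integrands are integrable (everything is bounded: $u,\bar u\in L^\infty$, the traces exist by the strong trace property, the $h_i$ and $X$ are Lipschitz hence $\dot h_i,\dot X\in L^\infty$), so $\sum_k\int_{a_k}^{b_k}=\int_{[0,t_1]\setminus Z}=\int_0^{t_1}$ since $Z$ is null. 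On the right-hand side, the $x$-integrals $\int_{h_1(t)}^{h_2(t)}(\cdots)\,dx$ vanish identically for $t\in Z$ (empty domain of integration), so again the sum over $k$ of $\int_{a_k}^{b_k}\int_{h_1(t)}^{h_2(t)}$ equals $\int_0^{t_1}\int_{h_1(t)}^{h_2(t)}$; and the telescoping boundary terms $\int_{h_1(b_k)}^{h_2(b_k)}\eta(\cdots)\,dx-\int_{h_1(a_k)}^{h_2(a_k)}\eta(\cdots)\,dx$ collapse: every interior endpoint contributes $0$ (since $h_1=h_2$ there), leaving only $\int_{h_1(t_1)}^{h_2(t_1)}\eta(\cdots)\,dx$ at the top and $-\int_{h_1(0)}^{h_2(0)}\eta(u^0|\bar u^0)\,dx$ at the bottom, which is exactly the right-hand side of \eqref{local_compatible_dissipation_calc_RP_cor}.

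The main obstacle I expect is the measure-theoretic bookkeeping around the set $Z$: rigorously justifying that $[0,t_1]\setminus Z$ decomposes into countably many intervals with the right endpoint behavior, that $Z$ is Lebesgue-null, and that the sum over the (possibly infinitely many) components of the boundary terms genuinely telescopes — i.e.\ that there is no ``Cantor-like'' residual. The condition \eqref{derivs_ordered_RP} is precisely what prevents $g$ from having a flat zero on a positive-measure set or a complicated zero set: at every differentiability point of $g$ in $Z$, $\dot g>0$ forces $g$ to immediately become positive, so $Z$ cannot contain an interval, and a short argument (using that $g$ is Lipschitz, so differentiable a.e., together with the fact that if $g(t)=0=g(t')$ with $t<t'$ then $g$ attains a positive max in between with derivative-zero points that must be \emph{outside} $Z$) shows $Z$ is closed with empty interior and — since one can associate to each component interval of the complement a rational — at most countably many components. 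If one wants to be maximally careful, an alternative is to prove the corollary first under the extra assumption that $Z$ is finite (which covers all applications in the paper, where the $h_i$ are built from finitely many generalized characteristics) and remark that the general case follows by the same argument; I would mention this as a fallback but carry out the countable version since it is not much harder.
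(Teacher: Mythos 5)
Your proposal is correct and follows essentially the same route as the paper: decompose $\{t\in(0,t_1):h_1(t)\neq h_2(t)\}$ into countably many disjoint open intervals, observe that \eqref{ordering_shifts_required} and \eqref{derivs_ordered_RP} force the coincidence set to be Lebesgue-null (since $g=h_2-h_1\geq 0$ cannot vanish at an interior differentiability point with $\dot g>0$, and Lipschitz functions are differentiable a.e.), apply \Cref{local_entropy_dissipation_rate_systems_RP} on each component (where the initial boundary term vanishes because $h_1=h_2$ at interior endpoints), and sum, with the boundary terms telescoping exactly as you describe.
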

\begin{remark}
This corollary says that the dissipation rate formula \eqref{local_compatible_dissipation_calc_RP} holds if $h_1(t)=h_2(t)$ for only a small number of $t$ values (see \Cref{shifts_repeatedly_touching_fig}).
\end{remark}

\begin{figure}[tb]
      \includegraphics[width=0.5\textwidth]{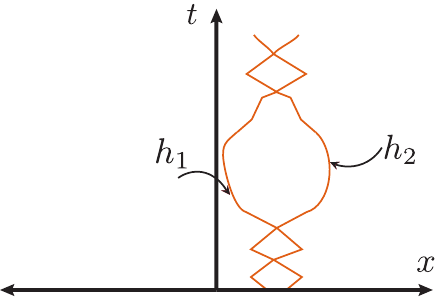}
  \caption{\Cref{corollary_dissipation_RP} allows us to consider shift functions $h_1$ and $h_2$ which occasionally touch as shown.}\label{shifts_repeatedly_touching_fig}
\end{figure}

\begin{proof}
\uline{Step 1}

Note that \eqref{ordering_shifts_required} and \eqref{derivs_ordered_RP} imply that $h_1(t)=h_2(t)$ will not occur for $t$ values where both $h_1$ and $h_2$ are differentiable. Thus the set
\begin{align}
\{\hspace{.03in}t\hspace{.03in}|h_1(t)=h_2(t)\}
\end{align}
is measure zero because Lipschitz continuous functions are differentiable almost everywhere.

\uline{Step 2}

Remark that 
\begin{align}\label{open_set_rp}
\{t\in(0,t_1)|h_1(t)\neq h_2(t)\}
\end{align}
is an open subset of $\mathbb{R}$.

Thus, we can write \eqref{open_set_rp} as a union of at most countably many disjoint open intervals:
\begin{align}\label{open_set_rp2}
\{t\in(0,t_1)|h_1(t)\neq h_2(t)\}=\bigcup_{i\in\Lambda} (x_i,y_i),
\end{align}
where $\Lambda$ is an at most countable index set, and $x_i,y_i\in\mathbb{R}$, $x_i\neq y_i$.

We now show the following two claims:

\begin{align}
\begin{cases}\label{claim_1_rp}
\mbox{If $h_1(t_1)\neq h_2(t_1)$, then there exists $i\in\Lambda$ such that}
\\
\mbox{the open interval $(x_i,y_i)$ is equal to the open interval $(x_i,t_1)$.}
\\
\mbox{Further, if $h_1(y_i)\neq h_2(y_i)$ for some $i\in\Lambda$, then $y_i=t_1$.}
\end{cases}
\end{align}

and 

\begin{align}
\begin{cases}\label{claim_2_rp}
\mbox{If $h_1(0)\neq h_2(0)$, then there exists $i\in\Lambda$ such that}
\\
\mbox{the open interval $(x_i,y_i)$ is equal to the open interval $(0,y_i)$.}
\\
\mbox{Further, if $h_1(x_i)\neq h_2(x_i)$ for some $i\in\Lambda$, then $x_i=0$.}
\end{cases}
\end{align}

The proofs of \eqref{claim_1_rp} and \eqref{claim_2_rp} are similar. We will only show \eqref{claim_1_rp}:

If $h_1(t_1)\neq h_2(t_1)$, then  by continuity of $h_1,h_2$ there exists $\alpha\in[0,t_1)$ such that $h_1(t)\neq h_2(t)$ for all $t$ in the open interval $(\alpha,t_1)$, with either $h_1(\alpha)=h_2(\alpha)$ or $\alpha=0$. By \eqref{open_set_rp2}, we must have 
\begin{align}
(\alpha,t_1)\subseteq \bigcup_{i\in\Lambda} (x_i,y_i).
\end{align}

Consider $i\in\Lambda$ such that $(x_i,y_i)\cap(\alpha,t_1)\neq \varnothing$. Then if $y_i < t_1$, we have a contradiction to the fact that the open intervals $(x_i,y_i)$ are disjoint. This proves the first part of \eqref{claim_1_rp}.

Assume now that $h_1(y_i)\neq h_2(y_i)$ for some $i\in\Lambda$. By  definition \eqref{open_set_rp2}, $y_i\leq t_1$. If $y_i < t_1$, then by continuity of $h_1,h_2$, there exists $\epsilon>0$ such that $h_1(t)\neq h_2(t)$ for all $t\in[y_i,y_i+\epsilon)$. This contradicts that the open intervals $(x_i,y_i)$ are disjoint. Recall also that $x_i\neq y_i$ for all $i$. Thus, we conclude that $y_i=t_1$.

This proves \eqref{claim_1_rp}.

\uline{Step 3}

For each $i\in\Lambda$, we apply \eqref{local_compatible_dissipation_calc_RP_1} to the time interval $(x_i,y_i)$. Note that we can do this because by \eqref{claim_2_rp}, $h_1(x_i)=h_n(x_i)$ whenever $x_i\neq0$. This gives,

\begin{equation}
\begin{aligned}\label{local_compatible_dissipation_calc_RP_1_1}
&\int\limits_{x_i}^{y_i} \bigg[q(u(h_1(t)+,t);\bar{u}((h_1(t)+X(t))+,t))-q(u(h_2(t)-,t);\bar{u}((h_2(t)+X(t))-,t))
\\
&\hspace{.2in}+\dot{h}_2(t)\eta(u(h_2(t)-,t)|\bar{u}((h_2(t)+X(t))-,t))
\\
&\hspace{.2in}-\dot{h}_1(t)\eta(u(h_1(t)+,t)|\bar{u}((h_1(t)+X(t))+,t))\bigg]\,dt
\\
&\hspace{.2in}\geq
\int\limits_{h_1(y_i)}^{h_2(y_i)}\eta(u(x,y_i)|\bar{u}(x+X(y_i),y_i))\,dx
-\int\limits_{h_1(x_i)}^{h_2(x_i)}\eta(u^0(x)|\bar{u}^0(x))\,dx
\\
&\hspace{.2in}+\int\limits_{x_i}^{y_i}\int\limits_{h_1(t)}^{h_2(t)}\Bigg(\partial_x \bigg|_{(x+X(t),t)}\hspace{-.45in} \nabla\eta(\bar{u}(x,t))\Bigg) f(u(x,t)|\bar{u}(x+X(t),t))
\\
&\hspace{.2in}+\Bigg(2\partial_x\bigg|_{(x+X(t),t)}\hspace{-.45in}\bar{u}^T(x,t)\dot{X}(t)\Bigg)\nabla^2\eta(\bar{u}(x+X(t),t))[u(x,t)-\bar{u}(x+X(t),t)]\,dxdt,
\end{aligned}
\end{equation}
where when we write the term
\begin{align}
\int\limits_{h_1(x_i)}^{h_2(x_i)}\eta(u^0(x)|\bar{u}^0(x))\,dx
\end{align}
we have again used that $h_1(x_i)=h_2(x_i)$ for $x_i\neq 0$, in which case this term vanishes.

We then sum both sides of the inequality \eqref{local_compatible_dissipation_calc_RP_1_1} over all $i\in\Lambda$. Recall that the set 
\begin{align}
\{\hspace{.03in}t\hspace{.03in}|h_1(t)=h_2(t)\}
\end{align}
has measure zero. Recall also \eqref{claim_1_rp} and \eqref{claim_2_rp}. Further, recall that the intervals $(x_i,y_i)$ are disjoint. Lastly, note that terms of the form
\begin{align}
\int\limits_{h_1(t)}^{h_2(t)}\eta(u|\bar{u})\,dx
\end{align}
equal zero when $h_1(t)=h_2(t)$.
 
This proves \eqref{local_compatible_dissipation_calc_RP_cor}.
\end{proof}

\section{Construction of the shift}\label{construction_of_the_shift}

In this section, we prove

\begin{proposition}[Existence of the shift functions]\label{systems_entropy_dissipation_room}

Fix $T>0$.   Assume $u$ is a weak solution to \eqref{system}. Assume $u$ is entropic for the entropy $\eta$, and $u$ has strong traces (\Cref{strong_trace_definition}).

Let $(u_{L,1},u_{R,1},\sigma^1(u_{L,1},u_{R,1}))$ be a 1-shock verifying the hypotheses $(\mathcal{H})$ and let 

$(u_{L,n},u_{R,n},\sigma^n(u_{L,n},u_{R,n}))$ be an n-shock verifying  the hypotheses $(\mathcal{H})^*$.

Assume also that there exists $\rho>0$ such that 
\begin{align}\label{gap_local_case}
r_i>\rho,
\end{align}
for $i=1$ and $i= n$ and where $r_i$ satisfies $S^1_{u_{L,i}}(r_i)=u_{R,i}$.

Then, there exist positive constants $a_{1,*},a_{n,*}$ such that for all $a_1\in(0,a_{1,*})$ and all $a_n\in(a_{n,*},\infty)$, there are Lipschitz continuous maps $h_1,h_n: [0,T)\to\mathbb{R}$ with $h_1(0)=h_n(0)=0$  such that for almost every $t$,
\begin{equation}
\begin{aligned}\label{dissipation_negative_claim}
a_1\big(q(u^1_+;u_{R,1})&-\dot{h}_1(t)\eta(u^1_+|u_{R,1})\big)-q(u^1_-;u_{L,1})+\dot{h}_1(t)\eta(u^1_-|u_{L,1}) \leq \\
& -c_1 \abs{\sigma^1(u_{L,1},u_{R,1})-\dot{h}_1(t)}^2,
\end{aligned}
\end{equation}
and 
\begin{equation}
\begin{aligned}\label{dissipation_negative_claim_n}
\frac{1}{a_n}\big(q(u^n_+;u_{R,n})&-\dot{h}_n(t)\eta(u^n_+|u_{R,n})\big)-q(u^n_-;u_{L,n})+\dot{h}_n(t)\eta(u^n_-|u_{L,n}) \leq \\
& -c_n \abs{\sigma^n(u_{L,n},u_{R,n})-\dot{h}_n(t)}^2,
\end{aligned}
\end{equation}
where $u_{\pm}^i\coloneqq u(u(h_i(t)\pm,t)$ for $i=1,n$. The constants $c_i>0$ depend on $\norm{u}_{L^\infty}$, $\rho$, $\abs{u_{R,i}}$, $\abs{u_{L,i}}$, and $a_i$. The constants $a_{i,*}$ depend on $\norm{u}_{L^\infty}$, $\abs{u_{L,i}},\abs{u_{R,i}}$, and $\abs{u_{R,i}-u_{L,i}}$, for $i=1,n$.

For each  $t\in[0,T)$ either $\dot{h}_1(t)<\inf\lambda_1$ or $(u^1_+,u^1_-,\dot{h}_1)$ is a 1-shock with $u^1_-\in \{u | \eta(u|u_{L,1})\leq a_1\eta(u|u_{R,1})\}$ (possibly $u^1_+=u^1_-$ and $\dot{h}_1=\lambda_1(u^1_\pm)$). Similarly, for each  $t\in[0,T)$ either $\dot{h}_n(t)>\sup\lambda_n$ or $(u^n_+,u^n_-,\dot{h}_n)$ is an n-shock with $u^n_+\in \{u | \eta(u|u_{R,n})\leq a_n\eta(u|u_{L,n})\}$ (possibly $u^n_+=u^n_-$ and $\dot{h}_n=\lambda_n(u^n_\pm)$).

Moreover,
\begin{align}\label{ordering_h_1_h_n_RP}
h_1(t)\leq h_n(t)
\end{align}
for all $t\in[0,T)$.

\end{proposition}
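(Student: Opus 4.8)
The plan is to produce $h_1$ and $h_n$ as Filippov flows of two ``generalized-characteristic'' velocity fields, one attached to the $1$-shock $(u_{L,1},u_{R,1})$ and one to the $n$-shock $(u_{L,n},u_{R,n})$, and then to check in turn the structural dichotomy, the dissipation bounds \eqref{dissipation_negative_claim}--\eqref{dissipation_negative_claim_n}, and the ordering \eqref{ordering_h_1_h_n_RP}. First I would fix the constants: with $B:=\norm{u}_{L^\infty}$, \Cref{a_cond_lemma_itself} (applied with $u_{L,1},u_{R,1}$ in the roles of $u_L,u_R$, and to the time-reversed system for the $n$-shock) provides $a_{1,*}>0$ small enough, resp.\ $a_{n,*}>0$ large enough, that for all admissible $a_1,a_n$ the relevant $a$-neighbourhood $R^1\subset B_{r_0}(u_{L,1})$, resp.\ $R^n\subset B_{r_0}(u_{R,n})$, is a compact neighbourhood of the reference state on which $(\mathcal{H})$, resp.\ $(\mathcal{H})^*$, is in force; since each $R^i$ is a closed neighbourhood of its reference state, its complement within $\{\abs{v}\le B\}$ stays a fixed distance away from that state, so $\eta(\cdot\mid\text{reference state})\ge\kappa_i>0$ off $R^i$. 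I also fix $M>0$ exceeding $\sup_{\abs{v}\le B}(\abs{\lambda_1(v)}+\abs{\lambda_n(v)})$ and large enough to close the estimate below. The velocity fields $V_1,V_n$ are the bounded Borel maps equal to the corresponding extremal characteristic speed ($\lambda_1$, resp.\ $\lambda_n$) on $R^1$, resp.\ $R^n$, and equal to the super-characteristic constant ($-M$, resp.\ $+M$) off it. I take $h_1,h_n$ to be Filippov flows of $(t,x)\mapsto V_1(u(x,t))$ and $(t,x)\mapsto V_n(u(x,t))$ issuing from $0$; the strong-trace property (\Cref{strong_trace_definition}) makes the one-sided traces $u^i_\pm:=u(h_i(t)\pm,t)$ meaningful along the flows, which are Lipschitz with constant $\le M$. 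Existence of the \emph{pair} $(h_1,h_n)$ subject to the extra constraint $h_1\le h_n$ is the content of the Filippov-type construction \Cref{Filippov_existence_RP}, whose speed-separation hypothesis is verified at the end.

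Next, the structural dichotomy. Fix a differentiability time $t$ at which $u^1_\pm$ exist. Because $u$ is a weak solution, at a.e.\ such $t$ one has $\dot h_1(t)\in\{V_1(u^1_-),V_1(u^1_+)\}$, or else the Filippov curve rides a genuine jump of $u$ and $\dot h_1(t)$ is its Rankine--Hugoniot speed $\sigma(u^1_-,u^1_+)$, which then lies between $V_1(u^1_+)$ and $V_1(u^1_-)$. Running through these cases and invoking $(\mathcal{H}2)$ (no entropic discontinuity has speed $\lambda_1$ of its right state) and $(\mathcal{H}3)$ (an entropic discontinuity with speed $\le\lambda_1$ of its left state lies on the $1$-shock curve) leaves exactly the two announced possibilities: either $\dot h_1(t)=-M<\inf\lambda_1$ with $u^1_-\notin R^1$, or $u^1_-\in R^1$ and $(u^1_-,u^1_+,\dot h_1(t))$ is a genuine $1$-shock $u^1_+=S^1_{u^1_-}(s)$, $\dot h_1(t)=\sigma^1_{u^1_-}(s)$, the degenerate instance being $u^1_+=u^1_-$, $\dot h_1=\lambda_1(u^1_\pm)$. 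The dual statement for $h_n$ follows from $(\mathcal{H}2)^*,(\mathcal{H}3)^*$.

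Then, the dissipation bounds; I describe \eqref{dissipation_negative_claim}, with \eqref{dissipation_negative_claim_n} dual. In the super-characteristic branch ($u^1_-\notin R^1$, $\dot h_1=-M$) the left side of \eqref{dissipation_negative_claim} equals $a_1q(u^1_+;u_{R,1})+a_1M\eta(u^1_+|u_{R,1})-q(u^1_-;u_{L,1})-M\eta(u^1_-|u_{L,1})$; bounding the relative fluxes and entropies on $\{\abs{v}\le B\}$, using $\eta(u^1_-|u_{L,1})\ge\kappa_1$, and taking $a_1$ small enough that $a_1\sup_{\abs{v}\le B}\eta(v|u_{R,1})\le\kappa_1/2$ gives an upper bound $\le C_0-\frac12 M\kappa_1$, which for $M$ large is $\le-c_1\abs{\sigma^1(u_{L,1},u_{R,1})-\dot h_1}^2$ with $c_1\sim\kappa_1/M$. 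In the $1$-shock branch ($u^1_-\in R^1$, $u^1_+=S^1_{u^1_-}(s)$), \eqref{dissipation_negative_claim} is the $a$-contraction estimate of Leger--Vasseur type \cite{Leger2011,MR3519973}: parameterising by $s$ (which stays in a compact range by $(\mathcal{H}1)$(c) and the bound on $\abs{u}$), using the Liu monotonicity $(\mathcal{H}1)$(a) and the relative-entropy growth $(\mathcal{H}1)$(b) together with \Cref{entropy_relative_L2_control_system} and the smallness of $R^1$, and drawing the uniform lower bound on the dissipation rate from the gap hypothesis \eqref{gap_local_case} ($r_1>\rho$). \textbf{This $a$-contraction computation is the main obstacle}, and is the only place where the hypotheses $(\mathcal{H})$ are used essentially; once it is in hand the constants have exactly the claimed dependencies.

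Finally, the ordering. It remains to verify the speed-separation hypothesis of \Cref{Filippov_existence_RP}: at any coincidence $x=h_1(t)=h_n(t)$ the admissible velocities of $h_1$ lie strictly below those of $h_n$. Since $V_1(v)\le\lambda_1(v)$ and $V_n(v)\ge\lambda_n(v)$ everywhere, the only way this could fail is at a genuine jump of $u$ at $x$ with traces $u^-,u^+$ and speed $\sigma$; but then $(\mathcal{H}2)$ gives $\lambda_1(u^+)<\sigma$, $(\mathcal{H}2)^*$ gives $\sigma<\lambda_n(u^-)$, and the remaining cross-comparison is controlled using strict hyperbolicity together with the disjointness of $R^1$ and $R^n$ (forced by $u_{L,1}\neq u_{R,n}$, which excludes a jump whose two traces are simultaneously governed by both shock families). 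Hence $h_1\le h_n$ for all $t$, and moreover $\dot h_1<\dot h_n$ whenever both are differentiable and equal --- which is also the hypothesis needed later to apply \Cref{corollary_dissipation_RP}. Assembling these pieces yields the proposition.
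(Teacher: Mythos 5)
Your proposal follows the same route as the paper: Filippov flows of velocity fields equal to the extremal characteristic speed on the $a$-contraction set $R_{a_1}=\{w\,|\,\eta(w|u_{L,1})\leq a_1\eta(w|u_{R,1})\}$ (resp.\ its $n$-shock analogue) and super-characteristic off it, the trace dichotomy via $(\mathcal{H}2)$--$(\mathcal{H}3)$, the quoted $a$-contraction lemma for the genuine-shock branch, and the ordering via \Cref{Filippov_existence_RP} --- for which, incidentally, you only need the pointwise gap $V_n(v)-V_1(v)\geq\lambda_n(v)-\lambda_1(v)\geq\theta$ on the compact range of $u$; the cross-comparison of traces at a jump is not a hypothesis of that lemma and your appeal to $(\mathcal{H}2)$, $(\mathcal{H}2)^*$ there is unnecessary.

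The genuine gap is in the super-characteristic branch of \eqref{dissipation_negative_claim}. There, $(\mathcal{H}2)$ forces $u^1_+=u^1_-=:v\notin R_{a_1}$, and the left side of \eqref{dissipation_negative_claim} is $a_1q(v;u_{R,1})-q(v;u_{L,1})+\dot{h}_1(t)\big(\eta(v|u_{L,1})-a_1\eta(v|u_{R,1})\big)$. The only helpful term is the last one: $\dot{h}_1\leq -M$ multiplies a bracket that is positive but \emph{not bounded below} --- it tends to $0$ as $v$ approaches $\partial R_{a_1}$ from outside --- so no choice of $M$ makes this product dominate the $O(1)$ flux terms uniformly. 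Your proposed remedy, namely $\eta(v|u_{L,1})\geq\kappa_1$ off $R_{a_1}$ together with choosing $a_1$ so that $a_1\sup_{\abs{w}\leq B}\eta(w|u_{R,1})\leq\kappa_1/2$, cannot be arranged: on $\partial R_{a_1}$ one has $\eta(\cdot|u_{L,1})=a_1\eta(\cdot|u_{R,1})$, whence $\kappa_1\leq a_1\sup_{\abs{w}\leq B}\eta(w|u_{R,1})$, and your condition would force $\kappa_1\leq\kappa_1/2$. The paper closes this branch by splitting it once more: if $\mathrm{dist}(v,R_{a_1})\geq\gamma_0$, a Taylor expansion of $\Gamma(\cdot)=\eta(\cdot|u_{L,1})-a_1\eta(\cdot|u_{R,1})$ about the nearest point of $R_{a_1}$ gives the quantitative lower bound $\Gamma(v)\geq c_4\gamma_0^2$, after which your argument works; if $\mathrm{dist}(v,R_{a_1})<\gamma_0$, one instead uses that the bracket is nonnegative and $\dot{h}_1\leq-\sup\abs{\lambda_1}$ to replace $\dot{h}_1$ by $\lambda_1(v)$, then invokes the strict negativity \eqref{dissipation_negative_boundary_of_convex_set} of the dissipation functional \emph{on} $R_{a_1}$ at speed $\lambda_1$ together with its Lipschitz continuity, choosing $\gamma_0=c_1/(2L_*)$ so the error is at most $c_1/2$. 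Some device of this second kind is indispensable; without it the super-characteristic branch does not close.
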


The proof of \Cref{systems_entropy_dissipation_room} is based on the following Lemma proved in \cite{move_entire_solution_system}.
\begin{lemma}[from \cite{move_entire_solution_system}]
\label{dissipation_negative_theorem}
Assume the hypotheses $(\mathcal{H})$ hold. 

Let $B,\rho>0$. Then there exists a constant $a_*\in(0,1)$ depending on $B$ and $\rho$ such that the following is true:

For any $a\in(0,a_*)$, there exists a constant $c_1$ depending on $B$, $\rho$, and $a$ such that
\begin{equation}
\begin{aligned}\label{dissipation_negative}
a\big(q(S^1_u(s);S^1_{u_L}(s_R))&-\sigma^1_u(s)\eta(S^1_u(s)|S^1_{u_L}(s_R))\big)-q(u;u_L)+\sigma^1_u(s)\eta(u|u_L) \leq \\
& -c_1\abs{\sigma^1_{u_L}(s_R)-\sigma^1_u(s)}^2,
\end{aligned}
\end{equation}
for all  $u_L\in\mathcal{V}$ with $\abs{u_L}\leq B$, all $u\in\{u | \eta(u|u_L)\leq a\eta(u|S^1_{u_L}(s_R))\}$, any $s\in[0,B]$, and any $s_R\in[\rho,B]$. 

Moreover,
\begin{equation}\label{dissipation_negative_boundary_of_convex_set}
a\big(q(u;S^1_{u_L}(s_R))-\lambda_1(u)\eta(u|S^1_{u_L}(s_R))\big)-q(u;u_L)+\lambda_1(u)\eta(u|u_L)\leq -c_1,
\end{equation}
for all $u\in\{u | \eta(u|u_L)\leq a\eta(u|S^1_{u_L}(s_R))\}$ and for the same constant $c_1$.

\end{lemma}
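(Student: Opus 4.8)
I would prove this (the $a$-contraction estimate at a single extremal shock, in the spirit of \cite{MR3519973}) by reducing it to the study of a scalar functional near an equilibrium configuration. Write $\sigma_0:=\sigma^1_{u_L}(s_R)$, $u_R:=S^1_{u_L}(s_R)$, $w:=S^1_u(s)$, $\sigma:=\sigma^1_u(s)$, so that the left-hand side of \eqref{dissipation_negative} is \emph{exactly} $D(u,s):=a\,F_+(u,s)-F_-(u,s)$, where
\[
F_-(u,s):=q(u;u_L)-\sigma\,\eta(u|u_L),\qquad F_+(u,s):=q(w;u_R)-\sigma\,\eta(w|u_R).
\]
The plan is to show $D(u,s)+c_1(\sigma_0-\sigma)^2\le 0$ on the compact parameter set $\{|u_L|\le B\}\times R_a\times[0,B]\times[\rho,B]$ for $a$ small and $c_1=ac$ with $c$ small. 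At $(u,s)=(u_L,s_R)$ one has $w=u_R$, $\sigma=\sigma_0$, and $q(\cdot;\cdot)=\eta(\cdot|\cdot)=0$ on the diagonal, so $F_\pm=0$ and the functional vanishes; everything reduces to showing $(u_L,s_R)$ is a nondegenerate maximum.

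\textbf{Step 1 (the diagonal $u=u_L$).} Here $F_-(u_L,s)\equiv 0$, so it suffices to bound $g(s):=F_+(u_L,s)$. Differentiating, using $\partial_a q(a;b)=(\nabla\eta(a)-\nabla\eta(b))\nabla f(a)$ and $\partial_a\eta(a|b)=\nabla\eta(a)-\nabla\eta(b)$ together with the differentiated Rankine--Hugoniot identity $[\nabla f(w)-\sigma I]\,w'(s)=\big(\tfrac{d}{ds}\sigma^1_{u_L}(s)\big)(w(s)-u_L)$, one collapses $g'(s)$ to $\big(\tfrac{d}{ds}\sigma^1_{u_L}(s)\big)\,B(s)$, where $B(s)=\eta\big(u_L\,|\,S^1_{u_L}(s)\big)-\eta(u_L|u_R)$; thus $B(s_R)=0$ and $B'(s)=\tfrac{d}{ds}\eta\big(u_L\,|\,S^1_{u_L}(s)\big)>0$ for $s>0$ by $(\mathcal{H}1)$(b), while $\tfrac{d}{ds}\sigma^1_{u_L}(s)<0$ by $(\mathcal{H}1)$(a). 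Hence $g'>0$ on $(0,s_R)$ and $g'<0$ on $(s_R,B]$, so $g$ has a strict maximum at $s_R$ with $g(s_R)=0$; therefore $g\le 0$, and $g''(s_R)=\big(\tfrac{d}{ds}\sigma^1_{u_L}(s_R)\big)B'(s_R)$ is strictly negative and bounded away from $0$ uniformly, by compactness and $s_R\ge\rho>0$. A Taylor expansion at $s_R$, $(\sigma_0-\sigma)^2\le C|s-s_R|^2$, and compactness away from $s_R$ then give $g(s)\le -2c\,(\sigma_0-\sigma^1_{u_L}(s))^2$ for a uniform $c>0$: \eqref{dissipation_negative} holds on the diagonal, with margin.

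\textbf{Step 2 (perturbation in $u$).} Since $a<a_*$, \Cref{a_cond_lemma_itself} gives $R_a\subset B_\theta(u_L)$ with $\theta\to 0$ as $a\to 0$; moreover $s_R\ge\rho$ keeps $\eta(u_L|u_R)$ bounded below, so $\eta(u|u_L)\le a\,\eta(u|u_R)$ forces $|u-u_L|^2\lesssim a$. I would show $(u_L,s_R)$ is a nondegenerate maximum of $D+c_1(\sigma_0-\sigma)^2$ for suitable $a,c$. The full gradient vanishes there: the $s$-component by Step 1, and the $u$-derivatives of $q(\cdot;u_L)$, $\eta(\cdot|u_L)$, $q(S^1_\cdot(s_R);u_R)$, $\eta(S^1_\cdot(s_R)|u_R)$ all vanish at $u_L$ since the relevant factors $\nabla\eta(a)-\nabla\eta(b)$ degenerate. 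The Hessian of $F_-$ at $(u_L,s_R)$ is block-diagonal with zero $ss$- and mixed blocks and $uu$-block $\nabla^2\eta(u_L)\,[\nabla f(u_L)-\sigma_0 I]$; this matrix is symmetric (as $\nabla^2\eta\,\nabla f$ is, by entropy compatibility) and uniformly positive definite, because $\nabla^2\eta(u_L)\ge c^* I$ (\Cref{entropy_relative_L2_control_system}) and $\nabla f(u_L)-\sigma_0 I$ has spectrum $\{\lambda_i(u_L)-\sigma_0\}$ with $\lambda_i(u_L)-\sigma_0\ge\lambda_1(u_L)-\sigma^1_{u_L}(s_R)=-\int_0^{s_R}\tfrac{d}{ds}\sigma^1_{u_L}\ge\delta(\rho,B)>0$. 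Thus $-F_-$ supplies an $O(1)$ negative-definite $uu$-block, $aF_+$ supplies only $O(a)$ terms (its $ss$-entry being $a\,g''(s_R)<0$), and $c_1(\sigma_0-\sigma)^2$ supplies the rank-one positive semidefinite term $2c_1\,\nabla\sigma\otimes\nabla\sigma$ of size $O(ac)$; choosing $a$ small and then $c$ small makes the full Hessian negative definite, uniformly on the parameter set. A Morse-type argument near $(u_L,s_R)$ (where $u\in R_a$ is automatically close to $u_L$), together with Step 1 on $\{|s-s_R|\ge\epsilon\}$ (there $g\le -\delta_\epsilon<0$ and $F_-\ge -C\theta^3$, so the functional is $<0$ once $c$ is small relative to $\delta_\epsilon$), yields $D+c_1(\sigma_0-\sigma)^2\le 0$, i.e.\ \eqref{dissipation_negative}.

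\textbf{Step 3, and the main obstacle.} Estimate \eqref{dissipation_negative_boundary_of_convex_set} is the case $s=0$ of \eqref{dissipation_negative} (then $S^1_u(0)=u$, $\sigma^1_u(0)=\lambda_1(u)$), combined with $(\sigma_0-\lambda_1(u))^2\ge\delta(\rho,B)^2>0$ for $u\in R_a$ (since $\lambda_1(u)$ is close to $\lambda_1(u_L)$ and $\lambda_1(u_L)-\sigma_0\ge\delta(\rho,B)$ by the uniform Liu inequality and $s_R\ge\rho$); replacing $c_1(\sigma_0-\lambda_1(u))^2$ by $c_1\delta^2$ gives it. The main obstacle is Step 2: upgrading the clean local (Morse-lemma) picture at $(u_L,s_R)$ into a bound that is global in $s\in[0,B]$ and $u\in R_a$, with constants uniform in $u_L$ over a ball, in $s_R\in[\rho,B]$, and in $a<a_*$. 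This forces one to invoke $(\mathcal{H}1)$(a),(b) with strictness uniform on compacts, to use the non-wrapping hypothesis $(\mathcal{H}1)$(c) so the shock curves stay proper, to track carefully that the $O(a)$ and $O(\theta^2)\sim O(a)$ error terms are absorbed rather than competing, and to make the quantitative choices of $a_*$ (small relative to $B$) and $c$ (small relative to $|g''(s_R)|/|\nabla\sigma|^2$) precise.
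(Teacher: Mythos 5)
Your overall strategy is sound and is in fact close in spirit to the proof the paper points to: the paper does not prove \Cref{dissipation_negative_theorem} itself but imports it from \cite{move_entire_solution_system}, noting that the engine is the DiPerna-type estimate \Cref{entropy_lost_right_side_1_shock}. Your Step 1 is essentially a rederivation of that estimate (restricted to the curve issuing from $u_L$) via the classical identity $g'(s)=\big(\tfrac{d}{ds}\sigma^1_{u_L}(s)\big)\big[\eta(u_L|S^1_{u_L}(s))-\eta(u_L|u_R)\big]$, and your Step 2 is the same ``perturb off the exact shock configuration, use the quadratic positivity of $q(u;u_L)-\sigma_0\eta(u|u_L)$ coming from $\sigma_0<\lambda_1(u_L)$, and take $a$ small via \Cref{a_cond_lemma_itself}'' mechanism as in Lemma 4.3 of \cite{MR3519973}. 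The linear algebra (symmetry of $\nabla^2\eta\,\nabla f$, positive definiteness of $\nabla^2\eta(u_L)[\nabla f(u_L)-\sigma_0 I]$ via Sylvester, the uniform gap $\lambda_1(u_L)-\sigma_0\ge\delta(\rho,B)$ from integrating the strict Liu inequality over $[0,s_R]\supseteq[0,\rho]$) and the reduction of \eqref{dissipation_negative_boundary_of_convex_set} to the case $s=0$ are all correct.

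There are, however, two concrete gaps in the Morse-lemma framing. First, $(\mathcal{H}1)$ only gives $C^1$ regularity of $(s,u)\mapsto S^1_u(s)$ and $(s,u)\mapsto\sigma^1_u(s)$, so the quantities $g''(s_R)$, the $ss$- and $us$-blocks of the Hessian of $F_+$, and hence the entire ``nondegenerate maximum'' argument, are not defined as stated; a negative-definite Hessian at one point does not control a function that is only $C^1$. This is repairable but changes the argument: one must use the integral representation $g(s)=\int_{s_R}^{s}\sigma'(\tau)B(\tau)\,d\tau$ with $|B(\tau)|\ge c|\tau-s_R|$ near $s_R$ (uniform by compactness and $s_R\ge\rho$) to get $g(s)\le -c(s-s_R)^2$ without ever forming $g''$, and first-order Taylor expansions in place of second-order ones. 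Second, and more seriously, near $s=s_R$ the term $a g(s)$ supplies essentially no negativity, so the off-diagonal error $a\big(F_+(u,s)-F_+(u_L,s)\big)$ must be absorbed entirely by $-F_-(u,s)\le -c\delta|u-u_L|^2$. The naive Lipschitz bound $a\,O(|u-u_L|)$ leaves, after Young's inequality, an unabsorbed residue of size $O(a^2)$, which violates \eqref{dissipation_negative} when $s$ is close to $s_R$. One needs the refined estimate $|F_+(u,s)-F_+(u_L,s)|\le C\big(|u-u_L|+|s-s_R|\big)|u-u_L|$, which follows from the vanishing of $\partial_w\big(q(w;u_R)-\sigma\eta(w|u_R)\big)$ at $w=u_R$ and $\partial_\sigma F_+=-\eta(w|u_R)=O(|w-u_R|^2)$; with this, Young's inequality with weights $\tfrac{ac}{2}|s-s_R|^2+\tfrac{C^2a}{2c}|u-u_L|^2$ closes the argument for $a$ small. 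Your Hessian picture implicitly encodes this degeneracy, but since the functions are not $C^2$ the estimate must be proved at the level of first derivatives; as written, Step 2 does not establish it.
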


\Cref{dissipation_negative_theorem} follows from the proof of Lemma 4.3 in \cite{MR3519973}, but the proof of  \Cref{dissipation_negative_theorem} (as proved in \cite{move_entire_solution_system}) keeps careful track of the dependencies on the constants and makes sure in the calculations  to leave some extra negativity in the entropy dissipation lost at the shock $(u_L,u_R,\sigma_{L,R})$ (thus we have a negative right hand side in our \eqref{dissipation_negative} and \eqref{dissipation_negative_boundary_of_convex_set}). The idea of creating negative entropy dissipation is related to the previous works \cite{2017arXiv171207348K,scalar_move_entire_solution,move_entire_solution_system}.

The proof of \Cref{dissipation_negative_theorem} is powered by \Cref{entropy_lost_right_side_1_shock}:

\begin{lemma}[from \cite{move_entire_solution_system}]\label{entropy_lost_right_side_1_shock}
Assume the system \eqref{system} satisfies the hypothesis $(\mathcal{H}1)$. Fix $B,\rho>0$. Then there exists $k,\delta_0>0$ depending on $B$ and $\rho$ such that for any $\delta\in(0,\delta_0]$, $u\in\mathcal{V}$ with $\abs{u}\leq B$ and for any $s_0\in(\rho,B)$ and $s\geq0$,
\begin{equation}
\begin{aligned}\label{entropy_lost_right_side_1_shock_inequalities}
&q(S_u^1(s);S_u^1(s_0))-\sigma_u^1(s)\eta(S_u^1(s)|S_u^1(s_0))\leq -k\abs{\sigma_u^1(s)-\sigma_u^1(s_0)}^2,\hspace{.2in}\mbox{for } \abs{s-s_0}<\delta,\\
&q(S_u^1(s);S_u^1(s_0))-\sigma_u^1(s)\eta(S_u^1(s)|S_u^1(s_0))\leq -k\delta\abs{\sigma_u^1(s)-\sigma_u^1(s_0)},\hspace{.2in}\mbox{for } \abs{s-s_0}\geq\delta.
\end{aligned}
\end{equation}
\end{lemma}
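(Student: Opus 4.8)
The plan is to reduce the whole statement to the behaviour of a single scalar function of $s$. For fixed $u\in\mathcal V$ with $|u|\le B$ and fixed $s_0\in(\rho,B)$, set
\begin{align*}
D(s)\coloneqq q(S_u^1(s);S_u^1(s_0))-\sigma_u^1(s)\,\eta(S_u^1(s)|S_u^1(s_0)),
\end{align*}
so the two claimed inequalities are exactly lower bounds on $-D(s)$. Two things are immediate: $D(s_0)=0$, because $q(\cdot\,;\cdot)$ and $\eta(\cdot|\cdot)$ vanish on the diagonal; and $D$ is $C^1$ in $s$, since $q,\eta\in C^2(\mathcal V)$ and, by $(\mathcal H1)$, the maps $(s,u)\mapsto S_u^1(s)$ and $(s,u)\mapsto\sigma_u^1(s)$ are $C^1$.

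The crucial computation is an exact formula for $D'$. Differentiating and using the compatibility relation $\nabla q=\nabla\eta\,\nabla f$ gives $D'(s)=[\nabla\eta(S_u^1(s))-\nabla\eta(S_u^1(s_0))][\nabla f(S_u^1(s))-\sigma_u^1(s)I]\tfrac{d}{ds}S_u^1(s)-\dot\sigma_u^1(s)\,\eta(S_u^1(s)|S_u^1(s_0))$. Then differentiating the Rankine--Hugoniot relation $f(S_u^1(s))-f(u)=\sigma_u^1(s)(S_u^1(s)-u)$ yields $[\nabla f(S_u^1(s))-\sigma_u^1(s)I]\tfrac{d}{ds}S_u^1(s)=\dot\sigma_u^1(s)(S_u^1(s)-u)$; substituting this and expanding the definitions of $q(\cdot\,;\cdot)$ and $\eta(\cdot|\cdot)$, the expression collapses to
\begin{align*}
D'(s)=\dot\sigma_u^1(s)\,\big[\eta(u|S_u^1(s))-\eta(u|S_u^1(s_0))\big].
\end{align*}
At this point $(\mathcal H1)$ does the work: $\dot\sigma_u^1<0$ by the Liu condition (a), and $s\mapsto\eta(u|S_u^1(s))$ is strictly increasing on $s>0$ by the relative-entropy strengthening (b) (with $\eta(u|S_u^1(0))=0$). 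Hence $D'(s)<0$ for $s>s_0$ and $D'(s)>0$ for $0\le s<s_0$, so, with $D(s_0)=0$, $D(s)<0$ for every admissible $s\ne s_0$: the qualitative statement.

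For the quantitative, uniform version I would fix $\delta_0\in(0,\rho/2)$ and work on a compact parameter region ($|u|\le B$, and $s$ in a fixed bounded interval containing $[\rho/2,B]$), using $(\mathcal H1)$(c) and continuity to keep the relevant arcs of the shock curves inside a fixed compact subset of $\mathcal V$; there one has $c_0\le|\dot\sigma_u^1|\le C_1$ and $\tfrac{d}{ds}\eta(u|S_u^1(\cdot))\ge c_1>0$. For $|s-s_0|<\delta\le\delta_0$ the choice $\delta_0<\rho/2$ keeps the whole segment between $s$ and $s_0$ in that region, so $\eta(u|S_u^1(r))-\eta(u|S_u^1(s_0))$ has size at least $c_1|r-s_0|$; combined with $|\dot\sigma_u^1(r)|\ge c_0$ the formula for $D'$ gives $|D'(r)|\ge c_0c_1|r-s_0|$ with the favourable sign, and integrating from $s_0$ to $s$ yields $D(s)\le-\tfrac12 c_0c_1|s-s_0|^2$; together with $|\sigma_u^1(s)-\sigma_u^1(s_0)|\le C_1|s-s_0|$ this is the first inequality. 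For $|s-s_0|\ge\delta$, say $s\ge s_0+\delta$ (the case $s\le s_0-\delta$ is symmetric and again uses $\delta_0<\rho/2$), I would split $\int_{s_0}^{s}D'$ at $s_0+\delta$, discard the favourably-signed piece on $(s_0,s_0+\delta)$, and on $(s_0+\delta,s)$ use monotonicity of $\eta(u|S_u^1(\cdot))$ to replace $\eta(u|S_u^1(r))-\eta(u|S_u^1(s_0))$ by its value at $s_0+\delta$, which is $\ge c_1\delta$; this produces $D(s)\le-c_1\delta\,|\sigma_u^1(s)-\sigma_u^1(s_0+\delta)|$. Combining this with the already-proven quadratic bound $D(s)\le D(s_0+\delta)\le-\tfrac12 c_0c_1\delta^2$, plus a one-line case split according to whether $|\sigma_u^1(s)-\sigma_u^1(s_0+\delta)|$ exceeds $C_1\delta$ (so as to trade $\sigma_u^1(s_0+\delta)$ for $\sigma_u^1(s_0)$ up to a fixed factor), delivers the second inequality.

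The real obstacle is not any single computation but the uniformity: that $k$ and $\delta_0$ can be taken to depend only on $B$ and $\rho$. This is precisely where the hypothesis $\rho>0$ is indispensable. Indeed $\tfrac{d}{ds}\eta(u|S_u^1(s))=\big(\tfrac{d}{ds}S_u^1(s)\big)^{T}\nabla^2\eta(S_u^1(s))\,(S_u^1(s)-u)$ tends to $0$ as $s\to0$ (since $S_u^1(s)-u\to0$), so the strengthening in $(\mathcal H1)$(b) degenerates near $s=0$ and there is no uniform lower bound $c_1$ unless $s_0$ stays $\ge\rho$; this is also what forces $\delta_0<\rho/2$, so that even for $s<s_0$ one never drifts into the degenerate regime. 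One must likewise be careful to invoke $(\mathcal H1)$(c) so that the shock arcs over the relevant $s$-range stay away from $\partial\mathcal V$, keeping the $C^2$ bounds on $q,\eta$ and the $C^1$ bounds on $S_u^1,\sigma_u^1$ genuinely uniform in $u$ with $|u|\le B$.
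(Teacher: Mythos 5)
Your proof is correct, and it follows essentially the same route as the argument the paper defers to (\cite{move_entire_solution_system}, after \cite[p.~387-9]{MR3519973}): the DiPerna-type identity $\frac{d}{ds}\big[q(S_u^1(s);S_u^1(s_0))-\sigma_u^1(s)\eta(S_u^1(s)|S_u^1(s_0))\big]=\dot\sigma_u^1(s)\big[\eta(u|S_u^1(s))-\eta(u|S_u^1(s_0))\big]$, combined with $(\mathcal{H}1)$(a)--(b) and a compactness argument to make $k,\delta_0$ uniform in $u$ and $s_0\in(\rho,B)$. Your observation that $\rho>0$ is exactly what prevents the degeneration of hypothesis (b) near $s=0$, and hence what makes the constants uniform, is precisely the point the cited proof is careful about.
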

The formula \eqref{entropy_lost_right_side_1_shock_inequalities} is a modification on a key lemma due to DiPerna \cite{MR523630}. The proof of \Cref{entropy_lost_right_side_1_shock} in \cite{move_entire_solution_system} is based on the proof of a very similar result in \cite[p.~387-9]{MR3519973}. The proof in \cite{move_entire_solution_system} modifies the proof in \cite[p.~387-9]{MR3519973} -- being careful to keep the constants $k$ and $\delta_0$ uniform in $s_0$ and $u$.

\subsection{Proof of \Cref{systems_entropy_dissipation_room}}
The proof of \eqref{dissipation_negative_claim} and \eqref{dissipation_negative_claim_n} is based on the work \cite{move_entire_solution_system}. The result \eqref{ordering_h_1_h_n_RP} is a novel contribution.

\uline{Proof of \eqref{dissipation_negative_claim}}

We will use \Cref{dissipation_negative_theorem}. The 1-shock $(u_{L,1},u_{R,1},\sigma^1(u_{L,1},u_{R,1}))$ in \Cref{systems_entropy_dissipation_room} will play the role of $(u_L,S^1_{u_L}(s_R))$ in \Cref{dissipation_negative_theorem}. Take $R\coloneqq \max\{\norm{u}_{L^\infty},\abs{u_{L,1}}\}$ and then take the $\tilde{S}$ corresponding to this $R$ as in Property (c) of $(\mathcal{H}1)$. Define the $B$ in \Cref{dissipation_negative_theorem} to be $B\coloneqq \max\{R,\tilde{S},\abs{u_{R,1}}\}$. Then, we have that for all $(u_-,u_+,\sigma)$ 1-shock with $u_-,u_+ < R$, there exists $s\in(0,B)$ such that $u_+=S^1_{u_-}(s)$. Further, note that $B$ depends on $\norm{u}_{L^\infty}$ and $\abs{u_{L,1}}$. 

Then,  we will have a constant $0<a_{1,*}<1$ as in \Cref{dissipation_negative_theorem}. Here, $a_{1,*}$ is playing the role of the $a_*$ in \Cref{dissipation_negative_theorem}.  Then, as in the statement of \Cref{systems_entropy_dissipation_room}, we choose any $a_1\in(0,a_{1,*})$.

Throughout this proof, $c$ denotes a generic constant that depends on $\norm{u}_{L^\infty}$, $\rho$, $\abs{u_{R,1}}$, $\abs{u_{L,1}}$, and $a_1$.

\uline{Step 1}

We now show that for any $\gamma_0>0$,

\begin{align}\label{bound_on_inf}
\inf \eta(u|u_L)- a_1\eta(u|u_R) \geq c_4\gamma_0^2
\end{align}
for a constant $c_4>0$, where the infimum runs over all $(u,u_L,u_R)$ such that $\mbox{dist}(u,\{w|\eta(w|u_L)\leq a_1\eta(w|u_R)\})\geq \gamma_0$ and $\abs{u_L},\abs{u_R}\leq B$. Here, $B$ is from \Cref{dissipation_negative_theorem} and the distance $\mbox{dist}(x,A)$ between a point $x$ and a set $A$ is defined in the usual way,
\begin{align}
\mbox{dist}(x,A) \coloneqq \inf_{y\in A} \abs{x-y}.
\end{align}

Consider any triple $(u,u_L,u_R)$ such that  $\mbox{dist}(u,\{w|\eta(w|u_L)\leq a_1\eta(w|u_R)\})\geq \gamma_0$ and $\abs{u_L},\abs{u_R}\leq B$. 

By \Cref{dissipation_negative_theorem}, the set $\{w|\eta(w|u_L)\leq a_1\eta(w|u_R)\}$ is compact. Thus, there exists $w_0\in\{w|\eta(w|u_L)\leq a_1\eta(w|u_R)\}$ such that
\begin{align}
\abs{u-w_0}= \mbox{dist}(u,\{w|\eta(w|u_L)\leq a_1\eta(w|u_R)\}).
\end{align}

We Taylor expand the function 
\begin{align}
\Gamma(u)\coloneqq \eta(u|u_L)- a_1\eta(u|u_R)
\end{align}
around the point $w_0$:

\begin{align}
\Gamma(u)=\Gamma(w_0)+\nabla\Gamma(w_0)(u-w_0)+\int\limits_0^1 (1-t)(u-w_0)^{T}\nabla^2\Gamma(w_0+t(u-w_0))(u-w_0)\,dt.
\end{align}

By definition of $w_0$, we must have $\Gamma(w_0)=0$ and $\nabla\Gamma(w_0)(u-w_0)\geq0$.

Note that $\nabla^2\Gamma=(1-a_1)\nabla^2\eta$. Thus, by strict convexity of $\eta$ and because $0<a_1<1$, we have $\nabla^2\Gamma\geq cI$ for some constant $c>0$.

We then calculate,
\begin{align}
\int\limits_0^1 (1-t)(u-w_0)^{T}\nabla^2\Gamma(w_0+t(u-w_0))(u-w_0)\,dt\\
\geq \int\limits_0^{.5} (1-t)(u-w_0)^{T}\nabla^2\Gamma(w_0+t(u-w_0))(u-w_0)\,dt,
\shortintertext{where we have changed the limits of integration. Continuing,}
\geq .5 c \abs{u-w_0}^2 \geq .5 c \gamma_0^2,
\end{align}
where the last inequality comes from $\mbox{dist}(u,\{w|\eta(w|u_L)\leq a_1\eta(w|u_R)\})\geq \gamma_0$. This proves \eqref{bound_on_inf}. 

We choose 
\begin{align}\label{epsilon_0_def}
\gamma_0\coloneqq \frac{c_1}{2L_*},
\end{align}
where $c_1$ is from \Cref{dissipation_negative_theorem} and  $L_*$ is the Lipschitz constant of the map
\begin{align}
(u,u_L,u_R)\mapsto a\big(q(u;u_R)-\lambda_1(u)\eta(u|u_R)\big)-q(u;u_L)+\lambda_1(u)\eta(u|u_L).
\end{align}

\uline{Step 2}

Define
\begin{align}\label{V_1_def_RP}
V_1(u)\coloneqq \lambda_{1}(u)-C_{*,1}\mathbbm{1}_{\{u|a_1\eta(u|u_{R,1})<\eta(u|u_{L,1})\}}(u),
\end{align}
where $C_{*,1}>0$ is a large constant, which we can pick to be
\begin{align}\label{C_star_def}
C_{*,1}\coloneqq \frac{1}{c_4\gamma_0^2}\Bigg(\sup_{u,u_L,u_R\in B_B(0)}\abs{aq(u;u_R)-q(u;u_L)}+1\Bigg) + 2\sup_{u\in B_B(0)}\abs{\lambda_1(u)},
\end{align}
where $c_4$ is from \eqref{bound_on_inf}.

We solve the following ODE in the sense of Filippov flows,
\begin{align}
  \begin{cases}\label{ODE}
   \dot{h}_1(t)=V(u(h_1(t),t))\\
   h_1(0)=0,
  \end{cases}
\end{align}

The existence of such an $h$ comes from the following lemma,
\begin{lemma}[Existence and ordering of Filippov flows]\label{Filippov_existence_RP}
For $i=1,2$ let $V_i(u,t):\mathbb{R}^n \times [0,\infty)\to\mathbb{R}$ be bounded on $\mathbb{R}^n \times [0,\infty)$, upper semi-continuous in $u$, and measurable in $t$.  Let $u$ be a weak solution to \eqref{system}, entropic for the entropy $\eta$, and that takes values in a compact set $K$. Assume also that $u$ verifies the strong trace property (\Cref{strong_trace_definition}). Let $x_0\in\mathbb{R}$. Then for $i=1,2$ we can solve 
\begin{align}
  \begin{cases}\label{ODE}
   \dot{g}_i(t)=V_i(u(g_i(t),t),t)\\
   g_i(0)=x_0,
  \end{cases}
\end{align}
in the Filippov sense. That is, there exist Lipschitz functions $g_i:[0,\infty)\to\mathbb{R}$ such that
\begin{align}
\mbox{Lip}[g_i]\leq \norm{V_i}_{L^\infty},\label{fact1}\\
g_i(0)=x_0,\label{fact2}\\
\dot{g}_i(t)\in I[V(u^i_+,t),V(u^i_-,t)],\label{fact3}
\end{align}
for almost every $t$, where $u^i_\pm\coloneqq u(g_i(t)\pm,t)$ and $I[a,b]$ denotes the closed interval with endpoints $a$ and $b$. 

Moreover, for almost every $t$,
\begin{align}
f(u^i_+)-f(u^i_-)=\dot{g}_i(u^i_+-u^i_-),\label{fact4}\\
q(u^i_+)-q(u^i_-)\leq\dot{g}_i(\eta(u^i_+)-\eta(u^i_-)),\label{fact5}
\end{align}
which means that for almost every $t$, either $(u^i_+,u^i_-,\dot{g}_i)$ is an entropic shock (for $\eta$) or $u^i_+=u^i_-$.

Furthermore, if there exists $\mu>0$ such that for all $v\in K$ we have 
\begin{align}\label{flow_GAP}
V_2(v)-V_1(v)\geq \mu,
\end{align}
then $g_1$ and $g_2$ satisfy 
\begin{align}\label{Filippov_ordering_RP}
g_2(t)\geq g_1(t) \mbox{ for all } t\in [0,T).
\end{align}
\end{lemma}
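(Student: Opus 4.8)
The plan is to split the proof into two largely independent pieces: the existence of each individual Filippov flow together with \eqref{fact1}--\eqref{fact5}, which is by now routine within the theory of shifts and which I would import almost verbatim from \cite{Leger2011,scalar_move_entire_solution,move_entire_solution_system}, and the ordering \eqref{Filippov_ordering_RP}, which is the new content and which I would prove in full. It is worth recording at the outset why a naive comparison of two independently constructed flows is hopeless: at a contact time $t$ with $g_1(t)=g_2(t)$ that carries a genuine shock of $u$, the Rankine--Hugoniot relation \eqref{fact4} pins \emph{both} derivatives to the shock speed, so $\dot g_1(t)=\dot g_2(t)$ and there is no separating drift at touching times. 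The ordering therefore has to be built into the construction of $g_2$, not extracted afterward.

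For the existence part, for $i=1,2$ one forms the Filippov set-valued right-hand side $F_i(x,t)$, the intersection over $\delta>0$ and Lebesgue-null sets $N$ of the closed convex hulls $\overline{\mathrm{conv}}\, V_i(u(B_\delta(x)\setminus N,t),t)$. Since $V_i$ is bounded, upper semicontinuous in $u$, and measurable in $t$, the map $(x,t)\mapsto F_i(x,t)$ has nonempty compact convex values, is bounded by $\norm{V_i}_{L^\infty}$, upper semicontinuous in $x$, and measurable in $t$, so the classical existence theorem for ODEs with discontinuous right-hand side --- in the form used in \cite{Leger2011,scalar_move_entire_solution,move_entire_solution_system} --- produces Lipschitz $g_i$ with $g_i(0)=x_0$ and $\dot g_i(t)\in F_i(g_i(t),t)$ a.e., which gives \eqref{fact1} and \eqref{fact2}. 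The strong trace property (\Cref{strong_trace_definition}) along the curve $g_i$ forces, for a.e.\ $t$, the values $u(g_i(t)\pm y,t)$ to cluster at $u^i_\pm$ as $y\to 0^+$; combined with upper semicontinuity of $V_i$ this yields \eqref{fact3}. Finally \eqref{fact4}--\eqref{fact5} and the resulting dichotomy do not use the Filippov property at all: testing the weak formulation \eqref{u_solves_equation_integral_formulation_chitchat} and the entropy inequality \eqref{u_entropy_integral_formulation_chitchat} against test functions concentrating on thin strips around the Lipschitz graph $\{x=g_i(t)\}$, and using the strong traces, gives for a.e.\ $t$ either $u^i_+=u^i_-$ or the Rankine--Hugoniot identity and entropy jump inequality with speed $\dot g_i(t)$; this is exactly the computation in \cite{move_entire_solution_system}.

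For the ordering I would construct $g_1$ first, as above, and then construct $g_2$ against $g_1$ \emph{used as an obstacle}. Fix a constant $M>\norm{V_1}_{L^\infty}+\norm{V_2}_{L^\infty}$, and let $\widehat V_2$ be the field equal to $V_2(u(x,t),t)$ for $x>g_1(t)$ and equal to $M$ for $x\le g_1(t)$. This still has the structure --- an upper semicontinuous function of $u$ above the Lipschitz curve $g_1$, a large constant below it --- for which the construction of the previous paragraph applies, so there is a Filippov flow $g_2$ of $\widehat V_2$ with $g_2(0)=x_0$. An escape-velocity argument then gives \eqref{Filippov_ordering_RP}: if $g_2(t_1)<g_1(t_1)$ for some $t_1$, put $t_0\coloneqq\sup\{t\le t_1 : g_2(t)\ge g_1(t)\}$, so $g_2(t_0)=g_1(t_0)$ and $g_2(t)<g_1(t)$ for all $t\in(t_0,t_1]$; then for a.e.\ $t\in(t_0,t_1)$ an entire neighborhood of $g_2(t)$ lies strictly to the left of $g_1(t)$, so $F^{\widehat V_2}(g_2(t),t)=\{M\}$ and $\dot g_2(t)=M$, whence $g_2(t_1)=g_1(t_0)+M(t_1-t_0)>g_1(t_0)+\norm{V_1}_{L^\infty}(t_1-t_0)\ge g_1(t_1)$, a contradiction.

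What remains --- and this is the step I expect to be the actual work --- is to check that $g_2$, built against the obstacle, still meets every conclusion of the lemma \emph{relative to $V_2$}. On the open set $\{g_2>g_1\}$ one has $\widehat V_2=V_2(u(\cdot,\cdot),\cdot)$ near the graph of $g_2$, so there $g_2$ is a genuine Filippov flow of $V_2$ and \eqref{fact4}--\eqref{fact5} hold by the trace argument above. The delicate region is the contact set $\{g_1=g_2\}$: since $g_1,g_2$ are Lipschitz and coincide there, $\dot g_1=\dot g_2$ a.e.\ on it, and the \emph{strict} gap \eqref{flow_GAP} is exactly what rescues the situation. For instance, \eqref{fact1} for $g_2$ follows at once: $\dot g_2=\dot g_1$ is, by \eqref{fact3} for $g_1$, at most $\max(V_1(u^1_+),V_1(u^1_-))\le\max(V_2(u^1_+),V_2(u^1_-))-\mu<\norm{V_2}_{L^\infty}$, while $\dot g_2\in F^{\widehat V_2}(g_2(t),t)\subset[-\norm{V_2}_{L^\infty},M]$ forces $\dot g_2\ge-\norm{V_2}_{L^\infty}$, so $|\dot g_2|\le\norm{V_2}_{L^\infty}$. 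One argues similarly for \eqref{fact3}--\eqref{fact5}: the sub-case $u^1_+=u^1_-$ cannot persist on a positive-measure subset of the contact set because there the strict gap makes the vector field just to the right of $g_1$ move faster than $g_1$ itself, pushing $g_2$ off the obstacle; and on the sub-case where $g_2$ rides the same shock as $g_1$, Rankine--Hugoniot uniqueness of the speed together with \eqref{flow_GAP} reconciles $\dot g_2$ with $I[V_2(u^2_+),V_2(u^2_-)]$. This contact-set book-keeping is the only genuinely technical point; everything else is classical Filippov theory or the one-line escape estimate.
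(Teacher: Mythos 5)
Your treatment of the existence of each individual flow and of \eqref{fact1}--\eqref{fact5} is fine and matches the standard route (the paper uses the explicit one-sided mollification $v_{i,n}(x,t)=\int_0^1 V_i(u(x+\tfrac{y}{n},t),t)\,dy$ rather than the abstract set-valued formulation, but the content is the same, and \eqref{fact4}--\eqref{fact5} are imported from Leger--Vasseur in both cases). The problem is the ordering. Your construction of $g_2$ as a Filippov flow of the modified field $\widehat V_2$ (equal to $M$ on $\{x\le g_1(t)\}$) does yield $g_2\ge g_1$ by the escape-velocity argument, but it produces a flow of $\widehat V_2$, not of $V_2$, and the lemma --- as it is used in \Cref{systems_entropy_dissipation_room} --- needs \eqref{fact3} for $V_2$ itself: the entire case analysis there rests on $\dot h$ lying in $I[V(u_+,t),V(u_-,t)]$. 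You correctly identify the contact set $\{g_1=g_2\}$ as the place where this must be checked, but the check you sketch does not go through. First, your ``pushing off the obstacle'' claim needs a \emph{lower} bound on $V_2(u(y,t),t)$ for $y$ just to the right of the curve; upper semicontinuity of $V_2$ gives only upper bounds, and in the actual application $V_1(u)=\lambda_1(u)-C_{*,1}\mathbbm{1}_{A}(u)$ with $A$ open, so the field genuinely jumps \emph{down} at nearby states and no such lower bound is available. Second, even granting traces, on the contact set one has $\dot g_2=\dot g_1\in I[V_1(u_+,t),V_1(u_-,t)]$ a.e., and this interval need not meet $I[V_2(u_+,t),V_2(u_-,t)]$: take $V_2=V_1+\mu$ and a time where $g_1$ rides a shock with $V_1(u_+,t)=V_1(u_-,t)=\sigma_{RH}$; then $I[V_2(u_+,t),V_2(u_-,t)]=\{\sigma_{RH}+\mu\}$ while $\dot g_2=\sigma_{RH}$. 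So unless you prove that the contact set is Lebesgue-null --- which is precisely the quantitative statement your sketch assumes rather than establishes --- \eqref{fact3} can fail for $g_2$.

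The paper avoids this entirely by never decoupling the two constructions: both approximate flows $g_{1,n},g_{2,n}$ are built from the \emph{same} one-sided mollification, so whenever $\abs{g_{2,n}(t)-g_{1,n}(t)}<\mu/(n(\mu+4M))$ the two averaging windows overlap on most of their length, \eqref{flow_GAP} applies on the overlap, and one gets the uniform drift $\dot g_{2,n}-\dot g_{1,n}\ge 2M\mu/(\mu+4M)>0$ there. Since $g_{1,n}(0)=g_{2,n}(0)=x_0$, the difference can therefore never become negative, and the ordering survives the limit $n\to\infty$ while each limit remains an honest Filippov flow of its own unmodified field, so \eqref{fact1}--\eqref{fact5} come for free. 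If you want to rescue your obstacle construction you would have to show the contact set has measure zero, and the natural way to do that is exactly this kind of quantitative separation estimate --- at which point the obstacle is superfluous.
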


The proof of \eqref{fact1}, \eqref{fact2}, and \eqref{fact3} is very similar to the proof of Proposition 1 in \cite{Leger2011}. 

It is well known that \eqref{fact4} and \eqref{fact5} are true for any Lipschitz continuous function $g:[0,\infty)\to\mathbb{R}$ when $u$ is BV. When instead $u$ is only known to have strong traces (\Cref{strong_trace_definition}), then \eqref{fact4} and \eqref{fact5} are given in Lemma 6 in \cite{Leger2011}. We do not prove \eqref{fact4} and \eqref{fact5} here; their proof is in the appendix in \cite{Leger2011}.

The result \eqref{Filippov_ordering_RP} is a new result about Filippov flows novel to this article.

The proof of \eqref{Filippov_ordering_RP} is in \Cref{Filippov_existence_RP_section}. Moreover, for completeness, the proofs of  \eqref{fact1}, \eqref{fact2} and \eqref{fact3} are also in \Cref{Filippov_existence_RP_section}.

Note that $V_1$ (see \eqref{V_1_def_RP}) is upper semi-continuous in $u$ because indicator functions of open sets are lower semi-continuous and the negative of a lower semi-continuous function is upper semi-continuous.

\uline{Step 3}

Let  $u^1_{\pm}\coloneqq u(u(h_1(t)\pm,t)$.

Note that by \Cref{Filippov_existence_RP}, 
\begin{align}\label{where_h_lives}
\dot{h}_1(t)\in I \Bigg[\lambda_{1}(u^1_+)-C_{*,1}\mathbbm{1}_{\{u|a_1\eta(u|u_{R,1})<\eta(u|u_{L,1})\}}(u^1_+),\\
\lambda_{1}(u^1_-)-C_{*,1}\mathbbm{1}_{\{u|a_1\eta(u|u_{R,1})<\eta(u|u_{L,1})\}}(u^1_-)\Bigg].
\end{align}

We are now ready to show \eqref{dissipation_negative_claim}. 

For each fixed time $t$, we have 4 cases to consider to prove \eqref{dissipation_negative_claim}:\newline
\emph{Case 1}
\begin{align}
a_1\eta(u^1_-|u_{R,1})<\eta(u^1_-|u_{L,1}),\\
a_1\eta(u^1_+|u_{R,1})<\eta(u^1_+|u_{L,1}).
\end{align}
\emph{Case 2}
\begin{align}
a_1\eta(u^1_-|u_{R,1})<\eta(u^1_-|u_{L,1}),\\
a_1\eta(u^1_+|u_{R,1})\geq\eta(u^1_+|u_{L,1}).
\end{align}
\emph{Case 3}
\begin{align}
a_1\eta(u^1_-|u_{R,1})\geq\eta(u^1_-|u_{L,1}),\\
a_1\eta(u^1_+|u_{R,1})<\eta(u^1_+|u_{L,1}).
\end{align}
\emph{Case 4}
\begin{align}
a_1\eta(u^1_-|u_{R,1})\geq\eta(u^1_-|u_{L,1}),\\
a_1\eta(u^1_+|u_{R,1})\geq\eta(u^1_+|u_{L,1}).
\end{align}

Note that we allow for $u^1_+=u^1_-$.

We start with 

\emph{Case 1}

In this case, by \eqref{fact3}, \eqref{C_star_def}, and \eqref{where_h_lives} we know that 
\begin{equation}
\begin{aligned}\label{control_h}
\dot{h}_1(t)\leq -\frac{1}{c_4\gamma_0^2}\Bigg(\sup_{u,u_L,u_R\in B_B(0)}\abs{aq(u;u_R)-q(u;u_L)}+1\Bigg)-\sup_{u\in B_B(0)}\abs{\lambda_1(u)}
\\
<\inf_{u\in B_B(0)} \lambda_1(u).
\end{aligned} 
\end{equation}

If $u^1_+\neq u^1_-$, then we have \eqref{fact4} and \eqref{fact5}. But then \eqref{control_h} contradicts $(\mathcal{H}2)$. Thus, $u^1_+= u^1_-$.

Let $v\coloneqq u^1_+=u^1_-$.

If $\mbox{dist}(v,\{w|\eta(w|u_{L,1})\leq a_1\eta(w|u_{R,1})\})\geq \gamma_0$, then

\begin{equation}
\begin{aligned}\label{dissipation_negative_claim_proof_case1_1}
&a\bigg(q(u^1_+;u_{R,1})-\dot{h}_1(t)\eta(u^1_+|u_{R,1})\bigg)-q(u^1_-;u_{L,1})+\dot{h}_1(t)\eta(u^1_-|u_{L,1}) \\
&\hspace{1in}=a\bigg(q(v;u_{R,1})-\dot{h}_1(t)\eta(v|u_{R,1})\bigg)-q(v;u_{L,1})+\dot{h}_1(t)\eta(v|u_{L,1})\\
&\hspace{1in}=aq(v;\bar{u}_+(t))-q(v;\bar{u}_-(t))-\dot{h}(t)\big(a\eta(v|\bar{u}_+(t))-\eta(v|\bar{u}_-(t))\big)\\
&\hspace{1in}\leq -1,
\end{aligned}
\end{equation}
because of \eqref{control_h} and \eqref{bound_on_inf}. Because the term $\abs{\sigma^1(u_{L,1},u_{R,1})-\dot{h}_1(t)}^2$ on the right hand side of \eqref{dissipation_negative_claim} is bounded due to \eqref{fact1}, we have proven \eqref{dissipation_negative_claim} by choosing $c$ sufficiently small.

If on the other hand, $\mbox{dist}(v,\{w|\eta(w|u_{L,1})\leq a_1\eta(w|u_{R,1})\})< \gamma_0$, then

\begin{equation}
\begin{aligned}\label{dissipation_negative_claim_proof_case1_2}
&a\bigg(q(u^1_+;u_{R,1})-\dot{h}_1(t)\eta(u^1_+|u_{R,1})\bigg)-q(u^1_-;u_{L,1})+\dot{h}_1(t)\eta(u^1_-|u_{L,1}) \\
&\hspace{1in}=a\bigg(q(v;u_{R,1})-\dot{h}_1(t)\eta(v|u_{R,1})\bigg)-q(v;u_{L,1})+\dot{h}_1(t)\eta(v|u_{L,1}) \\
&\hspace{1in}=aq(v;\bar{u}_+(t))-q(v;\bar{u}_-(t))-\dot{h}(t)\big(a\eta(v|\bar{u}_+(t))-\eta(v|\bar{u}_-(t))\big)\\
&\hspace{1in}\leq a\bigg(q(v;u_{R,1})-\lambda_1(v)\eta(v|u_{R,1})\bigg)-q(v;u_{L,1})+\lambda_1(v)\eta(v|u_{L,1}),\\
&\mbox{because $\eta(v|u_{L,1})-a_1\eta(v|u_{R,1})\geq 0$ and $\dot{h}_1\leq -\sup_{u\in B_B(0)}\abs{\lambda_1(u)}$. Continuing,}\\
&\mbox{we get}
\\
&\hspace{1in}\leq -\frac{1}{2}c_1,
\end{aligned}
\end{equation}
from \eqref{dissipation_negative_boundary_of_convex_set}, the definition of $\gamma_0$ \eqref{epsilon_0_def}, the assumption that $\mbox{dist}(v,\{w|\eta(w|u_{L,1})\leq a_1\eta(w|u_{R,1})\})< \gamma_0$ and the assumption that  $r_1\geq \rho$. Again because the term $\abs{\sigma^1(u_{L,1},u_{R,1})-\dot{h}_1(t)}^2$ on the right hand side of \eqref{dissipation_negative_claim} is bounded due to \eqref{fact1}, we have proven \eqref{dissipation_negative_claim} by choosing $c$ sufficiently small. Note $c$ will depend on $\rho$.

\emph{Case 2}

In this case, we must have $u^1_-\neq u^1_+$. Recall also that \eqref{system} is hyperbolic. Furthermore, we have from \eqref{fact3} that $\dot{h}_1\in \Bigg[-\frac{1}{c_4\gamma_0^2}\Bigg(\sup_{u,u_L,u_R\in B_B(0)}\abs{aq(u;u_R)-q(u;u_L)}+1\Bigg)-\sup_{u\in B_B(0)}\abs{\lambda_1(u)},\lambda_1(u^1_+)\Bigg]$. However, this implies that $(u^1_+,u^1_-,\dot{h}_1)$ is a right 1-contact discontinuity (see \cite[p.~274]{dafermos_big_book}). This contradicts the hypothesis $(\mathcal{H}2)$ on the shock $(u_+,u_-,\dot{h})$, which is entropic for $\eta$ because of \eqref{fact4} and \eqref{fact5}. The hypothesis $(\mathcal{H}2)$ forbids right 1-contact discontinuities. Thus, we conclude that this case (\emph{Case 2}) cannot actually occur.

\emph{Case 3}

In this case, we have from \eqref{fact3} that 
\begin{align}
\dot{h}_1\in \Bigg[-\frac{1}{c_4\gamma_0^2}\Bigg(\sup_{u,u_L,u_R\in B_B(0)}\abs{aq(u;u_R)-q(u;u_L)}+1\Bigg)-\sup_{u\in B_B(0)}\abs{\lambda_1(u)},\lambda_1(u^1_-)\Bigg].
\end{align}
By the hypothesis $(\mathcal{H}3)$, along with \eqref{fact4}, \eqref{fact5}, we have that $(u^1_+,u^1_-,\dot{h}_1)$ must be a 1-shock. Also, $u^1_-$ verifies $a_1\eta(u^1_-|u_{R,1})\geq\eta(u^1_-|u_{L,1})$. Thus, we can apply \Cref{dissipation_negative_theorem}. Recall that $r_1>\rho$ (see \eqref{gap_local_case}). We receive  \eqref{dissipation_negative_claim}.

\emph{Case 4}

In this case, we have from \eqref{fact3} that $\dot{h}_1\in I[\lambda_1(u^1_+),\lambda_1(u^1_-)]$. Then, by the hypothesis $(\mathcal{H}2)$, along with \eqref{fact4}, \eqref{fact5}, we know that we cannot have
\begin{align}\label{this_would_imply_bad}
I[\lambda_1(u_+),\lambda_1(u_-)]=(\lambda_1(u_-),\lambda_1(u_+))
\end{align}
because then \eqref{this_would_imply_bad} would imply that $(u^1_+,u^1_-,\dot{h}_1)$ is a right 1-contact discontinuity. However, $(\mathcal{H}2)$ prevents right 1-contact discontinuities. Recall $(\mathcal{H}3)$. We conclude that $(u^1_+,u^1_-,\dot{h}_1)$ is a 1-shock. Moreover, $u^1_-$ verifies $a_1\eta(u^1_-|u_{R,1})\geq\eta(u^1_-|u_{L,1})$. We can now apply \Cref{dissipation_negative_theorem}. Recall that $r_1>\rho$ (see \eqref{gap_local_case}). This gives \eqref{dissipation_negative_claim}.

\uline{Proof of \eqref{dissipation_negative_claim_n}}

To prove \eqref{dissipation_negative_claim_n}, note that if $v(x,t)$ solves \eqref{system}, then $v(-x,t)$ will solve 
\begin{align}\label{system_rp_backwards}
v_t+(-f(v))_x=0,
\end{align}
where we have replaced the flux $f$ with $-f$.

The $n\textsuperscript{th}$ characteristic family of \eqref{system} corresponds to the first characteristic family of \eqref{system_rp_backwards}. Thus to prove \eqref{dissipation_negative_claim_n} we simply apply \eqref{dissipation_negative_claim} to the system \eqref{system_rp_backwards}.

Define 
\begin{align}\label{V_n_def_RP}
V_n(u)\coloneqq \lambda_{n}(u)+C_{*,n}\mathbbm{1}_{\{u|a_n\eta(u|u_{L,n})<\eta(u|u_{R,n})\}}(u).
\end{align}

Note that the shift function $h_n$ (from \eqref{dissipation_negative_claim_n}) will solve the following ODE in the sense of Filippov flows,
\begin{align}
  \begin{cases}\label{ODE_n}
   \dot{h}_n(t)=V(u(h_n(t),t))\\
   h_n(0)=0,
  \end{cases}
\end{align}
for a large constant $C_{*,n}>0$ and where $\lambda_n$ as usual refers to the $n\textsuperscript{th}$ characteristic family of \eqref{system}.

Let $K$ be a compact set which contains the range of $u$ (note by assumption $u$ is bounded). Then due to the strict hyperbolicity of \eqref{system}, there is $\theta>0$ such that
\begin{align}\label{lambda_gap_RP}
\lambda_n(v)-\lambda_1(v)\geq \theta
\end{align}
for all $v\in K$. 
 
Then, \eqref{lambda_gap_RP} along with \eqref{V_1_def_RP} and \eqref{V_n_def_RP} imply that $V_1$ and $V_n$ satisfy \eqref{flow_GAP} for some $\mu$.

Then \eqref{Filippov_ordering_RP} implies \eqref{ordering_h_1_h_n_RP}.

This completes the proof of \Cref{systems_entropy_dissipation_room}.

\subsection{Proof of \Cref{Filippov_existence_RP}}\label{Filippov_existence_RP_section}

\hfill \break

\emph{Proof of \eqref{fact1}, \eqref{fact2}, and \eqref{fact3}}

The following proof of \eqref{fact1}, \eqref{fact2}, and \eqref{fact3} is based on the proof of Proposition 1 in \cite{Leger2011}, the proof of Lemma 2.2 in \cite{serre_vasseur}, and the proof of Lemma 3.5 in \cite{2017arXiv170905610K}. We do not prove \eqref{fact4} or \eqref{fact5} here; these properties are in Lemma 6 in \cite{Leger2011}, and their proofs are in the appendix in \cite{Leger2011}.

For $i=1,2$ define
\begin{align}\label{mollified_v_flow}
v_{i,n}(x,t)\coloneqq \int\limits_0^1 V_i\bigg(u(x+\frac{y}{n},t),t\bigg)\,dy.
\end{align}

Let $g_{i,n}$ be the solution to the ODE:
\begin{align}
  \begin{cases}\label{n_ode_RP}
   \dot{g}_{i,n}(t)=v_{i,n}(g_{i,n}(t),t),\mbox{ for }t>0\\
   g_{i,n}(0)=x_0.
  \end{cases}
\end{align}

The $v_{i,n}$ are uniformly bounded in $n$ because by assumption $V_i$ is bounded (\hspace{.07cm}$\norm{v_{i,n}}_{L^\infty}\leq \norm{V_i}_{L^\infty}$). The $v_{i,n}$ are measurable in $t$, and due to the mollification by $\frac{1}{n}$ are also Lipschitz continuous in $x$. Thus \eqref{n_ode_RP} has a unique solution in the sense of Carath\'eodory.

The $g_{i,n}$ are Lipschitz continuous with Lipschitz constants uniform in $n$, due to the $v_{i,n}$ being uniformly bounded in $n$. Thus, by Arzel\`a--Ascoli the $g_{i,n}$ converge in $C^0(0,T)$ for any fixed $T>0$ to a Lipschitz continuous function $g_i$ (passing to a subsequence if necessary). Note that $\dot{g}_{i,n}$ converges in $L^\infty$ weak* to $\dot{g}_i$.

We define
\begin{align}
V_{i,\mbox{max}}(t)\coloneqq \max\{V(u^i_-,t),V(u^i_+,t)\},\\
V_{i,\mbox{min}}(t)\coloneqq \min\{V(u^i_-,t),V(u^i_+,t)\},
\end{align}
where $u^i_\pm \coloneqq u(g_i(t)\pm,t)$.

To show \eqref{fact3}, we will first prove that for almost every $t>0$
\begin{align}
\lim_{n\to\infty}[\dot{g}_{i,n}(t)-V_{i,\mbox{max}}(t)]_+=0,\label{limit_1}\\
\lim_{n\to\infty}[V_{i,\mbox{min}}(t)-\dot{g}_{i,n}(t)]_+=0,\label{limit_2}
\end{align}
where $[\hspace{.1cm}\cdot\hspace{.1cm}]_+\coloneqq\max(0,\cdot)$.

The proofs of \eqref{limit_1} and \eqref{limit_2} are similar; we only show the first one.

\begin{align}
[\dot{g}_{i,n}(t)-V_{i,\mbox{max}}(t)]_+\\
=\Bigg[\int\limits_0^1 V_i\bigg(u(g_{i,n}(t)+\frac{y}{n},t),t\bigg)\,dy-V_{i,\mbox{max}}(t)\Bigg]_+\\
=\Bigg[\int\limits_0^1 V_i\bigg(u(g_{i,n}(t)+\frac{y}{n},t),t\bigg)-V_{i,\mbox{max}}(t)\,dy\Bigg]_+\\
\leq\int\limits_0^1 \Big[V_i\bigg(u(g_{i,n}(t)+\frac{y}{n},t),t\bigg)-V_{i,\mbox{max}}(t)\Big]_+\,dy\\
\leq\esssup_{y\in(0,\frac{1}{n})} \Big[V_i\bigg(u(g_{i,n}(t)+y,t),t\bigg)-V_{i,\mbox{max}}(t)\Big]_+\\
\leq\esssup_{y\in(-\epsilon_{i,n},\epsilon_{i,n})} \Big[V_i\bigg(u(g_i(t)+y,t),t\bigg)-V_{i,\mbox{max}}(t)\Big]_+,\label{last_ineq_Filippov}
\end{align}
where $\epsilon_{i,n}\coloneqq \abs{g_{i,n}(t)-g_i(t)}+\frac{1}{n}$. Note $\epsilon_{i,n}\to0^+$.

Fix a $t\geq0$ such that $u$ has a strong trace in the sense of \Cref{strong_trace_definition}. Then because the map $u\mapsto V_i(u,t)$ is upper semi-continuous,
\begin{align}\label{esssuplim_is_zero}
\lim_{n\to\infty}\esssup_{y\in(0,\frac{1}{n})} \Big[V_i\bigg(u(g_i(t)\pm y,t),t\bigg)-V_i\big(u^i_\pm,t\big)\Big]_+=0,
\end{align}
where $u^i_\pm \coloneqq u(g_i(t)\pm,t)$. Recall that the map $u\mapsto V_i(u,t)$ being upper semi-continuous at the point $u_0$ means that 
\begin{align}
\limsup_{u\to u_0} V_i(u,t) \leq V_i(u_0,t).
\end{align}

From \eqref{esssuplim_is_zero}, we get
\begin{align}\label{esssuplim_is_zero2}
\lim_{n\to\infty}\esssup_{y\in(0,\frac{1}{n})} \Big[V_i\bigg(u(g_i(t)\pm y,t),t\bigg)-V_{i,\mbox{max}}(t)\Big]_+=0.
\end{align}

We can control \eqref{last_ineq_Filippov} from above by the quantity
\begin{equation}
\begin{aligned}\label{esssuplim_is_zero3}
\esssup_{y\in(-\epsilon_{i,n},0)} \Big[V_i\bigg(u(g_i(t)+ y,t),t\bigg)-V_{i,\mbox{max}}(t)\Big]_++\\
\esssup_{y\in(0,\epsilon_{i,n})} \Big[V_i\bigg(u(g_i(t)+ y,t),t\bigg)-V_{i,\mbox{max}}(t)\Big]_+.
\end{aligned}
\end{equation}

By \eqref{esssuplim_is_zero2}, we have that \eqref{esssuplim_is_zero3} goes to $0$ as $n\to\infty$. This proves \eqref{limit_1}.

Recall that $\dot{g}_{i,n}$ converges in $L^\infty$ weak* to $\dot{g}_i$. Thus, due to the convexity of the function $[\hspace{.1cm}\cdot\hspace{.1cm}]_+$,
\begin{align}
\int\limits_0^T[\dot{g}_i(t)-V_{i,\mbox{max}}(t)]_+\,dt\leq \liminf_{n\to\infty}\int\limits_0^T[\dot{g}_{i,n}(t)-V_{i,\mbox{max}}(t)]_+\,dt.
\end{align}

By the dominated convergence theorem and \eqref{limit_1},
\begin{align}
\liminf_{n\to\infty}\int\limits_0^T[\dot{g}_{i,n}(t)-V_{i,\mbox{max}}(t)]_+\,dt=0.
\end{align}

We conclude,
\begin{align}
\int\limits_0^T[\dot{g}_i(t)-V_{i,\mbox{max}}(t)]_+\,dt=0.
\end{align}

From a similar argument,
\begin{align}
\int\limits_0^T[V_{i,\mbox{min}}(t)-\dot{g}_i(t)]_+\,dt=0.
\end{align}

This proves \eqref{fact3}.

\emph{Proof of \eqref{Filippov_ordering_RP}}

\begin{figure}[tb]
      \includegraphics[width=\textwidth]{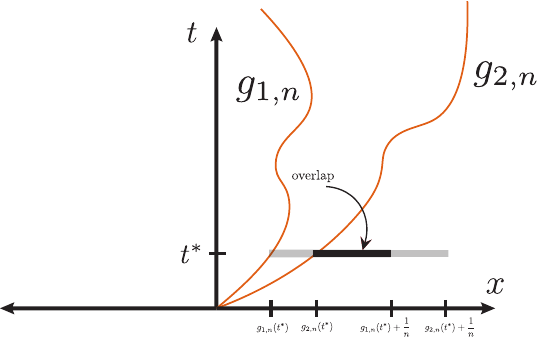}
  \caption{The idea for the proof of \eqref{Filippov_ordering_RP}.}\label{two_flows_figure}
\end{figure}

Let us first explain the idea behind the proof of \eqref{Filippov_ordering_RP}. We use the fact that, for a fixed $t$, according to \eqref{mollified_v_flow} and \eqref{n_ode_RP}, the value of $\dot{g}_{i,n}(t)$ is based on the value of $u(x,t)$ for $x\in[g_{i,n}(t),g_{i,n}(t)+\frac{1}{n}]$. Then, if the values of $g_{1,n}(t)$ and $g_{2,n}(t)$ are close enough together (see \eqref{condition_on_t_RP_flow} below), the domain of $u(\cdot,t)$ used to calculate $\dot{g}_{1,n}(t)$ and the domain of $u(\cdot,t)$ used to calculate $\dot{g}_{2 ,n}(t)$  (according to \eqref{mollified_v_flow} and \eqref{n_ode_RP}) will have some overlap. On this overlap, the estimate \eqref{flow_GAP} says that 
\begin{align}\label{flow_GAP_used_in_proof36}
V_2(u(\cdot,t))-V_1(u(\cdot,t))>\mu.
\end{align}
Thus, when the value of $g_{1,n}(t)$ and $g_{2,n}(t)$ are close enough together, the estimate \eqref{flow_GAP_used_in_proof36} allows us to compensate for the lack of control we have for the parts of the domain of  $u(\cdot,t)$ which are not overlapping, and we find that whenever $g_{1,n}(t)$ and $g_{2,n}(t)$ are close enough together, the difference $\dot{g}_{2,n}-\dot{g}_{1,n}$ must be strictly positive (see \eqref{uniform_control_flow_RP}). This means that  whenever $g_{1,n}$ and $g_{2,n}$ get close together, they start being pushed apart. This, combined with the the identical starting values $g_{2,n}(0)=g_{1,n}(0)=x_0$, yields \eqref{Filippov_ordering_RP} in the $n\to\infty$ limit. See \Cref{two_flows_figure}.

\hfill

We now give the proof.

Fix $n\in\mathbb{N}$. 

Define
\begin{align}\label{RP_M_DEF}
M\coloneqq\max_{i\in\{1,2\}}\norm{V_i}_{L^\infty}.
\end{align}

Assume that for some $t^*$, 
\begin{align}\label{condition_on_t_RP_flow}
\abs{g_{2,n}(t^*)-g_{1,n}(t^*)}<\frac{\mu}{n(\mu+4M)}.
\end{align}

Recall that due to $g_{1,n}$ and $g_{2,n}$ solving \eqref{n_ode_RP} in the sense of Carath\'eodory, for $i=1,2$ they satisfy 
\begin{align}\label{differential_equation_itself_RP}
 \dot{g}_{i,n}(t)=v_{i,n}(g_{i,n}(t),t),
\end{align}
for almost every $t$.

Then if $g_{1,n}$ and $g_{2,n}$ also satisfy the differential equation \eqref{differential_equation_itself_RP} at this time $t^*$, then we have
\begin{equation}
\begin{aligned}\label{string_equals_flow_RP}
&\dot{g}_{2,n}(t^*)-\dot{g}_{1,n}(t^*)
\\
&\hspace{1in}=v_{2,n}(g_{2,n}(t^*),t^*)-v_{1,n}(g_{1,n}(t^*),t^*)
\\
&\hspace{1in}=\int\limits_0^1 V_2\bigg(u(g_{2,n}(t^*)+\frac{y}{n},t^*),t^*\bigg)\,dy
-
\int\limits_0^1 V_1\bigg(u(g_{1,n}(t^*)+\frac{y}{n},t^*),t^*\bigg)\,dy.
\end{aligned}
\end{equation}

Then from \eqref{flow_GAP}, \eqref{RP_M_DEF} and \eqref{condition_on_t_RP_flow}, we have
\begin{equation}
\begin{aligned}\label{uniform_control_flow_RP}
&\int\limits_0^1 V_2\bigg(u(g_{2,n}(t^*)+\frac{y}{n},t^*),t^*\bigg)\,dy
-
\int\limits_0^1 V_1\bigg(u(g_{1,n}(t^*)+\frac{y}{n},t^*),t^*\bigg)\,dy
\\
&\hspace{3.5in}\geq \frac{2M\mu}{\mu+4M}.
\end{aligned}
\end{equation}

Thus,  from \eqref{string_equals_flow_RP} and \eqref{uniform_control_flow_RP} and the fundamental theorem of calculus for $W^{1,1}_{\text{loc}}$ functions we get that for any $t_1,t_2\in[0,T)$ verifying $t_1< t_2$ and such that condition \eqref{condition_on_t_RP_flow} holds for all $t^*\in[t_1,t_2]$,
\begin{align}\label{ordering_t1_t2_flow_RP}
g_{2,n}(t_1)-g_{1,n}(t_1)<g_{2,n}(t_2)-g_{1,n}(t_2).
\end{align}
Because $g_{2,n}(0)=g_{1,n}(0)=x_0$, \eqref{ordering_t1_t2_flow_RP} implies that 
\begin{align}\label{ordering_all_t_flow_RP_n}
g_{1,n}(t)\leq g_{2,n}(t)
\end{align}
for all $t\in[0,T)$.

Thus in the $n\to\infty$ limit, from \eqref{ordering_all_t_flow_RP_n} we in fact get \eqref{Filippov_ordering_RP}.


\section{Proofs of \Cref{RP_theorem_1} and \Cref{RP_theorem_3}}\label{proofs_of_multiple_stability_theorems}
\subsection{\Cref{RP_theorem_1}: $L^2$ Stability for the Riemann Problem with Extremal Shocks Verifying Strong Form of Lax's E-condition}
\begin{theorem}[$L^2$ Stability for the Riemann Problem with Extremal Shocks Verifying Strong Form of Lax's E-condition]
\label{RP_theorem_1}
Fix $T>0$. Assume $u,\bar{v}\in L^\infty(\mathbb{R}\times[0,T))$ are solutions to the system \eqref{system}. Assume that $u$ and $\bar{v}$ are entropic for the entropy $\eta$.  Further, assume that $u$ has strong traces (\Cref{strong_trace_definition}). 

Assume also that $\bar{v}$ is a solution to the Riemann problem \eqref{Riemann_problem} and that $\bar{v}$ has the form \eqref{standard_form_solution_RP}.  If $\bar{v}$ contains a 1-shock, assume the hypotheses $(\mathcal{H})$ hold. Likewise, if $\bar{v}$ contains an n-shock, assume the hypotheses $(\mathcal{H})^*$ hold.

Assume that $\bar{v}$ contains at least one rarefaction wave, and if there are any shocks in $\bar{v}$ they are either a 1-shock verifying  \eqref{lax_strong} or an n-shock verifying \eqref{lax_strong}.

Also assume \eqref{RHS_good_sign_rarefaction} holds. Further, assume the system \eqref{system} has at least two conserved quantities ($n\geq2$). 

Then there exists $\Psi_{\bar{v}}$ with Property $(\mathcal{D})$ and verifying the following stability estimate:
\begin{align}\label{RP_lax_strong_stability_theorem_result}
\int\limits_{-R}^{R}\abs{u(x,t_0)-\Psi_{\bar{v}}(x,t_0)}^2\,dx\leq\mu_1\int\limits_{-R-rt_0}^{R+rt_0}\abs{u^0(x)-\bar{v}(x,0)}^2\,dx,
\end{align}
for all $t_0,R>0$ verifying $t_0\in(0,R)$ and
\begin{align}\label{R_sufficiently_large_RP_theorem_statement}
R>\max_{i}\{\mbox{Lip}[h_i]\}t_0,
\end{align}
where the max runs over the i-shock families contained in $\bar{v}$ (1-shocks and/or n-shocks) and the $h_i$ are in the context of Property $(\mathcal{D})$.

We also have the following $L^2$-type control on the shift functions $h_i$:
\begin{align}\label{control_shifts_two_shifts204}
\int\limits_0^{t_0} \sum_{i}\abs{\sigma^i(\bar{v}_i,\bar{v}_{i+1})-\dot{h}_i(t)}^2\,dt\leq \mu_2\int\limits_{-R-rt_0}^{R+rt_0}\abs{u^0(x)-\bar{v}(x,0)}^2\,dx,
\end{align}
where the sum runs over the i-shock families contained in $\bar{v}$ (1-shocks and/or n-shocks). 

If $\bar{v}$ contains an i-shock, the constants $\mu_1,\mu_2>0$ depend on $\norm{u}_{L^\infty}$, $\abs{\bar{v}_i},\abs{\bar{v}_{i+1}}$, and $\abs{\bar{v}_i-\bar{v}_{i+1}}$. Further, $\mu_1$, and $\mu_2$ depend on bounds on the second derivative of $\eta$ on the range of $u$ and $\bar{v}$. In addition, if $\bar{v}$ contains an i-shock, $\mu_1$ depends on $\sup\norm{\nabla \lambda_i}$ (where the supremum runs over the range of $u$ and $\bar{v}$), and  $(\lambda_{i+1}(\bar{v}_{i+1})-\lambda_i(\bar{v}_i))$. 
\end{theorem}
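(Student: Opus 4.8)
\emph{Proof proposal.} The plan is to build $\Psi_{\bar v}$ from the shift functions produced by \Cref{systems_entropy_dissipation_room}, to monitor a weighted relative-entropy pseudo-distance along a truncated cone, and to close the estimate with the localized dissipation bounds already established; the genuinely new work is in showing the shifts stay out of the rarefaction fans.

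\textbf{Step 1: the shifts and Property $(\mathcal D)$.} If $\bar v$ contains a $1$-shock $(\bar v_1,\bar v_2,\sigma^1)$ (resp.\ an $n$-shock $(\bar v_n,\bar v_{n+1},\sigma^n)$) I apply \Cref{systems_entropy_dissipation_room} to obtain Lipschitz $h_1$ (resp.\ $h_n$) with $h_1(0)=0$ (resp.\ $h_n(0)=0$), the dissipation inequality \eqref{dissipation_negative_claim} (resp.\ \eqref{dissipation_negative_claim_n}) for parameters $a_1,a_n$ fixed in Step~2, and the ordering $h_1(t)\le h_n(t)$; if $\bar v$ has no shocks I set $\Psi_{\bar v}=\bar v$. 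The definition of $\Psi_{\bar v}$ in Property $(\mathcal D)$ forces $h_1(t)<\lambda_2(\bar v_2)t$ and $\lambda_{n-1}(\bar v_n)t<h_n(t)$, i.e.\ the shift of an extremal shock must never overtake the neighboring rarefaction fan, and verifying this is the crux. Fix a differentiability point $t$. By \Cref{systems_entropy_dissipation_room}, either $\dot h_1(t)<\inf\lambda_1\le\lambda_1(\bar v_1)$, or $(u^1_+,u^1_-,\dot h_1(t))$ is a $1$-shock with $u^1_-$ in the sublevel set $\{\,u:\eta(u|\bar v_1)\le a_1\eta(u|\bar v_2)\,\}$, which by \Cref{a_cond_lemma_itself} lies in $B_\theta(\bar v_1)$ once $a_1$ is small; in that case the Liu condition of $(\mathcal{H}1)$(a) gives $\dot h_1(t)=\sigma^1_{u^1_-}(s)\le\lambda_1(u^1_-)$, and Lipschitz continuity of $\lambda_1$ bounds $\dot h_1(t)\le\lambda_1(\bar v_1)+C\theta$. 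The strong Lax condition \eqref{lax_strong} for the extremal $1$-shock supplies the strict gap $\lambda_1(\bar v_1)<\lambda_2(\bar v_2)$, so for $a_1$ (hence $\theta$) small enough $\dot h_1(t)<\lambda_2(\bar v_2)$ a.e.\ with uniform slack, giving $h_1(t)<\lambda_2(\bar v_2)t$ for $t>0$; the bound for $h_n$ follows by the $x\mapsto-x$, $f\mapsto-f$ duality, using $(\mathcal{H}1)^*$(a) and \eqref{lax_strong} for the $n$-shock. This both pins down $\Psi_{\bar v}$ and establishes Property $(\mathcal D)$.

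\textbf{Steps 2--3: the pseudo-distance and its evolution.} With $r$ larger than every relevant characteristic speed on the range of $u,\bar v$, put $\mathcal R(t)=R+r(t_0-t)$ and, in the two-shock case,
\[ E(t)=\int_{-\mathcal R(t)}^{h_1(t)}\eta(u|\bar v_1)\,dx+\alpha\int_{h_1(t)}^{h_n(t)}\eta(u|\Psi_{\bar v})\,dx+\beta\int_{h_n(t)}^{\mathcal R(t)}\eta(u|\bar v_{n+1})\,dx, \]
choosing $\alpha,\beta>0$ so that $a_1=\alpha\in(0,a_{1,*})$ and $a_n=\alpha/\beta\in(a_{n,*},\infty)$ fall in the admissible ranges of \Cref{systems_entropy_dissipation_room} and also meet Step~1's smallness demands (the one-shock and no-shock cases use the evident two- and one-piece variants, and when $\bar v$ has no shocks $\Psi_{\bar v}=\bar v$ and the argument reduces to relative entropy plus \eqref{RHS_good_sign_rarefaction}). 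Condition \eqref{R_sufficiently_large_RP_theorem_statement} keeps $-\mathcal R(t)<h_1(t)\le h_n(t)<\mathcal R(t)$ on $[0,t_0]$. Differentiating $E$: on $(-\mathcal R(t),h_1(t))$ and $(h_n(t),\mathcal R(t))$, where $\Psi_{\bar v}$ equals the constants $\bar v_1$ resp.\ $\bar v_{n+1}$, the entropy inequality for $\eta(u|\cdot)$ applies directly and the flux terms at $x=\pm\mathcal R(t)$ have the favorable sign since $r$ dominates the speeds (finite speed of propagation for the relative entropy); on $(h_1(t),h_n(t))$, where $\Psi_{\bar v}$ equals $\bar v$ and is continuous because the intermediate families carry only rarefactions, I use \Cref{corollary_dissipation_RP} with $X\equiv0$, the strict gap $V_n-V_1\ge\mu$ of \Cref{systems_entropy_dissipation_room} forcing $\dot h_1(t)<\dot h_n(t)$ wherever $h_1(t)=h_n(t)$ so its hypotheses hold, while near $t=0$ one first works on $[\varepsilon,t_0]$ and lets $\varepsilon\to0$, legitimately because the only term carrying the rarefaction blow-up is $\bigl(\partial_x\nabla\eta(\bar v)\bigr)f(u|\bar v)\ge0$ by \eqref{RHS_good_sign_rarefaction} and hence only improves the inequality. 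Assembling the interface terms: the $x=h_1(t)$ contributions from the weight-$1$ and weight-$\alpha$ regions are exactly the left side of \eqref{dissipation_negative_claim} with $a_1=\alpha$, hence $\le-c_1|\sigma^1(\bar v_1,\bar v_2)-\dot h_1(t)|^2$; the $x=h_n(t)$ contributions are $\alpha$ times the left side of \eqref{dissipation_negative_claim_n} with $a_n=\alpha/\beta$, hence $\le-\alpha c_n|\sigma^n(\bar v_n,\bar v_{n+1})-\dot h_n(t)|^2$; the interior term is $\ge0$ and is discarded. Integrating over $[0,t_0]$ gives
\[ E(t_0)+c_1\int_0^{t_0}|\sigma^1-\dot h_1|^2\,dt+\alpha c_n\int_0^{t_0}|\sigma^n-\dot h_n|^2\,dt\ \le\ E(0). \]

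\textbf{Step 4: conclusion, and the main obstacle.} Since $h_1(0)=h_n(0)=0$ and $\bar v(\cdot,0)$ is the Riemann datum, \Cref{entropy_relative_L2_control_system} (the states stay in a fixed compact set as $u,\bar v$ are bounded) gives $E(0)\le c^{**}\max(1,\alpha,\beta)\int_{-R-rt_0}^{R+rt_0}|u^0-\bar v(\cdot,0)|^2\,dx$ and $E(t_0)\ge c^{*}\min(1,\alpha,\beta)\int_{-R}^{R}|u(\cdot,t_0)-\Psi_{\bar v}(\cdot,t_0)|^2\,dx$; dividing yields \eqref{RP_lax_strong_stability_theorem_result}, and dropping $E(t_0)\ge0$ yields \eqref{control_shifts_two_shifts204}, with $\mu_1=c^{**}\max(1,\alpha,\beta)/(c^{*}\min(1,\alpha,\beta))$ and $\mu_2=c^{**}\max(1,\alpha,\beta)/\min(c_1,\alpha c_n)$; tracking $\alpha,\beta,c_1,c_n,c^{*},c^{**}$ through \Cref{systems_entropy_dissipation_room}, \Cref{a_cond_lemma_itself} and \Cref{entropy_relative_L2_control_system} shows $\mu_1,\mu_2$ depend only on $\|u\|_{L^\infty}$, $|\bar v_i|$, $|\bar v_i-\bar v_{i+1}|$, on second-derivative bounds for $\eta$ on the range of $u,\bar v$, and, for $\mu_1$, on $\sup\|\nabla\lambda_i\|$ and the gap $\lambda_{i+1}(\bar v_{i+1})-\lambda_i(\bar v_i)$ entering Step~1. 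I expect the main obstacle to be Step~1, namely controlling how far the generalized-characteristic shifts can travel and ruling out collisions with a rarefaction fan: this is the one place where the strong form of Lax's E-condition \eqref{lax_strong} is genuinely used, and it is what makes the pseudo-distance well defined. The companion difficulty is the $\varepsilon\to0$ limit past the $t=0$ derivative blow-up of the rarefactions, which works precisely because $\bar v$ is never translated ($X\equiv0$) and because of the sign condition \eqref{RHS_good_sign_rarefaction}.
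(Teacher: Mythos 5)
Your proposal is correct and follows essentially the same route as the paper: the same use of \Cref{systems_entropy_dissipation_room} together with \Cref{a_cond_lemma_itself}, the Lipschitz bound on $\lambda_1$, and the Lax gap \eqref{lax_strong} to keep the shifts out of the rarefaction fans; the same weighted pseudo-distance (your $\alpha,\beta$ are the paper's $a_1$ and $a_1/a_n$); and the same three applications of the localized dissipation estimate with $X\equiv 0$, closed by \eqref{RHS_good_sign_rarefaction} and \Cref{entropy_relative_L2_control_system}. The only cosmetic difference is that you invoke \Cref{corollary_dissipation_RP} on the middle region, where the paper applies \Cref{local_entropy_dissipation_rate_systems_RP} directly (its Case 2 already covers shifts that coincide only at $t=0$).
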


\begin{proof}

We assume that in $\bar{v}$ there is both a 1-shock and an n-shock (in addition to a rarefaction fan). The three cases when there is only a 1-shock, only an n-shock, or no shocks at all are all very similar and are left to the reader. 

We first focus on the shock connecting $\bar{v}_1$ to $\bar{v}_2$. Label the shock speed $\sigma^1(\bar{v}_1,\bar{v}_{2})$. 

Let $j$ be such that the leftward most rarefaction wave in $\bar{v}$ is a j-rarefaction wave.

Then the j-rarefaction wave must be joining $\bar{v}_j=\bar{v}_2$ and $\bar{v}_{j+1}$. We can write this j-rarefaction wave as,
\begin{align}
\begin{cases}\label{2_rarefaction_joining_RP}
\bar{v}_j & \text{if } \frac{x}{t}<\lambda_j(\bar{v}_j)\\ 
V_j(\frac{x}{t})& \text{if } \lambda_j(\bar{v}_j)<\frac{x}{t}<\lambda_j(\bar{v}_{j+1})\\ 
\bar{v}_{j+1} & \text{if }\lambda_j(\bar{v}_{j+1})<\frac{x}{t},
\end{cases}
\end{align}
for a function $V_j:\mathbb{R}\to\mathbb{R}^n$. 

From \eqref{lax_strong}, we get that $\lambda_1(\bar{v}_1)<\lambda_2(\bar{v}_2)$. Then, from strict hyperbolicity of the system \eqref{system}, we get $\lambda_1(\bar{v}_1)<\lambda_2(\bar{v}_2)\leq\lambda_j(\bar{v}_2)$ (recall it is possible $j=2$).   Define 
\begin{equation}
\begin{aligned}\label{defs_Riemann_problem}
L&\coloneqq \sup_{\abs{u}\leq B}\norm{\nabla \lambda_1(u)},\\
\epsilon_0&\coloneqq \frac{\lambda_2(\bar{v}_2)-\lambda_1(\bar{v}_1)}{2L},
\end{aligned}
\end{equation}
where $B$ verifies $\norm{u}_{L^\infty},\norm{\bar{v}}_{L^\infty}\leq B$. Note $L$ exists by the remarks after the hypotheses $(\mathcal{H})$ and $(\mathcal{H})^*$.

From \Cref{systems_entropy_dissipation_room}, we can find a positive $a_1$ such that 
\begin{align}\label{condition_on_epsilon_0_riemann_problem}
a_1<\frac{\epsilon_0^2}{C},
\end{align}
where $C$ is the constant from \Cref{a_cond_lemma_itself}, and 
from \eqref{dissipation_negative_claim}, we have a shift function $h_1:[0,T)\to\mathbb{R}$ such that $h_1(0)=0$ and
\begin{equation}
\begin{aligned}\label{dissipation_negative_claim_Riemann_problem}
a_1\big(q(u_+^1;\bar{v}_2)&-\dot{h}_1(t)\eta(u_+^1|\bar{v}_2)\big)-q(u_-^1;\bar{v}_1)+\dot{h}_1(t)\eta(u_-^1|\bar{v}_1) \leq \\
& -c_1 \abs{\sigma^1(\bar{v}_1,\bar{v}_{2})-\dot{h}_1(t)}^2,
\end{aligned}
\end{equation}
for all $t\in[0,T)$ and where $u_{\pm}^1\coloneqq u(u(h_1(t)\pm,t)$. 

Note that from \Cref{a_cond_lemma_itself} and \eqref{condition_on_epsilon_0_riemann_problem} we know that $\{u | \eta(u|\bar{v}_1)\leq a_1\eta(u|\bar{v}_2)\} \subset B_{\epsilon_0}(\bar{v}_1)$. 

Furthermore, for each $t\in[0,T)$ either $\dot{h}_1(t)<\inf\lambda_1$ or $(u_+^1,u_-^1,\dot{h}_1)$ is a 1-shock with $u_-^1\in \{u | \eta(u|\bar{v}_1)\leq a_1\eta(u|\bar{v}_2)\}$ (possibly $u^1_+=u^1_-$ and $\dot{h}_1=\lambda_1(u^1_\pm)$).  From   \eqref{defs_Riemann_problem} and \eqref{condition_on_epsilon_0_riemann_problem}, we have that 
\begin{align}\label{short_computation_using_lambda_Lip_Riemann_problem}
\lambda_1(u_-^1)\leq \lambda_1(\bar{v}_1)+\frac{\lambda_2(\bar{v}_2)-\lambda_1(\bar{v}_1)}{2}<\lambda_2(\bar{v}_2).
\end{align}

Then, due to the hypothesis $(\mathcal{H}1)$, $\dot{h}_1(t)\leq \lambda_1(u_-^1)$. Then because of \eqref{short_computation_using_lambda_Lip_Riemann_problem},  
\begin{align}\label{good_ordering_RP_part1}
\dot{h}_1(t)\leq \lambda_1(u_-^1)< \lambda_2(\bar{v}_2)
\end{align} 
for all $t\in[0,T)$. Finally, recalling that $\lambda_2(\bar{v}_2)\leq \lambda_j(\bar{v}_2)$ due to strict hyperbolicity of \eqref{system}, we get
\begin{align}\label{good_ordering_RP}
\dot{h}_1(t)\leq \lambda_1(u_-^1)< \lambda_2(\bar{v}_2)\leq \lambda_j(\bar{v}_2)
\end{align} 
for all $t\in[0,T)$.

We now consider the n-shock connecting $\bar{v}_n$ to $\bar{v}_{n+1}$. 

Let $k$ be such that the rightward most rarefaction wave in the solution $\bar{v}$ is a k-rarefaction wave. 

Note first that the $k$-rarefaction wave joins $\bar{v}_{k}$ and $\bar{v}_{k+1}=\bar{v}_n$. Note $k\leq n-1$. We can write this k-rarefaction wave as,
\begin{align}
\begin{cases}\label{n_1_rarefaction_joining_RP}
\bar{v}_{k} & \text{if } \frac{x}{t}<\lambda_{k}(\bar{v}_{k})\\ 
V_{k}(\frac{x}{t})& \text{if } \lambda_{k}(\bar{v}_{k})<\frac{x}{t}<\lambda_{k}(\bar{v}_n)\\ 
\bar{v}_n & \text{if }\lambda_{k}(\bar{v}_n)<\frac{x}{t},
\end{cases}
\end{align}
for a function $V_{k}:\mathbb{R}\to\mathbb{R}^n$.

Following the same argument as above for the 1-shock, we get from \eqref{dissipation_negative_claim_n}, a function $h_n:[0,T)\to\mathbb{R}$ such that $h_n(0)=0$ and
\begin{equation}
\begin{aligned}\label{dissipation_negative_claim_Riemann_problem_n_shock}
\frac{1}{a_n}\big(q(u_+^n;\bar{v}_{n+1})&-\dot{h}_n(t)\eta(u_+^n|\bar{v}_{n+1})\big)-q(u_-^n;\bar{v}_n)+\dot{h}_n(t)\eta(u_-^n|\bar{v}_n) \leq \\
& -c_n \abs{\sigma^n(\bar{v}_n,\bar{v}_{n+1})-\dot{h}_n(t)}^2,
\end{aligned}
\end{equation}
for all $t\in[0,T)$ and where $u_{\pm}^n\coloneqq u(u(h_n(t)\pm,t)$ and $0<a_n<1$ is a constant.  For each $t\in[0,T)$ either $\dot{h}_n(t)>\sup\lambda_n$ or $(u^n_+,u^n_-,\dot{h}_n)$ is an n-shock with $u^n_+\in \{u | \eta(u|\bar{v}_{n+1})\leq a_n\eta(u|\bar{v}_{n})\}$ (possibly $u^n_+=u^n_-$ and $\dot{h}_n=\lambda_n(u^n_\pm)$).

We get (as an analogue of \eqref{good_ordering_RP}),
\begin{align}\label{good_ordering_RP_n_shock}
\dot{h}_n(t)\geq \lambda_{n}(u_+^n)> \lambda_{n-1}(\bar{v}_n)\geq \lambda_{k}(\bar{v}_n)
\end{align} 
for all $t\in[0,T)$.

Define $\Psi_{\bar{v}}:\mathbb{R}\times[0,T)\to\mathbb{R}^n$,
\begin{align}
\Psi_{\bar{v}}(x,t)\coloneqq
\begin{cases}\label{Psi_def_RP}
\bar{v}_1& \text{if } x<h_1(t)\\
\bar{v}_2& \text{if } h_1(t)<x<\lambda_j(\bar{v}_2)t\\
\bar{v}(x,t)& \text{if } \lambda_j(\bar{v}_2)t<x<\lambda_{k}(\bar{v}_n)t\\
\bar{v}_n& \text{if } \lambda_{k}(\bar{v}_n)t<x<h_n(t)\\
\bar{v}_{n+1}& \text{if } h_n(t)<x.
\end{cases}
\end{align}

Note that $h_1$ and $h_2$ satisfy \eqref{h_verify_1} and \eqref{h_verify_2}, respectively, and $\Psi_{\bar{v}}$ is well-defined, due to \eqref{good_ordering_RP}, \eqref{good_ordering_RP_n_shock}, and the fundamental theorem of calculus for $W^{1,1}_{\text{loc}}$ functions.

Choose 
\begin{align}\label{R_sufficiently_large_RP}
R>\max\{\mbox{Lip}[h_1],\mbox{Lip}[h_n]\}t_0.
\end{align}

Define
\begin{equation}
\begin{aligned}\label{h_defs_RP_1}
h_{\text{left}}(t)\coloneqq -R +r(t-t_0)\\
h_{\text{right}}(t)\coloneqq R -r(t-t_0),
\end{aligned}
\end{equation}
where $r>0$ satisfies
\begin{align}\label{r_def_RP1}
\abs{q(a;b)}\leq r\eta(a|b),
\end{align}
for $a,b$ within the range of $u$ and $\bar{v}$. Note that $r>0$ exists due to $\eta(a|b)$ and $q(a;b)$ both being locally quadratic in $a-b$ and $\eta$ being strictly convex.

Then, we use \Cref{local_entropy_dissipation_rate_systems_RP} three times with $X\equiv0$:

We use \Cref{local_entropy_dissipation_rate_systems_RP} once with $h_{\text{left}}$ and $h_1$ and with the constant function
\begin{align}
\bar{v}_1
\end{align}
playing the role of the function $\bar{u}$ in  \Cref{local_entropy_dissipation_rate_systems_RP}. Note that $h_1(t)-h_{\text{left}}(t)>0$ for all $t$ due to \eqref{R_sufficiently_large_RP}.

We use \Cref{local_entropy_dissipation_rate_systems_RP} again with $h_1$ and $h_n$ and with
\begin{align}
\begin{cases}\label{function_for_second_use_of_local_entropy_dissipation_rate_systems_RP}
\bar{v}_2& \text{if } x<\lambda_2(\bar{v}_2)t\\
\bar{v}(x,t)& \text{if } \lambda_2(\bar{v}_2)t<x<\lambda_{n-1}(\bar{v}_n)t\\
\bar{v}_n& \text{if } \lambda_{n-1}(\bar{v}_n)t<x
\end{cases}
\end{align}
playing the role of $\bar{u}$. 

Note that $h_1(t)-h_n(t)>0$ for all $t>0$, because of \eqref{good_ordering_RP} and \eqref{good_ordering_RP_n_shock}. Note also that the fact that the solution $\bar{v}$ to the Riemann problem \eqref{Riemann_problem} exists means that $\lambda_2(\bar{v}_2)<\lambda_{n-1}(\bar{v}_n)$. Recall also the fundamental theorem of calculus for $W^{1,1}_{\text{loc}}$ functions.

Further, remark that \eqref{function_for_second_use_of_local_entropy_dissipation_rate_systems_RP} is Lipschitz continuous on $\mathbb{R}\times(0,\infty)$, due to the form of the rarefaction waves  \eqref{2_rarefaction_joining_RP} and\eqref{n_1_rarefaction_joining_RP}.

Finally, we use \Cref{local_entropy_dissipation_rate_systems_RP} a third time, with $h_n$ and $h_{\text{right}}$. Note that $h_{\text{right}}(t)-h_n(t)>0$ for all $t$ due to \eqref{R_sufficiently_large_RP}. The constant function
\begin{align}
\bar{v}_{n+1}
\end{align}
plays the role of $\bar{u}$.

We now take a linear combination of the three applications of \Cref{local_entropy_dissipation_rate_systems_RP}. Recall the space derivative of constant states is zero, and otherwise we have \eqref{RHS_good_sign_rarefaction}. This yields,

\begin{equation}
\begin{aligned}\label{local_compatible_dissipation_calc_RP_1}
&\int\limits_{0}^{t_0} \bigg[q(u(h_{\text{left}}(t)+,t);\bar{v}_1)-\dot{h}_{\text{left}}(t)\eta(u(h_{\text{left}}(t)+,t)|\bar{v}_1)
\\
&\hspace{.25in}-\frac{a_1}{a_n}\big(q(u(h_{\text{right}}(t)-,t);\bar{v}_{n+1})-\dot{h}_{\text{right}}(t)\eta(u(h_{\text{right}}(t)-,t)|\bar{v}_{n+1})\big)
\\
&\hspace{.25in}+
\frac{a_1}{a_n}\big(q(u_+^n;\bar{v}_{n+1})-\dot{h}_n(t)\eta(u_+^n|\bar{v}_{n+1})\big)-a_1\big(q(u_-^n;\bar{v}_n)+\dot{h}_n(t)\eta(u_-^n|\bar{v}_n)\big)
\\
&\hspace{.25in}+
a_1\big(q(u_+^1;\bar{v}_2)-\dot{h}_1(t)\eta(u_+^1|\bar{v}_2)\big)-q(u_-^1;\bar{v}_1)+\dot{h}_1(t)\eta(u_-^1|\bar{v}_1) 
\bigg]\,dt
\\
&\hspace{.25in}\geq
\Bigg[\int\limits_{h_{\text{left}}(t_0)}^{h_1(t_0)}\eta(u(x,t_0)|\Psi_{\bar{v}}(x,t_0))\,dx+a_1\int\limits_{h_1(t_0)}^{h_n(t_0)}\eta(u(x,t_0)|\Psi_{\bar{v}}(x,t_0))\,dx\\
&\hspace{1in}+\frac{a_1}{a_n}\int\limits_{h_n(t_0)}^{h_{\text{right}}(t_0)}\eta(u(x,t_0)|\Psi_{\bar{v}}(x,t_0))\,dx\Bigg]
\\
&\hspace{.25in}-\Bigg[\int\limits_{h_{\text{left}}(0)}^{h_1(0)}\eta(u^0(x)|\Psi_{\bar{v}}(x,0))\,dx+a_1\int\limits_{h_1(0)}^{h_n(0)}\eta(u^0(x)|\Psi_{\bar{v}}(x,0))\,dx\\
&\hspace{1in}+\frac{a_1}{a_n}\int\limits_{h_n(0)}^{h_{\text{right}}(0)}\eta(u^0(x)|\Psi_{\bar{v}}(x,0))\,dx\Bigg].
\end{aligned}
\end{equation}

Recall \eqref{dissipation_negative_claim_Riemann_problem}, \eqref{dissipation_negative_claim_Riemann_problem_n_shock}, \eqref{h_defs_RP_1} and \eqref{r_def_RP1}. In particular, note that $\dot{h}_{\text{left}}=r$ and $\dot{h}_{\text{right}}=-r$. Then, we get from \eqref{local_compatible_dissipation_calc_RP_1},

\begin{equation}
\begin{aligned}\label{almost_final_RP_lax_strong_two_shocks}
&-\int\limits_0^{t_0} \bigg[c_n \abs{\sigma^n(\bar{v}_n,\bar{v}_{n+1})-\dot{h}_n(t)}^2+c_1 \abs{\sigma^1(\bar{v}_1,\bar{v}_{2})-\dot{h}_1(t)}^2\bigg]\,dt\\
&\hspace{.5in}+
\Bigg[\int\limits_{-R-rt_0}^{h_1(0)}\eta(u^0(x)|\Psi_{\bar{v}}(x,0))\,dx+\frac{a_1}{a_n}\int\limits_{h_n(0)}^{R+rt_0}\eta(u^0(x)|\Psi_{\bar{v}}(x,0))\,dx\Bigg]
\\
&\hspace{.5in}\geq\Bigg[\int\limits_{-R}^{h_1(t_0)}\eta(u(x,t_0)|\Psi_{\bar{v}}(x,t_0))\,dx+a_1\int\limits_{h_1(t_0)}^{h_n(t_0)}\eta(u(x,t_0)|\Psi_{\bar{v}}(x,t_0))\,dx\\
&\hspace{1in}+\frac{a_1}{a_n}\int\limits_{h_n(t_0)}^{R}\eta(u(x,t_0)|\Psi_{\bar{v}}(x,t_0))\,dx\Bigg].
\end{aligned}
\end{equation}

Note that we have also used that 
\begin{align}
a_1\int\limits_{h_1(0)}^{h_n(0)}\eta(u^0(x)|\Psi_{\bar{v}}(x,0))\,dx=0
\end{align}
due to $h_1(0)=h_n(0)=0$.

Note that for $i=1,n$ the constant $c_i$ depends on $\norm{u}_{L^\infty}$, $a_i$, $\abs{\bar{v}_i-\bar{v}_{i+1}}$ (by \eqref{shock_strength_comparable_s_systems1}), and $\abs{\bar{v}_i}$, $\abs{\bar{v}_{i+1}}$.
 
Recall \Cref{entropy_relative_L2_control_system}, which says that due to the strict convexity of $\eta$, there exist  constants $c^*,c^{**}>0$ such that
\begin{align}\label{eta_locally_quadratic_RP_2_case_1}
c^*\abs{a-b}^2\leq\eta(a|b)\leq c^{**}\abs{a-b}^2,
\end{align}
for all $a,b$ in a fixed compact set. Note that $c^*,c^{**}$ depend on bounds on the second derivative of $\eta$ on the range of $a$ and $b$. 

Recall also that $a_1$ depends on $\norm{u}_{L^\infty}$, $\abs{\bar{v}_1},\abs{\bar{v}_2}$, and $\abs{\bar{v}_1-\bar{v}_2}$. Further, $a_n$ depends on $\norm{u}_{L^\infty}$, $\abs{\bar{v}_n},\abs{\bar{v}_{n+1}}$, and $\abs{\bar{v}_n-\bar{v}_{n+1}}$. Recall the relation \eqref{shock_strength_comparable_s_systems1}.

Thus, from \eqref{almost_final_RP_lax_strong_two_shocks} we can write,
\begin{align}\label{almost_final_RP_lax_strong_two_shocks_final}
\int\limits_{-R}^{R}\abs{u(x,t_0)-\Psi_{\bar{v}}(x,t_0)}^2\,dx\leq\mu_1\int\limits_{-R-rt_0}^{R+rt_0}\abs{u^0(x)-\Psi_{\bar{v}}(x,0)}^2\,dx,
\end{align}
where the constant $\mu_1>0$ depends on $\norm{u}_{L^\infty}$, $\abs{\bar{v}_i},\abs{\bar{v}_{i+1}}$, and $\abs{\bar{v}_i-\bar{v}_{i+1}}$ for both $i=1$ and $i=n$. Further, $\mu_1$ depends on bounds on the second derivative of $\eta$ on the range of $u$ and $\bar{v}$. Recall also that due to \eqref{condition_on_epsilon_0_riemann_problem}, for both $i=1$ and $i=n$, $\mu_1$ depends on $\sup\norm{\nabla \lambda_i}$ (where the supremum runs over the range of $u$ and $\bar{v}$), and  $(\lambda_{i+1}(\bar{v}_{i+1})-\lambda_i(\bar{v}_i))$.

This gives us \eqref{RP_lax_strong_stability_theorem_result}. Note $\Psi_{\bar{v}}(x,0)=\bar{v}(x,t)$.
 
We also get from \eqref{almost_final_RP_lax_strong_two_shocks},
\begin{align}
\int\limits_0^{t_0} \bigg[\abs{\sigma^n(\bar{v}_n,\bar{v}_{n+1})-\dot{h}_n(t)}^2+\abs{\sigma^1(\bar{v}_1,\bar{v}_{2})-\dot{h}_1(t)}^2\bigg]\,dt\leq\mu_2\int\limits_{-R-rt_0}^{R+rt_0}\abs{u^0(x)-\Psi_{\bar{v}}(x,0)}^2\,dx,
\end{align}
where the constant $\mu_2>0$ depends on $\norm{u}_{L^\infty}$, $\abs{\bar{v}_i},\abs{\bar{v}_{i+1}}$, and $\abs{\bar{v}_i-\bar{v}_{i+1}}$ for both $i=1$ and $i=n$. Moreover, $\mu_2$ depends on bounds on the second derivative of $\eta$ on the range of $u$ and $\bar{v}$.  Note again $\Psi_{\bar{v}}(x,0)=\bar{v}(x,t)$.
%

\end{proof}

\subsection{\Cref{RP_theorem_3}: $L^2$ Stability for the Riemann Problem with  Extremal Shocks but No Rarefactions}

When $\bar{v}$ contains no rarefactions, then we do not require \eqref{lax_strong} or \eqref{RHS_good_sign_rarefaction}. We get the following stability result,
\begin{theorem}[$L^2$ Stability for the Riemann Problem with  Extremal Shocks but No Rarefactions]
\label{RP_theorem_3}
Fix $T>0$. Assume $u,\bar{v}\in L^\infty(\mathbb{R}\times[0,T))$ are solutions to the system \eqref{system}. Assume that $u$ and $\bar{v}$ are entropic for the entropy $\eta$. Further, assume that $u$ has strong traces (\Cref{strong_trace_definition}). 

Assume also that $\bar{v}$ is a solution to the Riemann problem \eqref{Riemann_problem} and that $\bar{v}$ has the form \eqref{standard_form_solution_RP}. If $\bar{v}$ contains a 1-shock, assume the hypotheses $(\mathcal{H})$ hold. Likewise, if $\bar{v}$ contains an n-shock, assume the hypotheses $(\mathcal{H})^*$ hold. 

Assume $\bar{v}$ does not contain any rarefactions, and if $\bar{v}$ contains any shocks, they are either a 1-shock or an n-shock.

Assume the system \eqref{system} has at least two conserved quantities ($n\geq2$).

Then there exists  $\Psi_{\bar{v}}$ with Property $(\mathcal{D})$ and verifying the following stability estimate:
\begin{align}\label{RP_no_rarefaction_no_lax_strong_stability_theorem_result}
\int\limits_{-R}^{R}\abs{u(x,t_0)-\Psi_{\bar{v}}(x,t_0)}^2\,dx\leq\mu_1\int\limits_{-R-rt_0}^{R+rt_0}\abs{u^0(x)-\bar{v}(x,0)}^2\,dx,
\end{align}
for all $t_0,R>0$ verifying $t_0\in(0,R)$ and
\begin{align}\label{R_sufficiently_large_RP_theorem_statement}
R>\max_{i}\{\mbox{Lip}[h_i]\}t_0,
\end{align}
where the max runs over the i-shock families contained in $\bar{v}$ (1-shocks and/or n-shocks) and the $h_i$ are in the context of Property $(\mathcal{D})$.

Moreover,  there is $L^2$-type control on the shift functions $h_i$:
\begin{align}\label{control_shifts_two_shifts204_no_rarefactions}
\int\limits_0^{t_0} \sum_{i}\abs{\sigma^i(\bar{v}_i,\bar{v}_{i+1})-\dot{h}_i(t)}^2\,dt\leq \mu_1\int\limits_{-R-rt_0}^{R+rt_0}\abs{u^0(x)-\bar{v}(x,0)}^2\,dx,
\end{align}
where the sum runs over the i-shock families contained in $\bar{v}$ (1-shocks and/or n-shocks).

If $\bar{v}$ contains an i-shock, the constant $\mu_1>0$ depends on $\norm{u}_{L^\infty}$, $\abs{\bar{v}_i},\abs{\bar{v}_{i+1}}$, and $\abs{\bar{v}_i-\bar{v}_{i+1}}$. Further, $\mu_1$ depends on bounds on the second derivative of $\eta$ on the range of $u$ and $\bar{v}$.
\end{theorem}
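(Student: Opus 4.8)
The plan is to follow the architecture of \Cref{RP_theorem_1}, with \Cref{local_entropy_dissipation_rate_systems_RP} replaced by \Cref{corollary_dissipation_RP} so that the two extremal shifts are allowed to touch. As in that proof, the cases where $\bar v$ carries only one shock, or no shock (in which case $\bar v$ is constant and the estimate is the classical weak--strong estimate), are simpler, so I concentrate on $\bar v$ carrying both a $1$-shock $(\bar v_1,\bar v_2,\sigma^1(\bar v_1,\bar v_2))$ and an $n$-shock $(\bar v_n,\bar v_{n+1},\sigma^n(\bar v_n,\bar v_{n+1}))$ and no rarefactions; by \eqref{standard_form_solution_RP} this forces $\bar v_2=\bar v_3=\dots=\bar v_n$, a single intermediate state. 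First I invoke \Cref{systems_entropy_dissipation_room} (its hypothesis \eqref{gap_local_case} holds since $\bar v_1\neq\bar v_2$ and $\bar v_n\neq\bar v_{n+1}$): it produces $a_1\in(0,a_{1,*})$, $a_n\in(a_{n,*},\infty)$ and Lipschitz shifts $h_1,h_n\colon[0,T)\to\mathbb R$ with $h_1(0)=h_n(0)=0$ satisfying the pointwise dissipation bounds \eqref{dissipation_negative_claim} and \eqref{dissipation_negative_claim_n}, the dichotomy (super-characteristic speed, or a genuine extremal shock whose relevant endpoint state lies in $R_{a_1}$, resp.\ the analogous set for the $n$-shock), and, crucially, the ordering \eqref{ordering_h_1_h_n_RP}, $h_1(t)\le h_n(t)$. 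I then take $\Psi_{\bar v}$ to be the profile \eqref{Psi_def_RP_one_shock_n_shock_theorem}, which is of the form demanded by Property $(\mathcal D)$, with \eqref{h_verify_3} being exactly the ordering $h_1\le h_n$.

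The heart of the matter, and the step I expect to be the main obstacle, is verifying that $(h_1,h_n)$ satisfies the hypotheses of \Cref{corollary_dissipation_RP}: \eqref{ordering_shifts_required} is \eqref{ordering_h_1_h_n_RP}, and one must establish \eqref{derivs_ordered_RP}, namely $\dot h_1(t)<\dot h_n(t)$ at every $t$ of common differentiability where $h_1(t)=h_n(t)=:x^\ast$. I would argue via the dichotomy of \Cref{systems_entropy_dissipation_room} together with strict hyperbolicity. If $\dot h_1(t)<\inf\lambda_1$ or $\dot h_n(t)>\sup\lambda_n$, the inequality is immediate from the elementary separation $\inf\lambda_n>\inf\lambda_1$ and $\sup\lambda_n>\sup\lambda_1$, consequences of $\lambda_n(v)-\lambda_1(v)\ge\theta>0$ on the compact range of $u$. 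Otherwise both shifts are in ``shock mode'': writing $u_\pm:=u(x^\ast\pm,t)$ for the common traces, $(u_+,u_-,\dot h_1)$ is a $1$-shock with $u_-$ near $\bar v_1$ and $(u_+,u_-,\dot h_n)$ an $n$-shock with $u_+$ near $\bar v_{n+1}$; if $u_+=u_-$ then $\dot h_1=\lambda_1(u_\pm)<\lambda_n(u_\pm)=\dot h_n$, while if $u_+\neq u_-$ the Rankine--Hugoniot relation forces $\dot h_1=\dot h_n=:\sigma$, so that one and the same discontinuity would be simultaneously a Liu $1$-shock and a Liu $n$-shock. The hypotheses $(\mathcal H2)$, $(\mathcal H2)^\ast$ and the Liu conditions then force $\lambda_n(u_+)<\sigma<\lambda_1(u_-)$, and since $a_1$ small and $a_n$ large confine $u_-,u_+$ to arbitrarily small balls about $\bar v_1,\bar v_{n+1}$, this is incompatible with the ordering of the extremal characteristic speeds of the wave fan of $\bar v$; hence \eqref{derivs_ordered_RP} holds, and by Step~1 of the proof of \Cref{corollary_dissipation_RP} the coincidence set $\{t:h_1(t)=h_n(t)\}$ is null. (This is precisely the place where some care is needed: without it \Cref{corollary_dissipation_RP} is unavailable, and, as explained in the overview, a persistent coincidence $h_1=h_n$ would leave the two traces astride a single curve forming in general a non-entropic discontinuity.)

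With this in hand I reproduce the energy estimate of \Cref{RP_theorem_1}. Fix $t_0,R>0$ with $t_0\in(0,R)$ and $R>\max\{\mathrm{Lip}[h_1],\mathrm{Lip}[h_n]\}t_0$, set $h_{\mathrm{left}}(t):=-R+r(t-t_0)$ and $h_{\mathrm{right}}(t):=R-r(t-t_0)$ where $r>0$ verifies $|q(a;b)|\le r\,\eta(a|b)$ on the range of $u$ and $\bar v$, and apply \Cref{corollary_dissipation_RP} with $X\equiv 0$ three times: to $(h_{\mathrm{left}},h_1)$ with the constant state $\bar v_1$ in the rôle of $\bar u$, to $(h_1,h_n)$ with the constant state $\bar v_2$, and to $(h_n,h_{\mathrm{right}})$ with the constant state $\bar v_{n+1}$. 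For the first and third, $h_{\mathrm{left}}<h_1$ and $h_n<h_{\mathrm{right}}$ on $[0,t_0]$ by the choice of $R$, so those are really applications of \Cref{local_entropy_dissipation_rate_systems_RP}; the middle one genuinely needs the corollary. Since the three auxiliary profiles are constant in $x$, every ``RHS'' integrand on the right of \eqref{local_compatible_dissipation_calc_RP_cor} vanishes identically, so \eqref{RHS_good_sign_rarefaction} is never used. Taking the linear combination of the three inequalities with weights $1$, $a_1$, $a_1/a_n$, the interface terms at $x=h_1(t)$ assemble into $a_1$ times the left side of \eqref{dissipation_negative_claim} and those at $x=h_n(t)$ into $a_1/a_n$ times (a multiple of) the left side of \eqref{dissipation_negative_claim_n}, while the boundary terms at $h_{\mathrm{left}},h_{\mathrm{right}}$ are nonnegative after using $\dot h_{\mathrm{left}}=r$, $\dot h_{\mathrm{right}}=-r$ and $|q|\le r\eta$. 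Bounding the interface terms by $-c_1 a_1|\sigma^1(\bar v_1,\bar v_2)-\dot h_1|^2$ and $-c_n(a_1/a_n)|\sigma^n(\bar v_n,\bar v_{n+1})-\dot h_n|^2$ via \eqref{dissipation_negative_claim}--\eqref{dissipation_negative_claim_n}, rearranging, and finally converting between the relative entropy and the squared $L^2$-distance through \Cref{entropy_relative_L2_control_system}, one obtains both \eqref{RP_no_rarefaction_no_lax_strong_stability_theorem_result} and \eqref{control_shifts_two_shifts204_no_rarefactions}. The dependence of $\mu_1$ is tracked exactly as in \Cref{RP_theorem_1}; since there are no rarefactions to keep away from, it involves neither $\sup\|\nabla\lambda_i\|$ nor $\lambda_{i+1}(\bar v_{i+1})-\lambda_i(\bar v_i)$, matching the stated dependencies.
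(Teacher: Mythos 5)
Your proposal reproduces the paper's proof of \Cref{RP_theorem_3} almost step for step: the same case split, the same invocation of \Cref{systems_entropy_dissipation_room} to produce $a_1\in(0,a_{1,*})$, $a_n\in(a_{n,*},\infty)$ and the shifts $h_1,h_n$ with \eqref{dissipation_negative_claim}, \eqref{dissipation_negative_claim_n} and the ordering \eqref{ordering_h_1_h_n_RP}; the same profile \eqref{Psi_def_RP_one_shock_n_shock_theorem}; the same three applications of \Cref{local_entropy_dissipation_rate_systems_RP} (outer intervals) and \Cref{corollary_dissipation_RP} (middle interval) with weights $1$, $a_1$, $a_1/a_n$; and the same passage to $L^2$ via \Cref{entropy_relative_L2_control_system}. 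The energy-estimate portion and the tracking of the constants are correct and match the paper.

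The one place where you go beyond the paper --- the explicit verification of hypothesis \eqref{derivs_ordered_RP} of \Cref{corollary_dissipation_RP} --- is also where your argument has a hole. Your case analysis correctly isolates the only dangerous configuration: $h_1(t)=h_n(t)$ with common traces $u_\pm$, $u_+\neq u_-$, and both shifts in ``shock mode,'' so that Rankine--Hugoniot forces $\dot h_1=\dot h_n=\sigma$ and the single discontinuity is simultaneously a Liu $1$-shock and a Liu $n$-shock, whence $\lambda_n(u_+)<\sigma<\lambda_1(u_-)$. But the final step --- that this is ``incompatible with the ordering of the extremal characteristic speeds of the wave fan of $\bar v$'' --- does not follow from anything you have cited. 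The wave fan only gives $\sigma^1(\bar v_1,\bar v_2)<\lambda_1(\bar v_1)$, $\lambda_n(\bar v_{n+1})<\sigma^n(\bar v_n,\bar v_{n+1})$ and $\sigma^1\le\sigma^n$, and strict hyperbolicity compares $\lambda_1$ and $\lambda_n$ only at the \emph{same} state; none of this precludes $\lambda_n(\bar v_{n+1})<\lambda_1(\bar v_1)$, so confining $u_-$ near $\bar v_1$ and $u_+$ near $\bar v_{n+1}$ produces no contradiction (recall also that \Cref{RP_theorem_3} expressly does not assume \eqref{lax_strong}, which is what supplies cross-state speed comparisons in \Cref{RP_theorem_1}). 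To be fair, the paper itself dispatches this point in one sentence, asserting that $(\mathcal H)$ and $(\mathcal H)^*$ make the $1$- and $n$-speeds ``well-separated''; but if you want to claim a proof here you need either an actual derivation that an entropic discontinuity cannot lie on both extremal shock curves in the relevant neighborhoods, or an explicit separation hypothesis. As written, the contradiction is asserted rather than proved, and this is the only genuine gap in the proposal.
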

\begin{proof}
There are three cases to consider:
\begin{itemize}
\item 
$\bar{v}$ contains a 1-shock and an n-shock
\item 
$\bar{v}$ contains a 1-shock but no n-shock or $\bar{v}$ contains an n-shock but no 1-shock
\item 
$\bar{v}$ does not contain any shocks
\end{itemize}

We begin with the first case,

\hfill \break
\emph{Case} $\bar{v}$ contains a 1-shock and an n-shock. 
\hfill \break

From \eqref{dissipation_negative_claim}, we have $a_1\in(0,1)$ and a shift function $h_1:[0,T)\to\mathbb{R}$ such that $h_1(0)=0$ and
\begin{equation}
\begin{aligned}\label{dissipation_negative_claim_Riemann_problem_no_rarefaction}
a_1\big(q(u_+^1;\bar{v}_2)&-\dot{h}_1(t)\eta(u_+^1|\bar{v}_2)\big)-q(u_-^1;\bar{v}_1)+\dot{h}_1(t)\eta(u_-^1|\bar{v}_1) \leq -c_1 \abs{\sigma^1(\bar{v}_1,\bar{v}_2)-\dot{h}_1(t)}^2,
\end{aligned}
\end{equation}
where $u_{\pm}^1\coloneqq u(u(h_1(t)\pm,t)$. 

Similarly, for the n-shock, we get from \eqref{dissipation_negative_claim_n}, the existence of an $a_n\in(0,1)$ and a function $h_n:[0,T)\to\mathbb{R}$ such that $h_n(0)=0$ and
\begin{equation}
\begin{aligned}\label{dissipation_negative_claim_Riemann_problem_n_shock_no_rarefaction}
&\frac{1}{a_n}\big(q(u_+^n;\bar{v}_{n+1})-\dot{h}_n(t)\eta(u_+^n|\bar{v}_{n+1})\big)-q(u_-^n;\bar{v}_n)+\dot{h}_n(t)\eta(u_-^n|\bar{v}_n) \\
&\hspace{1in}\leq -c_n \abs{\sigma^n(\bar{v}_{n},\bar{v}_{n+1})-\dot{h}_n(t)}^2,
\end{aligned}
\end{equation}
where $u_{\pm}^n\coloneqq u(u(h_n(t)\pm,t)$.  Note that by virtue of there not being any rarefactions, $\bar{v}_n=\bar{v}_2$.

Note that from \Cref{systems_entropy_dissipation_room} we know that the constant $a_i$ depends on $\norm{u}_{L^\infty}$, $\abs{\bar{v}_i},\abs{\bar{v}_{i+1}}$, and $\abs{\bar{v}_{i}-\bar{v}_{i+1}}$, for $i=1,n$. For $i=1,n$, the constant $c_i>0$ depends on $\norm{u}_{L^\infty}$, $\abs{\bar{v}_i},\abs{\bar{v}_{i+1}}$, $\abs{\bar{v}_{i}-\bar{v}_{i+1}}$, and $a_i$.

Choose 
\begin{align}\label{R_sufficiently_large_RP_no_rarefaction}
R>\max\{\mbox{Lip}[h_1],\mbox{Lip}[h_n]\}t_0.
\end{align}

Define
\begin{equation}
\begin{aligned}\label{h_defs_RP_1_no_rarefaction}
h_{\text{left}}(t)\coloneqq -R +r(t-t_0)\\
h_{\text{right}}(t)\coloneqq R -r(t-t_0),
\end{aligned}
\end{equation}
where $r>0$ satisfies
\begin{align}\label{r_def_RP1_no_rarefaction}
\abs{q(a;b)}\leq r\eta(a|b),
\end{align}
for $a,b$ within the range of $u$ and $\bar{v}$. Note that $r>0$ exists due to $\eta(a|b)$ and $q(a;b)$ both being locally quadratic in $a-b$ and $\eta$ being strictly convex.

We use \Cref{local_entropy_dissipation_rate_systems_RP} once with $X\equiv0$, $h_{\text{left}}$, and $h_1$ and with the constant function
\begin{align}
\bar{v}_1
\end{align}
playing the role of the function $\bar{u}$ in  \Cref{local_entropy_dissipation_rate_systems_RP}. Note that $h_1(t)-h_{\text{left}}(t)>0$ for all $t$ due to \eqref{R_sufficiently_large_RP_no_rarefaction}.

We also use \Cref{corollary_dissipation_RP} with $h_1$ and $h_n$ and with
\begin{align}
\bar{v_2}
\end{align}
playing the role of $\bar{u}$.  Note that we can apply \Cref{corollary_dissipation_RP} with $h_1$ and $h_n$ because by \Cref{systems_entropy_dissipation_room}, we know that for  each $t\in[0,T)$ either $\dot{h}_1(t)<\inf\lambda_1$ or $(u_+^1,u_-^1,\dot{h}_1)$ is a 1-shock   (including possibly $u^1_+=u^1_-$ and $\dot{h}_1=\lambda_1(u^1_\pm)$). Similarly, for each $t\in[0,T)$ either $\dot{h}_n(t)>\sup\lambda_n$ or $(u^n_+,u^n_-,\dot{h}_n)$ is an n-shock (including possibly $u^n_+=u^n_-$ and $\dot{h}_n=\lambda_n(u^n_\pm)$). By the hypotheses $(\mathcal{H})$ and $(\mathcal{H})^*$, the assumptions necessary to apply  \Cref{corollary_dissipation_RP} with $h_1$ and $h_n$ are satisfied. The hypotheses $(\mathcal{H})$ and $(\mathcal{H})^*$ say that the speeds of 1-shocks and n-shocks and the characteristic speeds of the 1-family and n-family  are well-separated.

Furthermore, by virtue of \Cref{systems_entropy_dissipation_room} we know that $h_1(t)\leq h_n(t)$ for all $t$. In particular, this gives \eqref{h_verify_3}.

Finally, we use \Cref{local_entropy_dissipation_rate_systems_RP} a second time, with $X\equiv0$, $h_n$, and $h_{\text{right}}$. Note that $h_{\text{right}}(t)-h_n(t)>0$ for all $t$ due to \eqref{R_sufficiently_large_RP_no_rarefaction}. The constant function
\begin{align}
\bar{v}_{n+1}
\end{align}
plays the role of $\bar{u}$.

We now take a linear combination of the two applications of \Cref{local_entropy_dissipation_rate_systems_RP} and the one application of \Cref{corollary_dissipation_RP}. Note the space derivative of constant states in $\bar{v}$ is zero. This yields,

\begin{equation}
\begin{aligned}\label{local_compatible_dissipation_calc_RP_1_no_rarefaction}
&\int\limits_{0}^{t_0} \bigg[q(u(h_{\text{left}}(t)+,t);\bar{v}_1)-\dot{h}_{\text{left}}(t)\eta(u(h_{\text{left}}(t)+,t)|\bar{v}_1)
\\
&\hspace{.5in}-\frac{a_1}{a_n}\big(q(u(h_{\text{right}}(t)-,t);\bar{v}_{n+1})-\dot{h}_{\text{right}}(t)\eta(u(h_{\text{right}}(t)-,t)|\bar{v}_{n+1})\big)
\\
&\hspace{.5in}+
\frac{a_1}{a_n}\big(q(u_+^n;\bar{v}_{n+1})-\dot{h}_n(t)\eta(u_+^n|\bar{v}_{n+1})\big)-a_1\big(q(u_-^n;\bar{v}_n)+\dot{h}_n(t)\eta(u_-^n|\bar{v}_n)\big)
\\
&\hspace{.5in}+
a_1\big(q(u_+^1;\bar{v}_2)-\dot{h}_1(t)\eta(u_+^1|\bar{v}_2)\big)-q(u_-^1;\bar{v}_1)+\dot{h}_1(t)\eta(u_-^1|\bar{v}_1) 
\bigg]\,dt
\\
&\hspace{.14in}\geq
\Bigg[\int\limits_{h_{\text{left}}(t_0)}^{h_1(t_0)}\eta(u(x,t_0)|\bar{v}_1)\,dx+a_1\int\limits_{h_1(t_0)}^{h_n(t_0)}\eta(u(x,t_0)|\bar{v}_2)\,dx+\frac{a_1}{a_n}\int\limits_{h_n(t_0)}^{h_{\text{right}}(t_0)}\eta(u(x,t_0)|\bar{v}_{n+1})\,dx\Bigg]
\\
&\hspace{.5in}-\Bigg[\int\limits_{h_{\text{left}}(0)}^{h_1(0)}\eta(u^0(x)|\bar{v}_1)\,dx+a_1\int\limits_{h_1(0)}^{h_n(0)}\eta(u^0(x)|\bar{v}_2)\,dx+\frac{a_1}{a_n}\int\limits_{h_n(0)}^{h_{\text{right}}(0)}\eta(u^0(x)|\bar{v}_{n+1})\,dx\Bigg].
\end{aligned}
\end{equation}

Recall \eqref{dissipation_negative_claim_Riemann_problem_no_rarefaction}, \eqref{dissipation_negative_claim_Riemann_problem_n_shock_no_rarefaction}, \eqref{h_defs_RP_1_no_rarefaction} and \eqref{r_def_RP1_no_rarefaction}. In particular, note that $\dot{h}_{\text{left}}=r$ and $\dot{h}_{\text{right}}=-r$. Then, we get from \eqref{local_compatible_dissipation_calc_RP_1_no_rarefaction},

\begin{equation}
\begin{aligned}\label{almost_final_RP_lax_strong_two_shocks_no_rarefaction}
&-\int\limits_0^{t_0}\Big[c_1 \abs{\sigma^1(\bar{v}_1,\bar{v}_2)-\dot{h}_1(t)}^2+c_n \abs{\sigma^n(\bar{v}_{n},\bar{v}_{n+1})-\dot{h}_n(t)}^2 \Big]\,dt 
\\
&\hspace{.3in}+\Bigg[\int\limits_{-R-rt_0}^{h_1(0)}\eta(u^0(x)|\bar{v}_1)\,dx+\frac{a_1}{a_n}\int\limits_{h_n(0)}^{R+rt_0}\eta(u^0(x)|\bar{v}_{n+1})\,dx\Bigg]
\\
&\hspace{.3in}\geq\Bigg[\int\limits_{-R}^{h_1(t_0)}\eta(u(x,t_0)|\bar{v}_1)\,dx+a_1\int\limits_{h_1(t_0)}^{h_n(t_0)}\eta(u(x,t_0)|\bar{v}_2)\,dx+\frac{a_1}{a_n}\int\limits_{h_n(t_0)}^{R}\eta(u(x,t_0)|\bar{v}_{n+1})\,dx\Bigg].
\end{aligned}
\end{equation}

Note that we have also used that 
\begin{align}
a_1\int\limits_{h_1(0)}^{h_n(0)}\eta(u^0(x)|\bar{v}_2)\,dx=0
\end{align}
due to $h_1(0)=h_n(0)=0$.
 
The reader will recall \Cref{entropy_relative_L2_control_system}: by virtue of the strict convexity of $\eta$, there exist  constants $c^*,c^{**}>0$ such that
\begin{align}\label{eta_locally_quadratic_RP_2_case_1_no_rarefaction}
c^*\abs{a-b}^2\leq\eta(a|b)\leq c^{**}\abs{a-b}^2,
\end{align}
for all $a,b$ in a fixed compact set. Note that $c^*,c^{**}$ depend on bounds on the second derivative of $\eta$ on the range of $a$ and $b$. 

Recall also that $a_1$ depends on $\norm{u}_{L^\infty}$, $\abs{\bar{v}_1},\abs{\bar{v}_2}$, and $\abs{\bar{v}_1-\bar{v}_2}$ (via the relation \eqref{shock_strength_comparable_s_systems1}). Similarly, $a_n$ depends on $\norm{u}_{L^\infty}$, $\abs{\bar{v}_n},\abs{\bar{v}_{n+1}}$, and $\abs{\bar{v}_n-\bar{v}_{n+1}}$ (via the relation \eqref{shock_strength_comparable_s_systems1}). 

Thus, from \eqref{almost_final_RP_lax_strong_two_shocks_no_rarefaction} we can write,
\begin{align}\label{almost_final_RP_lax_strong_two_shocks_final_no_rarefaction}
\int\limits_{-R}^{R}\abs{u(x,t_0)-\Psi_{\bar{v}}(x,t_0)}^2\,dx\leq\mu_1\int\limits_{-R-rt_0}^{R+rt_0}\abs{u^0(x)-\Psi_{\bar{v}}(x,0)}^2\,dx,
\end{align}
for a constant $\mu_1>0$. This gives us \eqref{RP_no_rarefaction_no_lax_strong_stability_theorem_result}. Note $\Psi_{\bar{v}}(x,0)=\bar{v}(x,0)$. 
 
From \eqref{almost_final_RP_lax_strong_two_shocks_no_rarefaction}, we also have that
\begin{align}
\int\limits_0^{t_0}\Bigg[ \abs{\sigma^1(\bar{v}_1,\bar{v}_2)-\dot{h}_1(t)}^2+ \abs{\sigma^n(\bar{v}_{n},\bar{v}_{n+1})-\dot{h}_n(t)}^2 \Bigg]\,dt \leq\mu_1\int\limits_{-R-rt_0}^{R+rt_0}\abs{u^0(x)-\Psi_{\bar{v}}(x,0)}^2\,dx.
\end{align}

This is \eqref{control_shifts_two_shifts204_no_rarefactions}. Note again $\Psi_{\bar{v}}(x,0)=\bar{v}(x,0)$. 
 
Note that the constant $\mu_1>0$ depends on $\norm{u}_{L^\infty}$, $\abs{\bar{v}_i},\abs{\bar{v}_{i+1}}$, and $\abs{\bar{v}_i-\bar{v}_{i+1}}$ for both $i=1$ and $i=n$. Further, $\mu_1$ depends on bounds on the second derivative of $\eta$ on the range of $u$ and $\bar{v}$.

\hfill \break
\emph{Case} $\bar{v}$ contains a 1-shock but no n-shock or $\bar{v}$ contains an n-shock but no 1-shock
\hfill \break

This case is very similar to the above case. In fact, it is simpler because we do not need to use \Cref{corollary_dissipation_RP}. We can simply use \Cref{local_entropy_dissipation_rate_systems_RP}.

\hfill \break
\emph{Case} $\bar{v}$ does not contain any shocks
\hfill \break

In this case, $\bar{v}$ does not contain shocks or rarefactions. Thus $\bar{v}$ is a constant function. Then, \eqref{RP_no_rarefaction_no_lax_strong_stability_theorem_result} follows from the classical weak-strong stability theorem (see \cite[Theorem 5.2.1]{dafermos_big_book}).

\end{proof}

\bibliographystyle{plain}
\bibliography{references}
\end{document}